\definecolor{light-gray1}{gray}{0.90}
\definecolor{light-gray2}{gray}{0.80}
\newcommand{\m}[1]{%
\def\arg{#1}%
\def\one{1}%
\ifx\arg\one%
\mathbbm{#1}%
\else%
\mathbb{\expandafter{#1}}
\fi%
}
\newcommand{\q}[1]{\mathcal{#1}}
\newcommand{\p}{\partial}
\newcommand{\w}{\omega}
\newcommand{\supp}{\operatorname{supp}}
\def\ro{\rho}
\newcommand{\hps}{\mathcal{F}{\psi}}
\newcommand{\hp}{\mathcal{F}{\varphi}}
\newcommand{\ifai}{\int_R^\infty f' r^{2i}\,dr}
\newcommand{\mas}{{\ \ \text{as} \ \ }}
\newcommand{\tdk}{\tilde{k}}
\newcommand{\ke}{\kappa_{\varepsilon}}
\newcommand{\EQ}[1]{\begin{equation}\begin{split} #1 \end{split}\end{equation}}
\newcommand{\ier}{I(\eps r\rho)}
\newcommand{\abs}[1]{\left\lvert{#1}\right\rvert}
\def\R{\mathbb{R}}
\def\eps{\varepsilon}
\def\ol{\overline}
\renewcommand\Re{\mathrm{Re}}
\def\I{\infty}
\def\f{\frac}
\def\nn{\nonumber}
\def\ro{\rho}
\def\fy{\varphi}
\def\pr{\partial}
\def\calF{\mathcal{F}}
\def\Z{\mathbb{Z}}
\def\supp{\mathrm{supp}}
\theoremstyle{plain}
\newtheorem{thm}{Theorem}
\newtheorem{cor}[thm]{Corollary}
\newtheorem{lem}[thm]{Lemma}
\theoremstyle{definition}
\newtheorem{nb}[thm]{Remark} 
\def\blfootnote{\xdef\@thefnmark{}\@footnotetext}
\title{Channels of energy for the linear radial wave equation}
\date{}
\author{Carlos Kenig, Andrew Lawrie,  Baoping Liu, \and Wilhelm Schlag}
\subjclass{35L05}
\keywords{linear wave equation,   exterior cone}
\thanks{Support of the National Science Foundation DMS-1265249 for the first author, and   DMS-1160817 for the fourth author is gratefully acknowledged. The second author is supported by an NSF postdoctoral fellowship.}
\begin{document}

\begin{abstract}
Exterior channel of energy estimates for the radial wave equation were first considered 
in  three dimensions in~\cite{DKM11},  and for the $5$-dimensional case in~\cite{KLS}. 
In this paper we find the general form  of the channel of energy estimate in all odd dimensions for the radial free wave equation. 
This will be used in the companion paper~\cite{KLLS} to establish the soliton resolution for equivariant wave maps in $\R^{3}$ exterior to the ball $B(0,1)$
and in all equivariance classes. 
\end{abstract}

\maketitle

\section{Introduction}

In this paper we consider solutions to the wave equation \begin{equation}\label{eq:lw}
\Box u=0,\quad u(0)=f,\; u_t(0)=g
\end{equation}
where $(f,g)\in(\dot H^1\times L^2)(\R^d)$ are \emph{radial}. Denote by $u(t) = S(t)(f,g)$ the solution to this wave equation \eqref{eq:lw} with initial data $(f,g)$ at time 0.

Our goal is to further elucidate the {\em exterior energy estimates} for
the free radial wave equation which played a crucial role in the nonlinear work of Duyckaerts,   Merle, and the first author~\cite{DKM11,  DKM13, DKM12, DKM14, DKM15}.
To be specific,  
  for $d\ge3$ and odd, at least one of the following two estimates (even in the nonradial setting) holds:
\begin{equation}
\label{DKMext}
\begin{aligned}
\forall\; t\ge0, \quad \int_{|x|\ge t} |\nabla_{t,x} S(t)(f,g)(x)|^2\, dx &\ge \frac12 \int_{\R^d} (|\nabla f(x)|^2 + |g(x)|^2)\, dx \\
\forall\; t\le0, \quad
\int_{|x|\ge -t} |\nabla_{t,x} S(t)(f,g)(x)|^2 \, dx &\ge \frac12 \int_{\R^d} (|\nabla f(x)|^2 + |g(x)|^2)\, dx, \end{aligned}\end{equation}
where
\[ |\nabla_{t,x} S(t)(f,g)|^2 = |\nabla u(t)|^2 + |\partial_t u(t)|^2 \]
 is the linear energy density (see \cite[Proposition~2.7]{DKM12}).  No result of this type was established there for even dimensions,
and the method of proof used in odd dimensions does not apply in even dimensions.

In~\cite{CKS} it is shown that in fact, \eqref{DKMext} {\em fails} in even dimensions. However,
one {\em does have a favorable bound} in even dimensions for either of the radial data $(f,0)$ or $(0,g)$, depending on the parity of the dimension modulo~$4$.
Without going into any details, we note that this parity entered into the proof through a phase-shift in the asymptotic expansions of the Bessel functions.
The latter appears naturally through the radial Fourier transform, which allows one to derive the following formula for the asymptotic exterior energy.

To state it, we introduce the Hankel  transform $H$ and the Hilbert transform $\q H$ on the half-line $(0,\infty)$:
\begin{equation} \nn 
(H\varphi)(\rho):=\int_0^\infty \frac{\varphi(\sigma)}{\rho+\sigma}\, d\sigma, \quad \text{and} \quad (\q H\varphi)(\rho):=\int_0^\infty \frac{\varphi(\sigma)}{\rho-\sigma}\, d\sigma
\end{equation}
Then the formula from~\cite{CKS} reads as follows:
\begin{multline} \label{CKSext}
\lim_{t\to\pm\infty} C(d)\int_{|x|\ge |t|}  | \nabla_{t,x} S(t)(f,g)|^2\,dx  =
\frac{\pi}{2} \int  (\ro^2|\hat{f}(\rho)|^2+|\hat{g}(\rho)|^2)\ro^{d-1}\, d\ro \\
 +\frac{(-1)^{\frac{d}{2}} }{2} \left( \langle H(\rho^{\frac{d+1}{2}} \hat f), \rho^{\frac{d+1}{2}} \hat f \rangle - \langle H( \rho^{\frac{d-1}{2}} \hat g), \rho^{\frac{d-1}{2}} \hat g \rangle \right) \pm \Re \langle \rho^{\frac{d+1}{2}} \hat f, \q H( \rho^{\frac{d-1}{2}} \hat g) \rangle.
\end{multline}
We remark that the expression $\int_{|x|\ge |t|}  | \nabla_{t,x} S(t)(f,g)|^2\,dx$ is monotone decreasing as $t$ increases due to the local energy identity.
Hence it is sufficient to bound the limit $t\to\pm\I$ from below.

We now select the direction of time such that the final term in~\eqref{CKSext} becomes non-negative. The Hankel transform is the square of the Laplace transform, and therefore
a positive operator. This, together with the known norm of the Hankel transform, immediately imply the statement about the failure (as well as about the limited validity)
of the desired exterior energy estimate depending on $d\!\mod 4$.  For  nonlinear applications of the restricted-data $(f,0)$ exterior energy estimate in dimension $d=4$, see~\cite{CKLS1,CKLS2, CKLS3}. 

This paper is concerned with a different type of exterior energy estimate, namely one over channels. 
To motivate it, we recall the estimate proved in~\cite{DKM11, DKM14}  in $d=3$:  for~$0<R<R_1$, either for all $t>0$ or for all $t<0$ one has
\begin{align} \label{radial-3d}
\int_{ |t|+R \leq  r \leq |t|+R_1 }  \big[ (\pr_r (r  u(t,r)))^2 + (\pr_t (r u(t,r)))^2 \big] \, dr 
\ge &  \frac12 \int_{ R \leq r\leq  R_1 }  \big[ (\pr_r (r  f(r)))^2 + (r  g(r))^2 \big] \, dr. 
\end{align}
The terminology of  ``channels"  is derived from this estimate. 

By taking $R_1=+\infty$ and using the fact $ (\pr_r (r  u(t,r)))^2 = (\partial_r u(t,r))^2 r^2 + \partial_r (u^2(t,r) r)$
we   rewrite (\ref{radial-3d}) as 
\begin{align}\label{radial-3d-nolimit}
\int_{r\geq R+\abs{t}} (u_r^2(t, r)+ u_t^2(t, r)) \, r^2 \, dr  
  - (R+\abs{t})u^2(t, R+ \abs{t})
  \geq&\quad  \frac12 [\int_{r\geq R} (f_r^2+ g^2) \, r^2 \, dr  
  - Rf^2(R)] 
\end{align}
From the point-wise $t^{-1}$ decay of free waves in $\R^{1+3}$, we have 
 \EQ{\nn 
 (R+ \abs{t})u^2(t, R+\abs{t}) = O\left( \frac{R+ \abs{t}}{\abs{t}^2} \right) \mas \abs{t} \to \infty
 }
 Hence we get 
\begin{align}
\label{3d:form1}
\lim_{|t|\rightarrow\infty}
\int_{r\geq R+\abs{t}} (u_r^2(t, r)+ u_t^2(t, r)) \, r^2 \, dr  
  \geq  \frac12 [\int_{r\geq R} (f_r^2+ g^2) \, r^2 \, dr  
  - Rf^2(R)] 
\end{align}
Let us denote by $\pi_{L(R)}$ the orthogonal projection onto the line $$L(R):= \{ (k/r,0)\mid k\in\R\}$$ in $$(\dot H^1\times L^2)(r\geq R; r^2\, dr).$$
In other words,
\EQ{\nn 
\pi_{L(R)}(f,g) &= \f{\int_R^\I f'(r) \pr_r(\f{1}{r}) \, r^2 \, dr }{\int_R^\I \big( \pr_r(\f{1}{r})\big)^2 \, r^2 dr}\; \big ( \f{1}{r},0\big)
= f(R) \big ( \f{R}{r},0\big)
}
The projection onto the orthogonal complement is
\EQ{
\label{piperp}
\pi_{L(R)}^\perp (f,g) = (f,g)- f(R) \big ( \f{R}{r},0\big)
}
Since $\int_{r\geq R+\abs{t}} (u_r^2(t, r)+ u_t^2(t, r)) \, r^2 \, dr  $ is monotonically decreasing as $t$ increases,   we can restate (\ref{radial-3d-nolimit}) in the following form. 

\begin{cor}[Corollary of Lemma 4.2 in~\cite{DKM11}]\label{prop:3dproj} 
For any $R>0$ and $(f,g)\in \dot H^1\times L^2(\R^3)$ radial, the solution of~\eqref{eq:lw} satisfies
\EQ{\label{3d:estimate}
\int_{r\geq |t|+R} |\nabla_{t,x} u(t,r)|^2\, r^2\, dr  \ge \frac12 \int_{r\geq R} |\nabla_{t,x} \, \pi_{L(R)}^\perp (f,g) (r)|^2\, r^2\, dr
}
for all $t\ge0$ or all $t\le 0$.
\end{cor}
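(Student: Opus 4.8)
The plan is to obtain \eqref{3d:estimate} directly from \eqref{radial-3d} with $R_1=+\infty$ --- that is, from Lemma~4.2 of~\cite{DKM11} --- so that the pointwise decay and the monotonicity that the discussion above uses to reach the limiting form \eqref{3d:form1} are not actually needed, although they furnish an equivalent route. Recall first how \eqref{radial-3d-nolimit} arises: one substitutes the pointwise identity $(\pr_r(ru))^2=u_r^2r^2+\pr_r(u^2r)$ into \eqref{radial-3d} at $R_1=+\infty$, integrates the total-derivative terms $\pr_r(u^2r)$ and $\pr_r(f^2r)$, and drops the contributions at $r=\infty$ --- legitimate because $ru^2(t,r)\to0$ and $rf^2(r)\to0$ as $r\to\infty$ for radial $\dot H^1\times L^2$ data, by the radial Sobolev inequality on $\R^3$. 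The key observation is then that the left-hand side of \eqref{radial-3d-nolimit} is the left-hand side of \eqref{3d:estimate} minus the \emph{nonnegative} quantity $(R+|t|)u^2(t,R+|t|)$, so deleting that term can only strengthen the inequality; hence \eqref{radial-3d-nolimit} yields
\[
\int_{r\ge R+|t|}\big(u_r^2(t,r)+u_t^2(t,r)\big)\,r^2\,dr\ \ge\ \tfrac12\Big[\int_{r\ge R}(f_r^2+g^2)\,r^2\,dr-Rf^2(R)\Big]
\]
for all $t\ge0$, or for all $t\le0$, according to the alternative in \eqref{radial-3d}. Since $|\nabla_{t,x}u|^2=u_r^2+u_t^2$ for radial $u$, the left side here is that of \eqref{3d:estimate}, so it remains only to recognize the bracket as $\int_{r\ge R}|\nabla_{t,x}\,\pi_{L(R)}^\perp(f,g)|^2\,r^2\,dr$.

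For that, \eqref{piperp} gives $\pi_{L(R)}^\perp(f,g)=\big(f-f(R)\tfrac{R}{r},\,g\big)$, whence
\[
\int_{r\ge R}|\nabla_{t,x}\,\pi_{L(R)}^\perp(f,g)|^2\,r^2\,dr=\int_R^\infty\Big(f'(r)+f(R)\tfrac{R}{r^2}\Big)^2r^2\,dr+\int_R^\infty g^2r^2\,dr,
\]
and expanding the square, using $\int_R^\infty f'\,dr=-f(R)$ and $\int_R^\infty r^{-2}\,dr=R^{-1}$, collapses the cross and quadratic terms to $-2Rf^2(R)+Rf^2(R)=-Rf^2(R)$, reproducing the bracket. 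Equivalently, by orthogonality $\|\pi_{L(R)}^\perp(f,g)\|^2=\|(f,g)\|^2-\|\pi_{L(R)}(f,g)\|^2$, with $\|\pi_{L(R)}(f,g)\|^2=f^2(R)\int_R^\infty R^2r^{-2}\,dr=Rf^2(R)$.

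I do not expect a genuine obstacle in the corollary: the substantive input is \eqref{radial-3d}, the three-dimensional channel-of-energy estimate of~\cite{DKM11}, which is assumed, and everything else is bookkeeping. The only point that is not purely formal is the vanishing of the $r=\infty$ boundary terms above --- and the value $f(\infty)=0$ used in the projection computation --- all of which follow from the radial Sobolev bound $|h(r)|\lesssim r^{-1/2}\|\nabla h\|_{L^2(|x|\ge r)}$ for radial $h\in\dot H^1(\R^3)$, applied to $h=u(t)$ and $h=f$. Should one instead want the limiting statement \eqref{3d:form1} --- which is equivalent to the corollary because $\int_{r\ge R+|t|}(u_r^2+u_t^2)r^2\,dr$ is non-increasing in $|t|$ (the local energy identity, the flux through the characteristic cone $r=R+|t|$ being nonnegative) --- one would in addition invoke the sharp $O(|t|^{-1})$ pointwise decay of radial free waves to annihilate $(R+|t|)u^2(t,R+|t|)$ in the limit; the direct argument above needs neither.
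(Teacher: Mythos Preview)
Your argument is correct and follows essentially the same route as the paper's discussion preceding the corollary: both start from \eqref{radial-3d} with $R_1=+\infty$, pass to \eqref{radial-3d-nolimit}, and identify the right-hand bracket with $\|\pi_{L(R)}^\perp(f,g)\|^2$ via~\eqref{piperp}. The one difference is that the paper reaches \eqref{3d:estimate} by first taking $|t|\to\infty$ (using the $t^{-1}$ pointwise decay to kill $(R+|t|)u^2(t,R+|t|)$) and then invoking monotonicity of the exterior energy, whereas you simply drop the nonnegative term $(R+|t|)u^2(t,R+|t|)$ from the left of \eqref{radial-3d-nolimit} directly---a mild but legitimate shortcut that you yourself flag.
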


Now we can state our  main result, which   extends  the above Corollary to all higher odd dimension.  
As usual,  $[x]$ denotes the largest integer $k\in \Z, k\leq x$. 

\begin{thm}
 \label{CoreThm}
In any odd dimension $d>0$, every radial energy solution of~\eqref{eq:lw} satisfies for every $R>0$
\EQ{\label{extd}
\max_{\pm}\lim_{t\to\pm\I} \int_{r\geq |t|+R} |\nabla_{t,x} u(t,r)|^2\, r^{d-1}\, dr \ge \frac12 \| \pi_{P(R)}^\perp\, (f,g)\|_{\dot H^1\times L^2(r\geq R;r^{d-1}\,dr)}^2
}
Here
 \[
 P(R):=\mathrm{span}\Big \{(r^{2k_1-d},0), (0,r^{2k_2-d}) \mid k_1=1, 2,\cdots [\frac{d+2}{4}];\; k_2=1, 2,\cdots [\frac{d}{4}];\;  r\geq R \Big\}
 \]
 and  $\pi_{P(R)}^\perp$ denotes the orthogonal projection onto the complement of the plane $P(R)$
in $(\dot H^1\times L^2)(r\geq R;r^{d-1}\,dr)$.

The inequality becomes an equality for data of the form $(0,g)$ and $(f,0)$. Moreover,  the left-hand side of~\eqref{extd} vanishes exactly for data in~$P(R)$.
\end{thm}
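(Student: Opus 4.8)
The plan is to pass to the radial Fourier side, where, as indicated in the introduction, the exterior energy flux has the explicit asymptotic formula involving the Hankel transform $H$ and the Hilbert transform $\q H$. Concretely, I would first record the representation
\EQ{\nn
\lim_{t\to\pm\I}\int_{r\ge|t|+R}|\nabla_{t,x}u(t,r)|^2\,r^{d-1}\,dr
}
in terms of $(f,g)$ restricted to $r\ge R$, using finite speed of propagation to reduce to data supported in $\{r\ge R\}$ (the general case follows since on $\{r\ge |t|+R\}$ the solution only depends on data in $\{r\ge R\}$). For such data one expands the free evolution via the radial Fourier transform; the asymptotics of the Bessel kernel $J_{\frac{d-2}{2}}$ produce, after the stationary-phase/translation argument already used in \cite{CKS}, an expression of the form
\EQ{\nn
\tfrac{\pi}{2}\big(\|\rho^{\frac{d+1}{2}}\hat f\|_{L^2}^2+\|\rho^{\frac{d-1}{2}}\hat g\|_{L^2}^2\big)
-\tfrac{(-1)^{[d/2]}}{2}\big(\ang{H(\rho^{\frac{d+1}{2}}\hat f),\rho^{\frac{d+1}{2}}\hat f}-\ang{H(\rho^{\frac{d-1}{2}}\hat g),\rho^{\frac{d-1}{2}}\hat g}\big)\pm\Re\ang{\rho^{\frac{d+1}{2}}\hat f,\q H(\rho^{\frac{d-1}{2}}\hat g)},
}
and the cross term can be eliminated by choosing the sign of time, exactly as in the even-dimensional discussion. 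So after the reduction the left side of \eqref{extd}, maximized over $\pm$, is bounded below by the diagonal quadratic form in $(f,g)$ obtained by dropping the cross term.

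Next I would identify the kernel and the lower bound. The operator $\tfrac12\Id-\tfrac{(-1)^{[d/2]}}{\pi}H$ on $L^2(0,\infty)$ is, by the classical computation of the spectrum of the finite Hilbert/Hankel (Mehler--Fock type) operator, a nonnegative operator whose kernel is finite-dimensional and spanned by a specific family of explicit functions; pulling this back through $\hat f\mapsto\rho^{\frac{d+1}{2}}\hat f$ and $\hat g\mapsto\rho^{\frac{d-1}{2}}\hat g$ and undoing the radial Fourier transform produces exactly the monomials $r^{2k-d}$ appearing in $P(R)$ — with the index ranges $k\le[\frac{d+2}{4}]$ for the $f$-slot and $k\le[\frac{d}{4}]$ for the $g$-slot reflecting the different powers $\frac{d+1}{2}$ vs.\ $\frac{d-1}{2}$ and the sign $(-1)^{[d/2]}$. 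Thus on the orthogonal complement $\pi_{P(R)}^\perp$ the diagonal form is bounded below by $\tfrac12$ times the $\dot H^1\times L^2$ norm (the factor $\tfrac12$ being the best constant, attained in the limit), which gives \eqref{extd}. The equality assertion for data $(f,0)$ or $(0,g)$ is immediate: for such data the cross term vanishes identically, so both choices of sign give the same value, namely the diagonal form, which equals $\tfrac12\|\pi_{P(R)}^\perp(f,0)\|^2$ (resp.\ $(0,g)$) by the spectral computation. Finally, that the left side of \eqref{extd} vanishes exactly on $P(R)$ follows because $P(R)$ is precisely the joint kernel of both diagonal forms (the $f$-part and the $g$-part), and on $P(R)$ the cross term also vanishes since one of the two slots is then a kernel element of $H$ whose Hilbert transform pairs to zero against the other — so both limits are zero; conversely if both limits vanish then in particular the $+$ and $-$ averages force the diagonal form to vanish, putting $(f,g)\in P(R)$.

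The main obstacle, and the part requiring real work, is the spectral/kernel identification: showing that $\tfrac12\Id-\tfrac{(-1)^{[d/2]}}{\pi}H\ge0$ with the stated finite-dimensional kernel, and matching that kernel explicitly — under the weights $\rho^{\frac{d\pm1}{2}}$ and after the inverse Hankel transform — with the span of the monomials $r^{2k-d}$ in the correct index ranges. This is where the dimension-dependent combinatorics $[\frac{d+2}{4}]$ and $[\frac{d}{4}]$ must be pinned down, and where one must verify the best constant $\tfrac12$ rather than something smaller. A secondary technical point is justifying the passage to the $t\to\pm\infty$ limit and the exchange of limit and integral rigorously for general energy data, which I would handle by the density/finite-speed-of-propagation reduction above together with the monotonicity of $t\mapsto\int_{r\ge|t|+R}|\nabla_{t,x}u|^2r^{d-1}\,dr$ already noted in the text.
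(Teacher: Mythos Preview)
Your proposal has a real gap at the very first step. The formula you quote from \cite{CKS} is the asymptotic exterior energy over the full cone $\{r\ge|t|\}$, i.e.\ the case $R=0$; for $R>0$ the limit has a different structure. Carrying out the same stationary-phase/Fourier computation with the truncation at $r\ge|t|+R$ produces the $R$-dependent kernels
\[
\frac{\sin(R(\rho_1\pm\rho_2))}{\rho_1\pm\rho_2},\qquad \frac{\cos(R(\rho_1\pm\rho_2))}{\rho_1\pm\rho_2},
\]
not the Hankel kernel $\frac{1}{\rho_1+\rho_2}$; see the asymptotic expression derived in Section~2 of the paper. So the quadratic form you would actually need to analyze is not $\tfrac12\Id-\tfrac{(-1)^{[d/2]}}{\pi}H$, and the spectral picture you invoke does not apply. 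In fact the Hankel transform $H$ on $L^2(0,\infty)$ has purely continuous spectrum $[0,\pi]$ (via the Mellin transform), so $\tfrac12\Id\pm\tfrac1\pi H$ has \emph{no} nontrivial kernel, let alone a finite-dimensional one; your identification of the monomials $r^{2k-d}$ as inverse Hankel transforms of kernel elements cannot go through. A related point: the elements of $P(R)$ live only on $\{r\ge R\}$ and do not extend to $\dot H^1\times L^2(\R^d)$, so they have no global radial Fourier transform to pull back.

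What the paper does instead is the following. After deriving the correct $R$-dependent asymptotic formula (Section~2), it substitutes the spherical-Bessel representation $\rho^\mu\hat g(\rho)=c\int_0^\infty g(r)\,\varphi_n(r\rho)\,r^{(d-1)/2}\,dr$ with $\varphi_n(z)=zj_n(z)$, and then computes the $\xi$-integral $\int_{-R}^R \calF\varphi_n(\xi/r_1)\overline{\calF\varphi_n(\xi/r_2)}\,d\xi$ explicitly (Lemma~\ref{lem:core} and Lemma~\ref{delta-com}). The heart of the argument is the exact formula for $\calF\varphi_n$ as a linear combination of $\chi_{(-1,1)}$, polynomials in $\xi$, and point masses (Lemma~\ref{phi-expansion}), together with a Cauchy-matrix inversion that produces the explicit projection coefficients $c_j,d_j$. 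This is where the ranges $k_1\le[\frac{d+2}{4}]$, $k_2\le[\frac{d}{4}]$ and the constant $\tfrac12$ emerge, via concrete algebraic identities (Lemma~\ref{contour-integral}). None of this is accessible from the $R=0$ formula or from the spectrum of~$H$.
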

\begin{nb} 
When $d=1$, $P(R)=\emptyset$. Hence, this is the one-dimensional estimate in the proof of~\cite[Lemma 4.2]{DKM11}.  We already know that for $d=3$ the estimate reduces to~(\ref{3d:estimate}). 
It is worth mentioning that  (\ref{extd}) is also proved in~\cite[Proposition 4.1]{KLS} for $d=5$. 
Since the argument for $d=1,3,5$ does not seem to generalize to higher odd dimension easily,  we choose a different path by following the Fourier method approach originally developed in~\cite{CKS} to handle even dimensions.
\end{nb}
\begin{nb} The conditions that $k_1\leq  [\frac{d+2}{4}] <\frac{d+2}{4}$ and $k_2 \leq [\frac{d}{4}]<\frac{d}{4}$ guarantee  that \[(r^{2k_1-d},0), (0,r^{2k_2-d})\in \dot{H}^1\times L^2(r\geq R, r^{d-1}\,dr).\]
\end{nb}
\begin{nb}
The initial data $(0,r^{2i-d}), 1\leq i\leq [\frac{d}{4}]$  correspond to a solution of the form 
\[u(t,r) = a_{1}\, t^{2i-1}r^{2-d} +a_2\, t^{2i-3}r^{4-d} +\cdots a_i\, t r^{2i-d}\]
and initial data $(r^{2i-d}, 0), 1\leq i \leq [\frac{d+2}{4}]$  correspond to the following solution 
\[u(t,r) = b_{1}\, t^{2i-2}r^{2-d} +b_2\, t^{2i-4}r^{4-d} +\cdots b_i\,  r^{2i-d}\]
with  coefficients $a_i=b_i=1$ from which  $a_j, b_j, 1\leq j<i$ are determined recursively.  
And it is  easy to verify that these solutions make the left hand side of (\ref{extd}) vanish. 
This is the original motivation for conjecturing the correct formulation of Theorem~\ref{CoreThm} involving orthogonal
projections onto these subspaces. 

\end{nb}
\begin{nb}  Using the monotone decreasing feature of the energy $\int_{R+\abs{t}} (u_r^2(t, r)+ u_t^2(t, r)) \, r^{d-1} \, dr$ as before, 
(\ref{extd}) can be restated as: For every $R>0$ and  for all $t\geq 0$ or all $t\leq 0$, 
\begin{equation}{\label{extd-1}   \int_{r\geq |t|+R}
|\nabla_{t,x} u(t,r)|^2\, r^{d-1}\, dr \ge \frac12 \|
\pi_{R}^\perp\, (f,g)\|_{\dot H^1\times L^2(r\geq R;r^{d-1}\,dr)}^2
}\end{equation}
Notice that when $R\rightarrow 0+$, this  agrees with (\ref{DKMext}).
\end{nb}

\section{Asymptotic formula}

Before proving Theorem~\ref{CoreThm}, we establish our conventions regarding the Fourier transform on $\R^d$
and  we recall a few standard facts about solutions to~\eqref{eq:lw} with radial initial data.
The solution $u(t)$ to \eqref{eq:lw} with data $(f, g)$ is given by
\[
u(t) = \cos(t|\nabla|)f+\frac{\sin(t|\nabla|)}{|\nabla|} g.
\]
Let  $\mathcal{F} $ to be the Fourier transform on $\R^1$, $\hat{f} ,\hat{g}$ be the Fourier transforms in~$\R^d$
\begin{equation}\label{Fourier}
\hat{f}(\xi)=\int_{\R^d} e^{-ix\cdot \xi} f(x) \, dx,\qquad f(x)=(2\pi)^{-d} \int_{\R^d} e^{ix\cdot \xi}  \hat{f}(\xi)\, d\xi.
\end{equation}
and the Parseval identity is
\begin{align}\label{PI}\int_{\R^d}f(x) \overline{g(x)}dx = \frac{1}{(2\pi)^d}\int_{\R^d}\hat{f}(\xi)\overline{\hat{g}(\xi)}d\xi\end{align}
in particular, we have Plancherel's identity $\|\hat{f} \|_{L^2(\mathbb{R}^d)}^2=(2\pi)^d \|f\|_{L^2(\mathbb{R}^d)}^2$.  

For radial functions, $\hat{f}$ is again radial. Recall that
\[
\widehat{\sigma_{S^{d-1}} } (\xi) = (2\pi)^{\frac{d}{2}} |\xi|^{-\nu} J_\nu(|\xi|),\quad \nu: = \frac{d-2}{2}\ge0
\]
where
$J_\nu$ is the Bessel function of the first type of order~$\nu$.
It is characterized as being the solution of
\begin{equation} \label{def:bessel}
x^2 J_{\nu}''(x) + x J_\nu'(x) + (x^2 - \nu^2) J_\nu(x) =0
\end{equation}
which is regular at $x=0$ (unique up to a multiplicative constant).
The inversion formula takes the form
\[
f(r) =  (2\pi)^{-\frac{d}{2}} \int_0^\infty \hat{f}(\rho) J_\nu(r\rho) (r\rho)^{-\nu} \ro^{d-1}\, d\ro
\]
For the solution $u(t,r)$ this means that
\begin{align*}
u(t,r) & =  (2\pi)^{-\frac{d}{2}} \int_0^\infty \left( \cos(t\rho) \hat f(\rho) + \frac{\sin(t \rho)}{\rho} \hat g(\rho) \right) J_{\nu}(r \rho) (r\rho)^{-\nu}  \rho^{d-1} \, d\rho \\
\partial_t u(t,r) & =  (2\pi)^{-\frac{d}{2}} \int_0^\infty \left(-\sin (t\rho) \rho \hat f(\rho) + \cos(t \rho) \hat g(\rho) \right) J_{\nu}(r \rho) (r\rho)^{-\nu}  \rho^{d-1} \,  d\rho.
\end{align*}
We shall invoke the standard asymptotics for the Bessel functions, see~\cite{AS},
\begin{equation}\label{eq:Jnu exp}
\begin{aligned}
J_\nu(x) = \sqrt{\frac{2}{\pi x} } \left[ (1+ \w_2(x)) \cos (x-\tau) + \w_1(x) \sin(x-\tau) \right], \\
J_\nu'(x) = \sqrt{\frac{2}{\pi x} } \left[ \tilde \w_1(x) \cos(x-\tau) - (1+\tilde \w_2(x)) \sin(x-\tau) \right].
\end{aligned}
\end{equation}
with phase-shift $\tau = (d-1)\frac{\pi}{4}$, and with the bounds (for $n \ge 0$, $x \ge 1$)
\EQ{
| \w_1^{(n)}(x)| + |\tilde \w_1^{(n)}(x)| &\le C_n\, x^{-1-n}, \\ | \w_2^{(n)}(x)| + |\tilde \w_2^{(n)}(x)| &\le C_n \, x^{-2-n}. \label{est:w}
}
of symbol type.

Now we start computing  the asymptotic form of the exterior energy as in \cite{CKS}, say for $t\ge0$. (We can take the data  to be Schwartz and also assume that $\hat{f}(\rho)$ and $\hat{g}(\rho)$ are supported on $0<\rho_{*}<\rho<\rho^{*}<\infty$). 
With all Fourier transforms being those in~$\R^d$, we claim that
\begin{equation}
\label{Eodd}
\begin{aligned}
&
(2\pi)^d   \big(  \| \partial_t u(t) \|_{L^2(r \ge t+R, r^{d-1}\,dr)}^2 + \| \partial_r u(t) \|_{L^2(r \ge t+R, r^{d-1}\,dr)}^2\big) \\
= & \frac{2}{\pi}\lim_{\eps\to0+}\int_{t+R}^\infty \iint_{\rho_1, \rho_2>0}   \left(-\sin (t\rho_1) \rho_1 \hat{f}(\rho_1) + \cos(t \rho_1) \hat g(\rho_1) \right) \\
& \qquad \qquad \cdot \left(-\sin (t\rho_2) \rho_2 \ol{ \hat{f}(\rho_2)} + \cos(t \rho_2) \ol{\hat g(\rho_2)} \right) \\
& \qquad \qquad \cdot  \cos(r \rho_1-\tau)  \cos(r \rho_2-\tau) ( \rho_1 \rho_2)^{\mu}   \, d\rho_1 d\rho_2\,  \, e^{-\eps r} \, dr\\
+& \frac{2}{\pi}\lim_{\eps\to0+}\int_{t+R}^\infty \iint_{\rho_1, \rho_2>0}    \left(\cos (t\rho_1) \rho_1 \hat{f}(\rho_1) + \sin(t \rho_1) \hat g(\rho_1) \right) \\
& \qquad \qquad \cdot \left(\cos(t\rho_2) \rho_2 \ol{ \hat{f}(\rho_2)} + \sin(t \rho_2) \ol{\hat g(\rho_2)} \right) \\
& \qquad \qquad \cdot  \sin(r \rho_1-\tau)  \sin(r \rho_2-\tau)   (\rho_1 \rho_2)^{\mu} \, d\rho_1 d\rho_2\,   \, e^{-\eps r} \, dr + o(1)
\end{aligned}
\end{equation}
where the $o(1)$ is for $t\to\infty$ and $\tau=\frac{d-1}{4}\pi$, $\mu=\frac{d-1}{2}$.
Moreover, we used the asymptotic expansions of the Bessel functions, and we absorbed all error terms
in the~$o(1)$, which will be justified at the end of this section.
In order to carry out the $r$-integration, we use (note $2\tau\in \Z\pi$ when $d$ is odd)
\EQ{ \nn
\cos(r \rho_1-\tau)  \cos(r \rho_2-\tau) &=\frac12[\cos(r(\rho_1+\rho_2)-2\tau) + \cos(r(\rho_1-\rho_2))]\\
&=\frac12[(-1)^\mu \cos(r(\rho_1+\rho_2)) + \cos(r(\rho_1-\rho_2))] \\
\sin(r \rho_1-\tau)  \sin(r \rho_2-\tau) &=\frac12[-\cos(r(\rho_1+\rho_2)-2\tau) + \cos(r(\rho_1-\rho_2))] \\
&= \frac12[-(-1)^\mu \cos(r(\rho_1+\rho_2)) + \cos(r(\rho_1-\rho_2))]
}
as well as the following fact: for any smooth compactly supported  functions $\phi, \psi $ on $(0,\infty)$, one has for every $t\in\R$
\begin{align}
& \lim_{\eps\to0+} \int_t^\infty \iint_{\rho_1, \rho_2>0}  \cos(r(\rho_1 - \rho_2)) \phi(\rho_1) \psi(\rho_2) \, e^{-\eps r}\,dr \, d\rho_1 d\rho_2 \nonumber \\
&\qquad = \pi
\int_0^\infty \phi(\rho) \psi(\rho) d\rho - \iint_{\rho_1, \rho_2>0}  \frac{\sin(t(\rho_1 - \rho_2))}{\rho_1-\rho_2} \phi(\rho_1) \psi(\rho_2) \, d\rho_1 d\rho_2 \label{ker1} \\
& \lim_{\eps\to0+}  \int_t^\infty \iint_{\rho_1, \rho_2>0}   \cos (r(\rho_1 + \rho_2))  \phi(\rho_1) \psi(\rho_2)  e^{-\eps r}\,dr\,  d\rho_1 d\rho_2 \nonumber \\
& \qquad = - \iint_{\rho_1, \rho_2>0}  \frac{\sin (t(\rho_1 + \rho_2))}{\rho_1+\rho_2} \phi(\rho_1) \psi(\rho_2) \, d\rho_1 d\rho_2. \label{ker2}
\end{align}
To prove~\eqref{ker1} we note that
\begin{align*}
 \lim_{\eps\to0+} \int_t^\infty  \cos(a r) e^{-\eps     r}\, dr & = \lim_{\eps\to0+} \frac12 \Big( -\frac{e^{t(ia-\eps)} }{ia-\eps}  +  \frac{e^{-t(ia+\eps)} }{ia+\eps} \Big)
= \pi \delta_{0}(a) - \frac{\sin(ta)}{a}
\end{align*}
where the limit is to be taken in the distributional sense.
For~\eqref{ker2} the argument is essentially the same.

\noindent In what follows, we slightly abuse notation by writing $\hat{f'}(\ro):=\rho\hat{f}(\ro)$.
Carrying out the $r$-integration in (\ref{Eodd}) using \eqref{ker1}, \eqref{ker2} and applying trigonometric identities yields: 
\begin{align*}
 &\frac{1}{\pi}\iint_{\rho_1, \rho_2>0}  \big[ \cos(t(\ro_1-\ro_2)) (\hat{f'}(\rho_1)  \ol{ \hat{f'}(\rho_2)} + \hat g(\rho_1) \ol{\hat g(\rho_2)} ) -\sin(t(\ro_1-\ro_2)) \\
& (\hat{f'}(\rho_1)  \ol{\hat g(\rho_2)}  - \hat g(\rho_1) \ol{ \hat{f'}(\rho_2)} )
\big] \Big( \pi\delta_0(\ro_1-\ro_2)  - \frac{\sin((t+R)(\ro_1-\ro_2))}{\ro_1-\ro_2} \Big) (\ro_1\ro_2)^\mu\, d\ro_1 d\ro_2 \\
&+ \frac{(-1)^\mu}{\pi}  \iint_{\rho_1, \rho_2>0}  \big[\cos(t(\ro_1+\ro_2)) (\hat{f'}(\rho_1)  \ol{ \hat{f'}(\rho_2)} - \hat g(\rho_1) \ol{\hat g(\rho_2)} ) \\
 &+ \sin(t(\ro_1+\ro_2)) (\hat{f'}(\rho_1)  \ol{\hat g(\rho_2)}  + \hat g(\rho_1) \ol{ \hat{f'}(\rho_2)} )
\big]   \frac{\sin((t+R)(\ro_1+\ro_2))}{\ro_1+\ro_2}   (\ro_1\ro_2)^\mu\, d\ro_1 d\ro_2
\end{align*}
which further simplifies to 
\EQ{\nn
&\int_0^\infty (|\hat{f'}(\rho)|^2+|\hat{g}(\rho)|^2)\ro^{d-1}\, d\ro \\
-& \frac{1}{2\pi} \iint_{\rho_1, \rho_2>0}  \frac{\sin((2t+R)(\ro_1-\ro_2)) + \sin(R(\ro_1-\ro_2))  }{\ro_1-\ro_2} (\hat{f'}(\rho_1)  \ol{ \hat{f'}(\rho_2)} +
\hat g(\rho_1) \ol{\hat g(\rho_2)} ) (\ro_1\ro_2)^\mu\, d\ro_1 d\ro_2 \\
+& \frac{1}{2\pi} \iint_{\rho_1, \rho_2>0}  \frac{\cos(R(\ro_1-\ro_2)) - \cos((2t+R)(\ro_1-\ro_2))  }{\ro_1-\ro_2}    (\hat{f'}(\rho_1)  \ol{\hat g(\rho_2)}  - 
  \hat g(\rho_1) \ol{ \hat{f'}(\rho_2)} )   (\ro_1\ro_2)^\mu\, d\ro_1 d\ro_2 \\
+ & \frac{(-1)^\mu}{2\pi} \iint_{\rho_1, \rho_2>0}  \frac{\sin((2t+R)(\ro_1+\ro_2)) + \sin(R(\ro_1+\ro_2))  }{\ro_1+\ro_2} (\hat{f'}(\rho_1)  \ol{ \hat{f'}(\rho_2)} - 
 \hat g(\rho_1) \ol{\hat g(\rho_2)} ) (\ro_1\ro_2)^\mu\, d\ro_1 d\ro_2 \\
+ & \frac{(-1)^\mu}{2\pi}  \iint_{\rho_1, \rho_2>0}  \frac{\cos(R(\ro_1+\ro_2)) - \cos((2t+R)(\ro_1+\ro_2))  }{\ro_1+\ro_2}    (\hat{f'}(\rho_1)  \ol{\hat g(\rho_2)}  +  
\hat g(\rho_1) \ol{ \hat{f'}(\rho_2)} )   (\ro_1\ro_2)^\mu\, d\ro_1 d\ro_2
}
We may now pass to the limit $t\to\infty$ keeping $R \geq 0$ fixed.
The terms involving $\sin((2t+R)(\ro_1+\ro_2))$ and $\cos((2t+R)(\ro_1+\ro_2))$ in
the fourth and fifth lines, respectively, vanish in the limit $t\to\infty$ as can be seen by integration by parts (we may again assume that the data are Schwartz, with
Fourier transforms compactly supported in $(0,\I)$).
The asymptotic behavior of the terms
 involving $\sin((2t+R)(\ro_1-\ro_2))$ and $\cos((2t+R)(\ro_1-\ro_2))$ in
the second and third lines, respectively, require more care. In fact, using that for any $a>0$ 
\begin{equation}\label{Fsin}
\calF[\frac{\sin (ax)}{x}](\xi)=\pi \chi_{(-a,a)}(\xi),\qquad \calF[\frac{\cos (ax)}{x}](\xi) = \pi i [-\chi_{(-\infty,-a)}+\chi_{(a,\infty)}]
\end{equation}
one has
\EQ{\nn
& \lim_{t\to\infty} \iint_{\rho_1, \rho_2>0} \frac{\sin((2t+R)(\ro_1-\ro_2)) }{\ro_1-\ro_2} (\hat{f'}(\rho_1)  \ol{ \hat{f'}(\rho_2)} + \hat g(\rho_1) \ol{\hat g(\rho_2)} ) (\ro_1\ro_2)^\mu\, d\ro_1 d\ro_2\\
&= \pi\int  (|\hat{f'}(\rho)|^2+|\hat{g}(\rho)|^2)\ro^{d-1}\, d\ro
}
as well as
\[
\lim_{t\to\infty} \iint_{\rho_1, \rho_2>0} \frac{  \cos((2t+R)(\ro_1-\ro_2))  }{\ro_1-\ro_2}    (\hat{f'}(\rho_1)  \ol{\hat g(\rho_2)}  - \hat g(\rho_1) \ol{ \hat{f'}(\rho_2)} )   (\ro_1\ro_2)^\mu\, d\ro_1 d\ro_2 =0
\]
In conclusion, we obtain the following asymptotic expression for the left-hand side of the exterior energy (\ref{Eodd}) as $t\to\pm\infty$:
\begin{equation}\label{conclusion-formula}\begin{aligned}
&\frac{1}{2}\int_0^\infty  (|\hat{f'}(\rho)|^2+|\hat{g}(\rho)|^2)\ro^{d-1}\, d\ro \\
-&\frac{1}{2\pi} \iint_{\rho_1, \rho_2>0} \frac{\sin(R(\ro_1-\ro_2))  }{\ro_1-\ro_2} (\hat{f'}(\rho_1)  \ol{ \hat{f'}(\rho_2)} + \hat g(\rho_1) \ol{\hat g(\rho_2)} ) (\ro_1\ro_2)^\mu\, d\ro_1 d\ro_2 \\
+&\frac{(-1)^\mu}{2\pi} \iint_{\rho_1, \rho_2>0} \frac{ \sin(R(\ro_1+\ro_2))  }{\ro_1+\ro_2} (\hat{f'}(\rho_1)  \ol{ \hat{f'}(\rho_2)} - \hat g(\rho_1) \ol{\hat g(\rho_2)} ) (\ro_1\ro_2)^\mu\, d\ro_1 d\ro_2 \\
\pm & \frac{1}{\pi}\Re \iint_{\rho_1, \rho_2>0} \Big[ \frac{\cos(R(\ro_1-\ro_2))  }{\ro_1-\ro_2} + (-1)^\mu\frac{\cos(R(\ro_1+\ro_2))   }{\ro_1+\ro_2}   \Big]  \hat{f'}(\rho_1)  \ol{\hat g(\rho_2)}     (\ro_1\ro_2)^\mu\, d\ro_1 d\ro_2
\end{aligned}\end{equation}
The direction of time is chosen so as to make  the last line~$\ge0$.

Let us denote the following  asymptotic  exterior energy (recall $\mu=\frac{d-1}{2}$)
\EQ{\label{g3}
 AS_d(g): =&\frac{1}{2}\int_{0}^\infty   |\hat{g}(\rho)|^2 \ro^{d-1}\, d\ro \\
&-\frac{1}{2\pi} \iint_{\rho_1, \rho_2>0} \frac{\sin(R(\ro_1-\ro_2))  }{\ro_1-\ro_2}   \hat g(\rho_1) \ol{\hat g(\rho_2)}  (\ro_1\ro_2)^\mu\, d\ro_1 d\ro_2 \\
&-\frac{(-1)^{\mu}}{2\pi} \iint_{\rho_1, \rho_2>0} \frac{ \sin(R(\ro_1+\ro_2))  }{\ro_1+\ro_2}   \hat g(\rho_1) \ol{\hat g(\rho_2)}  (\ro_1\ro_2)^\mu\, d\ro_1 d\ro_2
}
\EQ{\label{Asf}
AS_d(f):=& \frac{1}{2}\int_0^\infty    |\hat{f}(\rho)|^2 \, \ro^{d+1}\, d\ro \\
&-\frac{1}{2\pi} \iint_{\rho_1, \rho_2>0} \frac{\sin(R(\ro_1-\ro_2))  }{\ro_1-\ro_2}  \hat{f}(\rho_1)  \ol{ \hat{f}(\rho_2)}  (\ro_1\ro_2)^{\mu+1}\, d\ro_1 d\ro_2 \\
&+\frac{(-1)^\mu}{2\pi} \iint_{\rho_1, \rho_2>0} \frac{ \sin(R(\ro_1+\ro_2))  }{\ro_1+\ro_2}  \hat{f}(\rho_1)  \ol{ \hat{f}(\rho_2)}   (\ro_1\ro_2)^{\mu+1}\, d\ro_1 d\ro_2
}
Hence we obtain 
\begin{equation}\label{Asfg-0}
\max_{\pm}\lim_{t\rightarrow \pm\infty}(2\pi)^d   \big(  \| \partial_t u(t) \|_{L^2(r \ge |t|+R, r^{d-1}\, dr)}^2 + \| \partial_r u(t) \|_{L^2(r \ge |t|+R, r^{d-1}\,dr)}^2\big) \geq AS_d(f)+AS_d(g)
\end{equation}
From the explanation right after (\ref{conclusion-formula}), we also notice that (\ref{Asfg-0}) takes equal sign when we  have only $(0,g)$ or $(f,0)$ data, and it holds for both time directions.

Now we   would like to obtain a lower bound for the asymptotic  exterior energy $AS_d(g)$  (\ref{g3}) and $AS_d(f)$ (\ref{Asf}).

If $R=0$ there is nothing to be done. Now let $R>0$.  
 The radial Fourier transform satisfies,  with $\nu=\frac{d-2}{2}$,
\begin{equation}
\begin{aligned}\label{rho-g}
\ro^\mu \hat{g}(\rho) &=  (2\pi)^{\frac{d}{2}} \int_0^\infty g(r) J_\nu(r\ro) (r\ro)^{-\nu}\ro^\mu r^{d-1}\, dr \\
&= 2 (2\pi)^{\frac{d-1}{2}} \int_0^\infty g(r) j_n(r\rho) r\rho\, r^{\frac{d-1}{2}}\, dr
\end{aligned}\end{equation}
where we used $j_n(z)=\Big(\frac{\pi}{2 z}\Big)^{\frac12} J_\nu(z)$, $n:=(d-3)/2$. The so-called {\em spherical Bessel functions} $j_n(z)$ are of the form
\begin{equation}\label{Bessel}\begin{aligned}
j_0(z) &= \frac{\sin z}{z} \\
j_1(z) &= \frac{\sin z}{z^2}-\frac{\cos z}{z}\\
j_2(z) &= \Big(\frac{3}{z^3}-\frac1z\Big)\sin z - \frac{3}{z^2}\cos z
\end{aligned}
\end{equation}
and for general $n\ge0$:
\begin{equation}\label{Bessel-general}
j_n(z) = (-z)^n \Big(\frac{1}{z}\frac{d}{dz}\Big)^n \frac{\sin z}{z}
\end{equation}
It is worth noticing that 
these functions alternate between being odd and even depending on $n$ being odd or even, respectively.

Now we will plug  (\ref{rho-g}) into (\ref{g3}), (\ref{Asf}) to prove the following lemma
 (based on the condition \eqref{Eodd} being fulfilled,  which we will verify afterwards). 

\begin{lem}
 \label{lem:core}  
 Take $\kappa(x)$ to be a normalized bump function  on $\R$, i.e., 
  $\kappa(x)$ is smooth,  even, nonnegative function such that 
  $\kappa(|x|)$ is decreasing,   $supp \,\kappa\subset [-1,1]$ and $\int_{\R}\kappa(x)\, dx=1$. Denote $\kappa_{\eps}(x)=\frac{1}{\eps}\kappa(\frac{x}{\eps})$.
  
  Assume  \eqref{Eodd} to hold true, then in any odd dimension~$d$, every radial energy solution of (\ref{eq:lw}) satisfies the following inequality for every $R>0$
\begin{equation}\label{Asfg}
\max_{\pm}\lim_{t\rightarrow \pm\infty}(2\pi)^d   \big(  \| \partial_t u(t) \|_{L^2(r \ge |t|+R, r^{d-1}\, dr)}^2 + \| \partial_r u(t) \|_{L^2(r \ge |t|+R, r^{d-1}\, dr)}^2\big) \geq AS_d(f)+AS_d(g)
\end{equation}
with 
 \EQ{\label{g3'}
AS_d(g)=&2^{d-1}\pi^{d} \int |g(r)|^2 r^{d-1}\,dr - 2^{d-1}\pi^{d-2} \lim_{\eps\rightarrow 0} \iint_{r_1,r_2 > 0} K_{R,\eps }(r_1,r_2)\,  g(r_1) \ol{ g(r_2)}   (r_1 r_2)^{\frac{d-1}{2}}\, dr_1 dr_2 \\
&K_{R,\eps }(r_1,r_2) =  \frac{1}{2r_1r_2}\int_{-R}^{R} (\kappa_{\eps}*\calF \varphi_n)(\frac{\xi}{r_1})\overline{(\kappa_{\eps}*\calF \varphi_n)(\frac{\xi}{r_2})} \, d\xi
}
\EQ{\label{f3'}
AS_d(f) =&2^{d-1}\pi^{d}\int_0^\infty   |{f'}(r)|^2 r^{d-1}\, dr-  2^{d-1}\pi^{d-2}  \lim_{\eps\rightarrow 0}  \iint_{r_1, r_2 > 0} \tilde K_{R,\eps}(r_1,r_2)\,  f(r_1) \ol{ f(r_2)}   (r_1 r_2)^{\frac{d-1}{2}}\, dr_1 dr_2 \\
&\tilde K_{R,\eps }(r_1,r_2) = 
 \frac{1}{2r_1^2r_2^2}\int_{-R}^{R} (\kappa_{\eps}*\calF \psi_n)(\frac{\xi}{r_1})\overline{(\kappa_{\eps}*\calF \psi_n)(\frac{\xi}{r_2})} \, d\xi 
}
here $n=\frac{d-3}{2}$, $\varphi_n(z)=zj_n(z)$,  $ \psi_n(z)=z\varphi_n(z)= z^2j_n(z)$ with $j_n(z)$ being the spherical Bessel functions defined in (\ref{Bessel-general}).
 Moreover, (\ref{Asfg}) takes equal sign when we have $(f,0)$ or $(0,g)$ data  and it is true for both time directions. 
 \end{lem}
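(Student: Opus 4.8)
The plan is to treat the hypothesis~\eqref{Eodd} as given and to observe first that the bulk of Lemma~\ref{lem:core} is already in hand. Granting~\eqref{Eodd}, the chain of computations carried out in this section---from~\eqref{Eodd} through the distributional identities~\eqref{ker1}--\eqref{ker2}, the trigonometric reductions, and the passage $t\to\pm\infty$---is valid verbatim and yields~\eqref{conclusion-formula}, hence~\eqref{Asfg-0}; this is exactly~\eqref{Asfg} with $AS_d(f),AS_d(g)$ defined by~\eqref{Asf},~\eqref{g3}. Likewise the claim that~\eqref{Asfg} holds with equality for data $(f,0)$ or $(0,g)$, and in \emph{both} time directions, is contained in the remark after~\eqref{conclusion-formula}: for such data the cross term on the last line of~\eqref{conclusion-formula} vanishes identically, so there is no sign to optimise over and~\eqref{Eodd} pins $\lim_{t\to\pm\infty}$ to the exact value $AS_d(f)$, respectively $AS_d(g)$. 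Thus the only new content is the passage from~\eqref{g3},~\eqref{Asf} to~\eqref{g3'},~\eqref{f3'}. I would prove this as an identity, first for Schwartz data with $\hat f,\hat g$ supported in a compact subset of $(0,\infty)$---the reduction already in force---and then extend to arbitrary energy data by density, both sides being continuous in $(f,g)$: on the frequency side because $g\mapsto\rho^\mu\hat g$ and $f\mapsto\rho^{\mu+1}\hat f$ map $L^2(r^{d-1}\,dr)$ and $\dot H^1$ isometrically onto $L^2((0,\infty),d\rho)$ up to the constant $(2\pi)^{d/2}$, while $\sin(R(\rho_1\mp\rho_2))/(\rho_1\mp\rho_2)$ acts boundedly on $L^2$ (it is the Fourier multiplier $\pi\chi_{(-R,R)}$ restricted to the half-line).

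For the diagonal terms this is Plancherel: $\tfrac12\int_0^\infty|\hat g(\rho)|^2\rho^{d-1}\,d\rho=\tfrac12\int_0^\infty|\rho^\mu\hat g(\rho)|^2\,d\rho=2^{d-1}\pi^d\int_0^\infty|g(r)|^2r^{d-1}\,dr$ for radial $g$, giving the first term of~\eqref{g3'}, and similarly $\tfrac12\int_0^\infty|\hat f(\rho)|^2\rho^{d+1}\,d\rho=\tfrac12\int_0^\infty|\widehat{f'}(\rho)|^2\rho^{d-1}\,d\rho$ gives the first term of~\eqref{f3'}. For the off-diagonal terms the mechanism is the interplay between the sign $(-1)^\mu$ in front of the $(\rho_1+\rho_2)$-kernel and the parity of the spherical Bessel functions. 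Substituting the Hankel representation~\eqref{rho-g}, $\rho^\mu\hat g(\rho)=2(2\pi)^{\frac{d-1}2}\int_0^\infty g(r)\varphi_n(r\rho)r^{\frac{d-1}2}\,dr$ with $\varphi_n(z)=zj_n(z)$, into the bilinear term of~\eqref{g3} and using Fubini reduces it to a constant multiple of $\iint_{r_1,r_2>0}g(r_1)\overline{g(r_2)}(r_1r_2)^{\frac{d-1}2}\,\big[\iint_{\rho_1,\rho_2>0}M_R(\rho_1,\rho_2)\varphi_n(r_1\rho_1)\varphi_n(r_2\rho_2)\,d\rho_1\,d\rho_2\big]\,dr_1\,dr_2$, where $M_R(\rho_1,\rho_2)=\tfrac{\sin R(\rho_1-\rho_2)}{\rho_1-\rho_2}+(-1)^\mu\tfrac{\sin R(\rho_1+\rho_2)}{\rho_1+\rho_2}$. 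From~\eqref{Bessel-general} the function $j_n$ has parity $(-1)^n$, hence $\varphi_n(-z)=(-1)^{n+1}\varphi_n(z)=(-1)^\mu\varphi_n(z)$ since $n=\mu-1$. Folding the $\rho_2$-integral of the $(\rho_1+\rho_2)$-piece across the origin via $\rho_2\mapsto-\rho_2$ turns $\tfrac{\sin R(\rho_1+\rho_2)}{\rho_1+\rho_2}$ into $\tfrac{\sin R(\rho_1-\rho_2)}{\rho_1-\rho_2}$ and produces a factor $(-1)^\mu$ from $\varphi_n(-r_2\rho_2)$ that cancels the $(-1)^\mu$ already present; folding $\rho_1$ likewise, the two half-line double integrals combine into $\tfrac12\iint_{\R^2}\tfrac{\sin R(\rho_1-\rho_2)}{\rho_1-\rho_2}\varphi_n(r_1\rho_1)\varphi_n(r_2\rho_2)\,d\rho_1\,d\rho_2$. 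Then $\tfrac{\sin Ra}{a}=\tfrac12\int_{-R}^Re^{i\xi a}\,d\xi$ factors the kernel, $\int_{\R}e^{\mp i\xi\rho}\varphi_n(r\rho)\,d\rho=\tfrac1r(\calF\varphi_n)(\pm\xi/r)$, and so the bracketed integral equals a constant multiple of $\tfrac1{r_1r_2}\int_{-R}^R\calF\varphi_n(\xi/r_1)\,\overline{\calF\varphi_n(\xi/r_2)}\,d\xi$ (the conjugate produced by the reality of $\varphi_n$, up to the Hermitian symmetry $r_1\leftrightarrow r_2$); collecting constants reproduces~\eqref{g3'}.

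The $(f,0)$ case is entirely parallel once one notes that $\rho^{\mu+1}\hat f(\rho)=2(2\pi)^{\frac{d-1}2}\int_0^\infty f(r)\psi_n(r\rho)r^{\frac{d-3}2}\,dr$ with $\psi_n(z)=z\varphi_n(z)=z^2j_n(z)$, because $\rho\,\varphi_n(r\rho)=\tfrac1r\psi_n(r\rho)$. Here $\psi_n$ has parity $-(-1)^\mu$, \emph{opposite} to that of $\varphi_n$, and in~\eqref{Asf} the $(\rho_1+\rho_2)$-kernel carries the \emph{opposite} sign $-(-1)^\mu$ relative to~\eqref{g3}; so the same folding cancellation goes through, and the extra weight $1/(r_1r_2)$ coming from $\rho\,\varphi_n=\tfrac1r\psi_n$ is exactly what produces the factor $r_1^2r_2^2$ in the denominator of $\tilde K_{R,\eps}$. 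Tracking the constant $2(2\pi)^{\frac{d-1}2}$ from~\eqref{rho-g} against the $\tfrac1{2\pi}$ in~\eqref{g3},~\eqref{Asf} then yields~\eqref{f3'}.

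The step I expect to be the main obstacle is making the interchange of the $\rho$- and $\xi$-integrations rigorous and, more basically, making sense of the product $\calF\varphi_n(\xi/r_1)\,\overline{\calF\varphi_n(\xi/r_2)}$ itself. Since $j_n(z)=z^{-1}\sin(z-\tfrac{n\pi}2)+O(z^{-2})$, the functions $\varphi_n$ and $\psi_n$ do not decay at infinity, so $\calF\varphi_n$ and $\calF\psi_n$ are only tempered distributions---a Dirac mass at $\xi=\pm1$, from the non-decaying leading term of $j_n$, plus an $L^2$ function---and the pointwise product of two rescaled copies is a priori undefined along the diagonal $r_1=r_2$. This is precisely why the mollifier $\kappa_\eps$ is present: $\kappa_\eps*\calF\varphi_n$ is a smooth function, so the product and the integral over $[-R,R]$ are unambiguous, and the point to verify is that $\lim_{\eps\to0}\iint K_{R,\eps}(r_1,r_2)g(r_1)\overline{g(r_2)}(r_1r_2)^{\frac{d-1}2}\,dr_1\,dr_2$ exists and equals the bilinear term of~\eqref{g3}. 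Concretely I would re-insert an exponential regulator $e^{-\eps\rho}$ in the $\rho$-integrals---consistent with the $e^{-\eps r}$ already present in~\eqref{Eodd}---so that all the integrals above converge absolutely and Fubini is legitimate, carry out the manipulations, and then check that the Fourier-side trace of the regulator is precisely $\kappa_\eps*\calF\varphi_n$ in the limit $\eps\to0$. Off the diagonal the limiting kernel $\tfrac1{4r_1r_2}\int_{-R}^R\calF\varphi_n(\xi/r_1)\overline{\calF\varphi_n(\xi/r_2)}\,d\xi$ is well-defined (the Dirac part of one factor meets the $L^2$ part of the other, and the two Dirac parts, sitting at $\xi=\pm r_1$ and $\xi=\pm r_2$, are mutually singular), and a dominated-convergence argument shows it controls the double integral; the only delicate bookkeeping is when some $\pm r_j$ coincides with the truncation endpoint $\pm R$, which is a set of measure zero in $(r_1,r_2)$ and contributes nothing after the limit.
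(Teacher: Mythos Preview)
Your outline is right in spirit and very close to the paper's argument: the reduction to showing \eqref{g3}$\Rightarrow$\eqref{g3'} and \eqref{Asf}$\Rightarrow$\eqref{f3'}, the substitution of the Hankel representation~\eqref{rho-g}, and the parity argument that folds the two half-line $\rho$-integrals into a single integral over $\R^2$ are exactly what the paper does (your observation $\varphi_n(-z)=(-1)^{\mu}\varphi_n(z)$ and the opposite parity for $\psi_n$ matching the opposite sign in~\eqref{Asf} is the correct mechanism). Where you diverge from the paper, and where there is a genuine gap, is the choice of regulator.

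You propose to insert $e^{-\eps\rho}$ and then ``check that the Fourier-side trace of the regulator is precisely $\kappa_\eps*\calF\varphi_n$.'' But the Fourier transform of $e^{-\eps|\rho|}$ is a Poisson kernel, not $\kappa_\eps$; with that regulator you would land on a formula involving $P_\eps*\calF\varphi_n$ rather than $\kappa_\eps*\calF\varphi_n$, and you would then owe a separate argument that the two regularizations have the same $\eps\to0$ limit after pairing with $g\otimes\bar g$. The paper avoids this by choosing the regulator to \emph{be} the inverse Fourier transform of $\kappa_\eps$: set $I(z)=\int_\R\kappa(x)e^{ixz}\,dx\in\calS(\R)$, so $I(0)=1$ and $\calF\big[I(\eps\,\cdot)\big]=2\pi\kappa_\eps$. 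One then writes $\rho^\mu\hat g(\rho)=2(2\pi)^{\frac{d-1}2}\lim_{\eps\to0}\int_0^\infty\varphi_n(r\rho)I(\eps r\rho)\,g(r)r^{\frac{d-1}2}\,dr$, and with $\varphi_n^\eps(z):=\varphi_n(z)I(\eps z)$ one has exactly $\calF\varphi_n^\eps=\kappa_\eps*\calF\varphi_n$, which is how $\kappa_\eps$ enters~\eqref{g3'} on the nose rather than only in the limit. The second point you are missing is the justification of dominated convergence: the paper proves, by writing $\varphi_n(r\rho)=(-1)^n\frac{r^{n+1}}{\rho^{n-1}}\big(\frac1r\frac{d}{dr}\big)^n\frac{\sin(r\rho)}{r\rho}$ and integrating by parts repeatedly in $r$, a uniform bound $\big|\int_0^\infty\varphi_n(r\rho)I(\eps r\rho)h(r)\,dr\big|\le C_N(1+|\rho|)^{-N}$ with $C_N$ independent of $\eps$ (this uses that $I\in\calS$ and that $|\partial_r^k I(\eps r\rho)|\lesssim r^{-k}$). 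That uniform decay is what allows you to pull $\lim_{\eps\to0}$ outside the bilinear $\rho$-integral in~\eqref{g3} and then apply Fubini and Parseval legitimately; your proposal does not provide an analogue of this step. Once those two ingredients are in place, the rest of your sketch (fold by parity, factor $\frac{\sin R a}{a}=\frac12\int_{-R}^R e^{i\xi a}\,d\xi$, compute $\int_\R e^{-i\xi\rho}\varphi_n^\eps(r\rho)\,d\rho=\frac1r(\kappa_\eps*\calF\varphi_n)(\xi/r)$) goes through exactly as you describe.
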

\begin{proof} 
We only need to prove that (\ref{g3}), (\ref{Asf}) imply (\ref{g3'}),  (\ref{f3'}) for data $(f,g)$ which are compactly supported and smooth. 

To prove (\ref{g3'}),  we first remark that in Lemma~\ref{phi-expansion} we will show that $\hp_n$ is a compactly supported finite measure.
In particular, $\varphi_n$ and all its derivatives are bounded, $|\partial_{\alpha}\varphi_n(z)| \leq C_{n,\alpha}, \forall z$.

Let us denote 
\begin{equation}
 I(z)= \int_{\R}\kappa(x) e^{ix z}dx=2\pi (\calF^{-1}\kappa)(z)\label{i-formula}
\end{equation}
 so $I(z)\in \mathcal{S}(\R)$ and $I(0)=1$ ($\mathcal{S}$ here means the  class of Schwartz functions). 

Now take 
 $g\in C_0^\infty (\R^+)$, for any $\eps>0$ by dominated convergence we have 
\begin{align}
\label{rho-g-2}
\ro^\mu \hat{g}(\rho)  = 2 (2\pi)^{\frac{d-1}{2}} \lim_{\eps\rightarrow 0}\int_0^\infty  \varphi_n(r\rho)\,h(r)   I(\eps r\rho)\, dr
\end{align}
with $h(r)=g(r)r^{\frac{d-1}{2}}\in C_0^\infty (\R^+)$.

Now denote 
\[
G_{\eps, n}(\rho)=\int_0^\infty \varphi_n(r\rho)\, h(r)  I(\eps r\rho)\, dr
\] 
We claim 
\begin{equation}\label{G-bound}
|G_{\eps, n}(\rho)| \leq C_N (1+|\rho|)^{-N}
\end{equation} 
where $C_N$ is independent of $\eps$ and this holds for all integers $N>0$. 

To prove (\ref{G-bound}),   we only need to show it  for $\rho$ large enough. Let us  denote $\psi(z)=\frac{\sin z }{z}$, then $\varphi_n(z)= zj_n(z) = (-1)^n z^{n+1}\left(\frac{1}{z}\frac{d}{dz}\right)^n \psi(z)$, and 
\[\varphi_n(r\rho)=(-1)^n \frac{r^{n+1}}{\rho^{n-1}}\left(\frac{1}{r}\frac{d}{dr}\right)^n \psi(r\rho)\]
Applying integration by parts in $G_{\eps, n}$ we obtain 
\begin{equation}\label{G-eps}G_{\eps,n}(\rho)=\frac{1}{\rho^{n-1}}\int_{0}^\infty \psi(r\rho) \left(\frac{1}{r}\frac{d}{dr}\right)^n  (r^{n+1}h(r)\ier)\,dr\end{equation}
Notice that $(-1)^{N} \frac{d^{2N}}{dz^{2N}} \sin z=\sin z$, whence $\sin (r\rho)= \frac{(-1)^{N}}{\rho^{2N}} \frac{d^{2N}}{dr^{2N}}\sin (r\rho)$ and thus furthermore, 
\[\psi(r\rho)= \frac{\sin(r\rho)}{r\rho}=\frac{(-1)^{N}}{r\rho^{2N+1}} \frac{d^{2N}}{dr^{2N}}\sin (r\rho) \]
Plugging this into (\ref{G-eps}) and integrating by parts $2N$ times yields 
\begin{equation}\label{G-int}G_{\eps,n}(\rho)=\frac{(-1)^{N}}{\rho^{2N+n}}\int_{0}^\infty \sin (r\rho)  \frac{d^{2N}}{dr^{2N}}\frac{1}{r} \left(\frac{1}{r}\frac{d}{dr}\right)^n  (r^{n+1}h(r)\ier)\, dr\end{equation}
 From
 \[\frac{d^k}{dr^k} \ier = (\eps\rho)^kI^{(k)}(\eps r\rho) =\frac{1}{r^k} (\eps r\rho)^kI^{(k)}(\eps r\rho) \]
 and using the fact that $I(z)$ is Schwartz, we conclude that 
\[|\frac{d^k}{dr^k}\ier |\lesssim  r^{-k} \]
Combining this with the fact  that $h(r)$ is smooth and compactly supported away from origin, we see that the integral in (\ref{G-int}) is bounded, hence  we proved (\ref{G-bound}).

Next, we denote $\varphi_n^{\eps}(z)= \varphi_n(z)I(\eps z)$ and plug (\ref{rho-g-2}) into (\ref{g3}).  We may then 
apply the dominated convergence theorem to
 take the limit outside the integral, i.e., 
\begin{equation}\begin{aligned}\label{ASD-g-parity}
AS_d(g)=&\frac{1}{2}\int_0^\infty   |\hat{g}(\rho)|^2 \ro^{d-1}\, d\ro \\
-& \frac{2}{\pi}(2\pi)^{d-1} \lim_{\eps\to 0}\iiiint_{r_1,r_2,\rho_1,\rho_2>0}\!\!\!\!\!
\frac{\sin(R(\ro_1-\ro_2))  }{\ro_1-\ro_2}\varphi_n^{\eps}(r_1\rho_1)\varphi_n^{\eps}(r_2\rho_2)h(r_1)\overline{h(r_2)}\, d\ro_1 d\ro_2dr_1 dr_2\\
-& \frac{2(-1)^\mu}{\pi}(2\pi)^{d-1} \lim_{\eps\to0}\iiiint_{r_1,r_2,\rho_1,\rho_2>0}\!\!\!\!\!
\frac{\sin(R(\ro_1+\ro_2))  }{\ro_1+\ro_2}\varphi_n^{\eps}(r_1\rho_1)\varphi_n^{\eps}(r_2\rho_2)h(r_1)\overline{h(r_2)}\, d\ro_1 d\ro_2dr_1 dr_2
\end{aligned}\end{equation}
and the integrals converge absolutely because of (\ref{G-bound}).  

Now we can use the parity of $\varphi^{\eps}_n(z)$, which follows from the
parity of $j_n(z)$ (notice that  $I(z)$ is an even function) to combine the last two integrals 
\begin{equation}
\begin{aligned}\label{ASD-g}
AS_d(g)=&\frac{1}{2}\int_0^\infty   |\hat{g}(\rho)|^2 \ro^{d-1}\, d\ro \\
-& \frac{(2\pi)^{d-1}}{\pi} \lim_{\eps\to0}\iint_{r_1,r_2>0}\!\iint_{\rho_1,\rho_2\in \R^2}\!\!\!\!\!\!
\frac{\sin(R(\ro_1-\ro_2))  }{\ro_1-\ro_2}\varphi_n^{\eps}(r_1\rho_1)\varphi_n^{\eps}(r_2\rho_2)h(r_1)\overline{h(r_2)}\, d\ro_1 d\ro_2dr_1 dr_2 
\end{aligned}\end{equation}
We now define 
\begin{align}\label{G-eps-formula}G^{\eps}(\rho_1,\rho_2)=\iint_{r_1,r_2>0}\varphi_n^{\eps}(r_1\rho_1)\varphi_n^{\eps}(r_2\rho_2)h(r_1)\overline{h(r_2)}\,dr_1 dr_2\end{align}
the   same argument as the proof for (\ref{G-bound}) gives that $G^{\eps}(\rho_1,\rho_2)\in \mathcal{S}(\R\times\R)$. 

Take $\psi_{\delta}\in \mathcal{S}(\R)$ and $\psi_{\delta}(z)\rightarrow \psi:= \frac{\sin z}{z}$ as $\delta\rightarrow 0$ uniformly on compact sets with the uniform bound $\|\psi_{\delta}\|_{\infty}\leq C$. Now for 
$R$ fixed, we consider 
\begin{align}\iint_{\rho_1,\rho_2\in \R^2}\!\!\!\!\!\!
\frac{\sin(R(\ro_1-\ro_2))  }{\ro_1-\ro_2} G^{\eps}(\rho_1, \rho_2)d\ro_1 d\ro_2 &=  R\int_{-\infty}^\infty\left(\int_{-\infty}^\infty  \psi({ R(\ro_1-\ro_2)})G^{\eps}(\rho_1, \rho_2)d\ro_1\right) d\ro_2\nonumber\\
&=R\int_{-\infty}^\infty\lim_{\delta\rightarrow 0}\left(\int_{-\infty}^\infty \psi_{\delta}(R(\rho_1-\rho_2))G^{\eps}(\rho_1, \rho_2)d\ro_1\right) d\ro_2\label{delta-limit}
\end{align}
where the first equality holds due to the fact that the integral  converges absolutely,  
and the second equality follows  by dominated convergence. 

Apply Parseval's identity to the inner integral to deduce (we can take $\psi_{\delta}(z)$ to be real when $z$ is real)
\[ \int_{-\infty}^\infty \psi_{\delta}(R(\rho_1-\rho_2))G^{\eps}(\rho_1, \rho_2)d\ro_1 =\frac{1}{2\pi}\int_{-\infty}^{\infty} \overline{\frac{e^{-i\rho_2\xi}}{R}\mathcal{F}\psi_{\delta}(\frac{\xi}{R})}{\mathcal{F}_1G^{\eps}(\xi, \rho_2)}d\xi\]
 Here $\mathcal{F}_1 $ means that we are passing to the Fourier transform with respect to the first variable. 
 
 Since  $(\calF\psi)(\xi) =\pi\chi_{(-1,1)}(\xi) $,  and  since 
 $\calF \psi_{\delta}$ converges to $\calF\psi$ weakly as measures (and also weakly in $L^2$) we conclude that 
 \begin{equation}
 \label{F-G-1}
 \lim_{\delta\rightarrow 0}\int_{-\infty}^\infty \psi_{\delta}(R(\rho_1-\rho_2))G^{\eps}(\rho_1, \rho_2)\, d\ro_1 
 =\frac{1}{2 R}\int_{-R}^{R}  {e^{i\rho_2\xi}}  {\mathcal{F}_1G^{\eps}(\xi, \rho_2)}\, d\xi
 \end{equation}
Inserting (\ref{F-G-1}) into (\ref{delta-limit}) we obtain 
\begin{align} \label{F-G-2}
\frac{1}{2 }\int_{-\infty}^\infty \int_{-R}^{R} {e^{i\rho_2\xi}}  {\mathcal{F}_1G^{\eps}(\xi, \rho_2)}\,d\xi\, d\rho_2 =\frac{1}{2}\int_{-R}^{R} {\mathcal{F}_2G^{\eps}(\xi, -\xi)}d\xi 
\end{align}
Here $\calF_2$ means that we are applying the Fourier transform in both variables.

Next using the formula (\ref{G-eps-formula}) for $G^{\eps} $, we have
\begin{align}
\mathcal{F}_2G^{\eps}(\xi_1, -\xi_2)&=\iint_{\rho_1,\rho_2\in \R^2} e^{-i(\xi_1\rho_1-\xi_2\rho_2)}\iint_{r_1,r_2>0}\varphi_n^{\eps}(r_1\rho_1)\varphi_n^{\eps}(r_2\rho_2)h(r_1)\overline{h(r_2)}\,dr_1 dr_2 d\rho_1d\rho_2\nonumber \\
&=\int_{r_1>0,\rho_1\in \R}e^{-i\xi_1\rho_1}\varphi_n^{\eps}(r_1\rho_1)h(r_1)\, d\xi_1dr_1 \int_{r_2>0,\rho_2\in \R}e^{i\xi_2\rho_2}\varphi_n^{\eps}(r_2\rho_2)\overline{h(r_2)}\, d\xi_2dr_2
\label{F2-G}
\end{align}
where we used Fubini's theorem to interchange the order of integration because it converges absolutely for each $\eps$ fixed. 
From 
\[\int_{-\infty}^\infty e^{-i\xi_1\rho_1}\varphi_n^{\eps}(r_1\rho_1)d\xi_1=\frac{1}{r_1}\calF\varphi_n^{\eps}(\frac{\xi_1}{r_1})\]
and the fact that  $\varphi_n^{\eps}(r_2\rho_2)$ is real one infers that 
\[\int_{-\infty}^\infty e^{i\xi_2\rho_2}\varphi_n^{\eps}(r_2\rho_2)d\xi_2= \overline{\int_{-\infty}^\infty e^{-i\xi_2\rho_2}\varphi_n^{\eps}(r_2\rho_2)d\xi_2}=\frac{1}{r_2}\overline{\calF\varphi_n^{\eps}(\frac{\xi_2}{r_2})}\]
Combining (\ref{delta-limit})--(\ref{F2-G}) we arrive at 
\begin{align}\label{g-sin-formula}\iint_{\rho_1,\rho_2\in \R^2}\!\!\!\!\!\!
\frac{\sin(R(\ro_1-\ro_2))  }{\ro_1-\ro_2} G^{\eps}(\rho_1, \rho_2)d\ro_1 d\ro_2 =  \iint_{r_1,r_2>0}\int_{-R}^R\frac{1}{2r_1r_2}\calF\varphi_n^{\eps}(\frac{\xi}{r_1}) \overline{\calF\varphi_n^{\eps}(\frac{\xi}{r_2})}h(r_1)\overline{h(r_2)}\, d\xi \,dr_1\,dr_2\end{align}
The formula  (\ref{i-formula}) for $I(z)$ implies that  the Fourier transform of $I(\eps z)$ equals $2\pi \frac{1}{\eps}\kappa(\frac{\xi}{\eps})$. 
 And from our convention (\ref{Fourier}), we see that   $\calF (fg)=\frac{1}{2\pi}\calF f*\calF g$. So we can write 
$ \calF\varphi_n^{\eps}(\xi)=(\kappa_{\eps}*\hp_n)(\xi)$,  with $\kappa_{\eps}(x)=\frac{1}{\eps}\kappa(\frac{x}{\eps})$.
Recalling that  $h(r)=g(r)r^{\frac{d-1}{2}}$, we see that  (\ref{ASD-g}) and (\ref{g-sin-formula}) imply~(\ref{g3'}).

The proof for (\ref{f3'}) is almost identical. Since we take $f\in C_0^\infty (R^+)$ and since $\varphi_n(z)$  together 
with all its derivative are bounded, we infer from dominated convergence that 
\begin{align*}\label{rho-g-2}
\ro^{\mu+1} \hat{f}(\rho)  &= 2 (2\pi)^{\frac{d-1}{2}} \lim_{\eps\rightarrow 0}\int_0^\infty  \varphi_n(r\rho)\rho \,f(r)r^{\frac{d-1}{2}} \ier\, dr\\ & = 2 (2\pi)^{\frac{d-1}{2}} \lim_{\eps\rightarrow 0}\int_0^\infty  \psi_n(r\rho)  \,\tilde{h}(r)    \ier\, dr
\end{align*}
with $\tilde{h}(r)=f(r)r^{\frac{d-3}{2}}\in C_0^\infty (\R^+)$.

Now repeat the same argument as above with  with $h(r)$ replaced by $\tilde{h}(r)$, and $\varphi_n(z) $ replaced by $\psi_n(z)$. With these replacements one can check that we  
we obtain  
a formula analogous to~\eqref{ASD-g-parity} but with plus sign in the last line as opposed to a minus sign (notice the difference between (\ref{Asf}) and (\ref{g3})).  Then, using the parity of $\psi_n(z)$ we again get the following formula   
with $\psi_n^{\eps}(z) =\psi_n(z)I(\eps z) $, which is similar to (\ref{ASD-g}).
\begin{equation}\begin{aligned}\label{ASD-f}
AS_d(f)=&\frac{1}{2}\int_0^\infty   |\hat{f}(\rho)|^2 \ro^{d+1}\, d\ro \\
-& \frac{(2\pi)^{d-1}}{\pi} \lim_{\eps\to0}\iint_{r_1,r_2>0}\!\iint_{\rho_1,\rho_2\in \R^2}\!\!\!\!\!\!
\frac{\sin(R(\ro_1-\ro_2))  }{\ro_1-\ro_2}\psi_n^{\eps}(r_1\rho_1)\psi_n^{\eps}(r_2\rho_2)\tilde{h}(r_1)\overline{\tilde{h}(r_2)}d\ro_1 d\ro_2dr_1 dr_2 
\end{aligned}\end{equation}
 So we may continue the remainder of the argument to obtain 
 \begin{align*} &\iint_{r_1,r_2>0}\iint_{\rho_1,\rho_2\in \R^2}\!\!\!\!\!\!
\frac{\sin(R(\ro_1-\ro_2))  }{\ro_1-\ro_2}\psi_n^{\eps}(r_1\rho_1)\psi_n^{\eps}(r_2\rho_2)\tilde{h}(r_1)\overline{\tilde{h}(r_2)}\,dr_1 dr_2d\ro_1 d\ro_2 \\
=&  \iint_{r_1,r_2>0}\int_{-R}^R\frac{1}{2r_1r_2}\calF\psi_n^{\eps}(\frac{\xi}{r_1}) \overline{\calF\psi_n^{\eps}(\frac{\xi}{r_2})}\tilde{h}(r_1)\overline{\tilde{h}(r_2)}\, d\xi \,dr_1\,dr_2\end{align*}
Again we have $\calF\psi_n^{\eps}(\xi)=(\kappa_{\eps}*\calF{\psi}_n)(\xi)$.
 Now we can plug in $\tilde{h}(r)=\frac{1}{r}f(r)r^{\frac{d-1}{2}}$ and see that (\ref{ASD-f}) implies~(\ref{f3'}).
\end{proof}

It remains to prove \eqref{Eodd}. For simplicity, we establish \eqref{Eodd} only for the piece arising from the kinetic energy  as the contributions coming from the $\p_r u(t)$ term are dealt with similarly. First note that
\begin{equation}
\label{Ekinetic}
\begin{aligned}
&(2\pi)^d      \| \partial_t u(t) \|_{L^2(r \ge R+t, r^{d-1}\,dr)}^2  \\
=& (2\pi)^d   \lim_{\eps\to0+} \int_{R+t}^\infty   |\partial_t u(t,r)|^2  r^{d-1} e^{-\eps r} \, dr \\
 = &\lim_{\eps\to0+}\int_{R+t}^\infty \iint    \left(-\sin (t\rho_1) \rho_1 \hat f(\rho_1) + \cos(t \rho_1) \hat g(\rho_1) \right) \\
& \qquad \qquad \cdot \left(-\sin (t\rho_2) \rho_2 \ol{ \hat f(\rho_2)} + \cos(t \rho_2) \ol{\hat g(\rho_2)} \right) \\
& \qquad \qquad \cdot  J_{\nu}(r \rho_1)  J_{\nu}(r \rho_2) (r^2 \rho_1 \rho_2)^{-\nu}  (\rho_1 \rho_2)^{d-1} \, d\rho_1 d\rho_2\,  r^{d-1}\, e^{-\eps r} \, dr.
\end{aligned}
\end{equation}
Next, if we subtract  the contributions of the kinetic energy in \eqref{Eodd} from \eqref{Ekinetic} we are left with
\begin{align*}
&  {\frac{2}{\pi}} \lim_{\eps\to0+}\int\limits_{R+t}^\infty \int\limits_0^\infty \int\limits_0^\infty
\left(-\sin (t\rho_1) \rho_1 \hat f(\rho_1) + \cos(t \rho_1) \hat g(\rho_1) \right) 
 \left(-\sin (t\rho_2) \rho_2 \ol{\hat f(\rho_2)} + \cos(t\rho_2) \ol{ \hat g(\rho_2)} \right) \\
& \qquad \qquad\cdot \big[ \big(\omega_2(r\rho_1) + \omega_2(r\rho_2) + \omega_2(r\rho_1) \omega_2(r\rho_2) \big)   \cos (r \rho_1 -\tau)  \cos (r \rho_2-\tau) \\
&\qquad\qquad\quad  + \omega_1(r\rho_1) (1+\omega_2(r\rho_2))
\sin (r \rho_1 -\tau)  \cos (r \rho_2-\tau) \\
& \qquad\qquad\quad + \omega_1(r\rho_2) (1+\omega_2(r\rho_1))
\sin (r \rho_2 -\tau)  \cos (r \rho_1-\tau) \\
&\qquad\qquad \quad + \omega_1(r\rho_1) \omega_1(r\rho_2) \sin(r\rho_1-\tau)\sin(r\rho_2-\tau)\big]  (\rho_1 \rho_2)^\mu  \, d\rho_1 d\rho_2 \, e^{-\eps r}\, dr.
\end{align*}
where the $\omega_j$ are as in~\eqref{est:w}. 
All terms here are treated in a similar fashion. 
As a representative example, consider for all $\eps>0$ the error term
\begin{multline*}
E_1(\eps):= \int\limits_{R+t}^\infty \int\limits_0^\infty \int\limits_0^\infty \sin (t \rho_1) \sin (t \rho_2) \cos(r\rho_1 - \tau) \sin (r\rho_2-\tau) \w_1(r\rho_2) 
\cdot \hat f(\rho_1) \overline{\hat f(\rho_2)} (\rho_1 \rho_2)^{\mu+1} e^{-\eps r} \, d\rho_1 d\rho_2 dr,
\end{multline*}
As before, we write
\[ 
\cos(r\rho_1 - \tau) \sin (r\rho_2-\tau) = -\frac{1}{2} \big[ (-1)^\nu \cos(r (\rho_1+\rho_2)) + \sin(r(\rho_1- \rho_2)) \big], 
\]
expand the trigonometric functions on the right-hand side into complex exponentials,
and perform an integration by parts in the $r$ variable as follows: 
for any $\sigma\in\R$ and dropping the subscripts on $\omega,\rho$ for simplicity,  one has
\begin{equation}
\label{IBP}
\int_{t+R}^\infty e^{-[\eps\mp i\sigma]r} \, \omega(r\rho)\, dr = \frac{e^{-[\eps\mp i\sigma ](t+R)}}{\eps \mp i\sigma} \omega((t+R)\rho)
 + \int_{t+R}^\infty \frac{e^{-[\eps\mp i\sigma ]r}}{\eps \mp i\sigma} \omega'(r\rho)\rho \, dr.
\end{equation}
We apply this with $\sigma=\rho_1+\rho_2$ and $\sigma=\rho_1-\rho_2$
 to the fully expanded form of $E_1(\eps)$ as explained above.
In both cases one has the uniform bounds
\[
\sup_{\eps>0}\Big\| \int_{-\infty}^\infty \frac{\phi(\rho_2)}{(\rho_1 \pm \rho_2) \pm i\eps} \, d\rho_2\Big\|_{L^2(\rho_1)} \le C\|\phi\|_2.
\]
In order to use this, we distribute the exponential factors as well as all  weights over the functions $\hat f(\rho_1)$ and $ \overline{\hat f(\rho_2)} $, respectively.
For the first term on the right-hand side of~\eqref{IBP} we then obtain an estimate $O((t+R)^{-1})$ from the decay of the weight~$\omega$, whereas for the integral in~\eqref{IBP}
we obtain an $O(r^{-2})$-bound via
\[
\sup_{\rho>0} |\omega'(r\rho)\rho^2| \le C\, r^{-2}
\]
which then leads to the final bound of 
\[
\int_{t+R}^\infty O(r^{-2})\, dr = O((t+R)^{-1}).
\]
The $O$-here are   uniform in~$\eps>0$.  Note that various $\rho$-factors which are introduced by the $\omega$-weights  are
harmless due to our standing assumption that $0<\rho_*<\rho <\rho^*$.

All error terms fall under this scheme. In fact, those involving two $\omega$-factors yield a $O((t+R)^{-2})$-estimate. This concludes the proof.

\section{   Dimension 3, 5 and 7}

In this section we prove Theorem~\ref{CoreThm} for dimensions $d=3,5, 7$.  This will serve to illustrate the   method without
obscuring the arguments with excessive technical detail. 

We first  list the Fourier transforms of the Bessel functions  $\varphi_n(z), n=0,1,2$ which will be needed for dimensions $d=3,5,7$. 
These facts can be checked via direct computation from~(\ref{Bessel}). 
\begin{equation}\label{base-formula}\begin{aligned}
\hp_0(\xi) &= \pi i [\delta_{-1}-\delta_1],
\\
\hp_1(\xi) &= \pi  [\chi_{(-1,1)}(\xi)- \delta_{-1}-\delta_1],\\
\hp_2(\xi) &= \pi i [-3\xi \chi_{(-1,1)}(\xi) -\delta_{-1}+ \delta_1]
 \end{aligned}\end{equation}
The following lemma collects various limits which we will use repeatedly in the main argument. These limits all involve
regularizations by the mollifier  $\kappa_{\eps}(x)$ 
   which we introduced in Lemma~\ref{lem:core}.  In the lemma we denote by $\delta_{ab}$ the Kronecker delta 
   \[\delta_{ab}=\left\{\begin{aligned}1, \quad\text{ when } a=b\\
   0, \quad\text{ when }a\not =b \end{aligned}\right.\]
   In contrast, $\delta_y$ is the standard Dirac measure centered at $y\in\R$.   In other words, $(f\ast \delta_y)(x)=f(x-y)$.

   \begin{lem}\label{delta-com} Let $i, j \geq 1$, $r_1, r_2>0$. For $a, b\in \{0,1\}$, we have the following limits for any test function $\phi(r_1,r_2)\in C_0^\infty (\R^{+}\times \R^{+})$:
 \begin{equation}\label{chi-chi}
 \begin{aligned}&\lim_{\eps\rightarrow 0}\iint_{r_1,r_2>0}\left[\int_{-R}^{R}[\kappa_{\eps}*\xi^i \chi_{(-1,1)}](\frac{\xi}{r_1})[\kappa_{\eps}*\xi^j\chi_{(-1,1)}](\frac{\xi}{r_2})d\xi\right]  \phi(r_1,r_2)\,dr_1dr_2  \\ =&(1-(-1)^{i+j+1})\iint_{r_1,r_2>0}\frac{1}{ i+j+1}  \frac{\min(r_1,r_2,R)^{i+j+1}}{r_1^ir_2^j
 }  \phi(r_1,r_2)\,dr_1dr_2
 \end{aligned}
 \end{equation}
  \begin{equation}\label{del-del}
 \begin{aligned}
  &\lim_{\eps\rightarrow 0}\iint_{r_1,r_2>0}\left[\int_{-R}^R [\kappa_{\eps}*\delta_{(-1)^a}] (\frac{\xi}{r_1})[ \kappa_{\eps}*\delta_{(-1)^b} ] (\frac{\xi}{r_2}) d\xi \right]  \phi(r_1,r_2)\,dr_1dr_2   \\ =& \delta_{ab}\int_0^R \xi^2\phi(\xi,\xi)d\xi
 \end{aligned}
 \end{equation}
  \begin{equation}
 \begin{aligned} \label{delP-delP}
 &\lim_{\eps\rightarrow 0}\iint_{r_1,r_2>0}\left[\int_{-R}^R [\kappa_{\eps}*\delta_{(-1)^a}'](\frac{\xi}{r_1})  [\kappa_{\eps}*\delta_{(-1)^b}'](\frac{\xi}{r_2})  d\xi \right]  \phi(r_1,r_2)\,dr_1dr_2    \\ =&\delta_{ab}\int_0^R \left[\partial_x\partial_y\left( x^2y^2\phi(x,y)\right)\right]\Big|_{x=y=\xi}d\xi
 \end{aligned}
 \end{equation}
  \begin{equation}\label{chi-del}
 \begin{aligned}
 & \lim_{\eps\rightarrow 0}\iint_{r_1,r_2>0}\left[\int_{-R}^R[\kappa_{\eps}*\xi ^{i} \chi_{(-1,1)}](\frac{\xi}{r_1}) [\kappa_{\eps}*\delta_{(-1)^a}](\frac{\xi}{r_2}) d\xi \right]  \phi(r_1,r_2)dr_1dr_2   \\ =&\int_0^\infty \int_{0}^{\min(r_1,R)} r_2\left(\frac{(-1)^ar_2}{r_1}\right)^i \phi(r_1,r_2)  \,dr_2dr_1
 \end{aligned}
 \end{equation}
  \begin{equation}  
 \label{chi-delP}
 \begin{aligned}
  &\lim_{\eps\rightarrow 0}\iint_{r_1,r_2>0}\left[\int_{-R}^R[\kappa_{\eps}*\xi ^{i} \chi_{(-1,1)}](\frac{\xi}{r_1})[\kappa_{\eps}*\delta_{(-1)^a}'](\frac{\xi}{r_2}) d\xi  \right]  \phi(r_1,r_2)dr_1dr_2   \\=& (-1)^a\int_0^\infty \int_{0}^{\min(r_1,R)}
 \left(\frac{(-1)^ar_2}{r_1}\right)^i\partial_{r_2}( r_2^2\phi(r_1, r_2)) \,dr_1 dr_2
 \end{aligned}
 \end{equation}
 \begin{equation}\label{del-delP} \begin{aligned}
 &\lim_{\eps\rightarrow 0}\iint_{r_1,r_2>0}\left[\int_{-R}^R[\kappa_{\eps}*\delta_{(-1)^a}](\frac{\xi}{r_1})  [\kappa_{\eps}*\delta_{(-1)^b}'](\frac{\xi}{r_2})  d\xi \right]  \phi(r_1,r_2)\,dr_1dr_2  \\ =&  (-1)^a \delta_{ab}  \int_0^R \left[\partial_y\left(xy^2\phi(x,y)\right)\right]\Big|_{x=y=\xi}d\xi
\end{aligned}
\end{equation}
  \end{lem}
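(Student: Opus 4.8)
The plan is to prove all six identities by one reduction. Write each regularized kernel explicitly: $(\kappa_\eps * \delta_{(-1)^a})(x) = \kappa_\eps(x-(-1)^a)$, $(\kappa_\eps*\delta'_{(-1)^a})(x) = \kappa_\eps'(x-(-1)^a)$, and $(\kappa_\eps*\xi^i\chi_{(-1,1)})(x) = \int_{-1}^1 s^i\kappa_\eps(x-s)\,ds$. Three elementary facts carry the argument. \emph{(i)} A $\chi$-factor converges boundedly and for a.e.\ $\xi$ to the fixed function $(\xi/r)^i\chi_{(-1,1)}(\xi/r) = r^{-i}\xi^i\chi_{(-r,r)}(\xi)$, and the inner $\xi$-integrand is dominated by $\chi_{(-R,R)}$ times a fixed constant, so wherever only $\chi$-factors are present one passes to the limit by dominated convergence. \emph{(ii)} A $\delta$-factor is an approximate identity: after the substitution $\xi = r(u+(-1)^a)$ one has $\int_{\R} F(\xi)\kappa_\eps(\xi/r - (-1)^a)\,d\xi \to r\,F((-1)^a r)$ for $F$ continuous near $(-1)^a r$, so it concentrates $\xi$ at $(-1)^a r$ — which must lie in $(-R,R)$, i.e.\ $r < R$, for a nonzero contribution — and carries a Jacobian factor $r$. \emph{(iii)} A $\delta'$-factor is eliminated \emph{before} taking any limit by using the scaling symmetry of $\kappa_\eps(\cdot/r - c)$: from $\partial_r[\kappa_\eps(\xi/r - c)] = -\tfrac{\xi}{r^2}\kappa_\eps'(\xi/r-c)$ we get $\kappa_\eps'(\xi/r-c) = -\tfrac{r^2}{\xi}\partial_r[\kappa_\eps(\xi/r-c)]$; since the remaining factors (the other kernel, the domain $[-R,R]$) do not depend on $r$, one pulls $\partial_r$ outside the $\xi$-integral and integrates it by parts in the $r$-variable onto $\phi$, with no boundary term since $\phi$ is compactly supported in $\R^+$ and $r^2\phi$ vanishes at $r=0$. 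The net effect is to replace $\kappa_\eps'(\cdot/r_j-c)$ by the operation $\partial_{r_j}(r_j^2\,\cdot\,)$ acting on $\phi$, together with an extra $\xi^{-1}$ weight inside the $\xi$-integral, which is harmless since the surviving $\delta$- and $\chi$-factors are supported away from $\xi=0$ (here $i\ge1$). After applying \emph{(iii)} to every $\delta'$-factor, only $\chi$- and $\delta$-factors remain in all six integrals.

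With this in hand, the three identities \eqref{chi-chi}, \eqref{chi-del}, \eqref{chi-delP} — those with at most one surviving $\delta$-factor — are immediate by dominated convergence using \emph{(i)} and \emph{(ii)}. For \eqref{chi-chi}, the inner integrand tends a.e.\ to $r_1^{-i}r_2^{-j}\xi^{i+j}\chi_{(-m,m)}(\xi)$ with $m=\min(r_1,r_2,R)$, and $\int_{-m}^m\xi^{i+j}\,d\xi=\tfrac{1-(-1)^{i+j+1}}{i+j+1}m^{i+j+1}$. For \eqref{chi-del}, the $\delta$-factor concentrates $\xi$ at $(-1)^a r_2$, where the $\chi$-factor evaluates to $\bigl((-1)^a r_2/r_1\bigr)^i\chi_{\{r_2<r_1\}}$, and the constraint $r_2<R$ produces the cutoff at $\min(r_1,R)$. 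For \eqref{chi-delP}, one first applies \emph{(iii)} to the $\delta'$-factor — which moves $\partial_{r_2}(r_2^2\,\cdot\,)$ onto $\phi$ — and then runs the same computation on the residual integrand $\xi^{-1}(\kappa_\eps*\xi^i\chi_{(-1,1)})(\xi/r_1)\,\kappa_\eps(\xi/r_2-(-1)^a)$; the resulting $\bigl((-1)^a r_2/r_1\bigr)^i$ and the $\partial_{r_2}(r_2^2\phi)$ assemble into the stated formula.

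The real work is in the three diagonal identities \eqref{del-del}, \eqref{delP-delP}, \eqref{del-delP}. After step \emph{(iii)} all three reduce to the limit, tested against $\phi$ and its derivatives, of $\int_{-R}^R \kappa_\eps(\xi/r_1-(-1)^a)\,\kappa_\eps(\xi/r_2-(-1)^b)\,w(\xi)\,d\xi$, where $w(\xi)$ equals $1$, $\xi^{-1}$ or $\xi^{-2}$ according to the number of $\delta'$-factors removed. When $a\ne b$, the two mollifiers are centered (in $\xi$) at $(-1)^a r_1$ and $(-1)^b r_2$, which have opposite signs; for $\eps$ small their supports are disjoint, the integral vanishes, and this is the Kronecker $\delta_{ab}$. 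When $a=b$, overlap forces $|r_1-r_2|\lesssim\eps(r_1+r_2)$ and $r_1,r_2<R+O(\eps)$, so one zooms into the diagonal: substituting $r_1=r_2(1+\eps t)$ in the outer variables and $\xi=r_2\bigl((-1)^a+\eps s\bigr)$ in the inner one, and using $\iint_{\R^2}\kappa(s)\kappa(t+s)\,ds\,dt=1$, the expression converges, as a measure in $(r_1,r_2)$, to the diagonal measure $\xi\mapsto\xi^2\,w\bigl((-1)^a\xi\bigr)$ supported on the diagonal $\{r_1=r_2=\xi\}$ for $0<\xi<R$. Taking $w=1$ gives \eqref{del-del} directly. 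For \eqref{delP-delP} and \eqref{del-delP} one combines this diagonal measure with the $\partial_{r_j}(r_j^2\,\cdot\,)$-operations from step \emph{(iii)}: the weights $w=\xi^{-2}$, $\xi^{-1}$ cancel part of the $\xi^2$ Jacobian and supply the $(-1)^a$ prefactor via $w\bigl((-1)^a\xi\bigr)$, and the $\partial_{r_j}(r_j^2\,\cdot\,)$'s land on $\phi$, giving $\int_0^R[\partial_x\partial_y(x^2y^2\phi)]|_{x=y=\xi}\,d\xi$ and $(-1)^a\int_0^R[\partial_y(xy^2\phi)]|_{x=y=\xi}\,d\xi$ respectively.

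The main obstacle is the diagonal-zoom step: one must justify rigorously that the product of two mollifiers, evaluated at the $r$-rescaled points $(-1)^a r_1$ and $(-1)^a r_2$, converges weakly in $(r_1,r_2)$ to the stated diagonal measure — this needs the above change of variables together with uniform-in-$\eps$ bounds coming from the compact support of $\kappa$ to legitimize dominated convergence after the integrations by parts. One also has to check that the $\xi$-domain boundary $\xi=\pm R$ contributes nothing: after step \emph{(iii)} every surviving integrand is only $O(\eps^{-1})$ large on a near-diagonal strip of width $O(\eps)$, so its contribution from the $O(\eps^2)$-measure corner near $r_1=r_2=R$ is $O(\eps)$ and vanishes in the limit — which is precisely why it is essential to remove the $\delta'$-factors \emph{before}, rather than after, passing to the limit.
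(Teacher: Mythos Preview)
Your approach is correct and shares its essential ingredient with the paper's: the identity $\kappa_\eps'(\xi/r-c)=-\tfrac{r^2}{\xi}\partial_r\kappa_\eps(\xi/r-c)$ followed by integration by parts in $r$, which reduces every $\delta'$-factor to a $\delta$-factor acting on $\partial_r(r^2\phi)$, is exactly the paper's device in its proof of \eqref{delP-delP}. Your organization --- isolating the three mechanisms \emph{(i)}--\emph{(iii)} up front and then running all six cases through them --- is more streamlined than the paper's case-by-case treatment, but the underlying analysis coincides.

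The one place where the executions genuinely diverge is the two-$\delta$ computation. The paper changes variables $(r_1,r_2)\mapsto(\theta_1,\theta_2)=(\xi/r_1,\xi/r_2)$ for fixed $\xi$, which turns both factors into standard approximate identities $\kappa_\eps(\theta_i-(-1)^a)$ and makes the concentration at $\theta_i=(-1)^a$ (hence $r_i=(-1)^a\xi$) transparent; the Jacobian $\xi^2/(\theta_1^2\theta_2^2)$ supplies the $\xi^2$ in the answer. Your diagonal-zoom substitution $r_1=r_2(1+\eps t)$, $\xi=r_2((-1)^a+\eps s)$ is a different but equally valid parametrization of the same concentration set; it has the advantage of making the weak convergence to the diagonal measure explicit as a statement about $(r_1,r_2)$, at the cost of a slightly more involved Jacobian bookkeeping. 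Both routes yield the same result with comparable effort.

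One small remark: your justification that the $\xi^{-1}$ weight is harmless ``since the surviving $\delta$- and $\chi$-factors are supported away from $\xi=0$ (here $i\ge1$)'' is correct but the parenthetical is misleading --- in every case where a $\xi^{-1}$ or $\xi^{-2}$ appears, it is the surviving $\delta$-factor (centered at $(-1)^a r$ with $r\ge r_*>0$) that keeps $\xi$ away from $0$, not the power $i$ on the $\chi$-factor. The paper handles this by an explicit $|\xi|\ge\delta$ cutoff removed at the end, which amounts to the same observation.
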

  \begin{proof}
   We first compute the convolution to obtain
  \begin{align*}
  [\kappa_{\eps}*\xi^i \chi_{(-1,1)}](\frac{\xi}{r})& 
  =  \int_{-1}^1   \ke(r^{-1}\xi-y) y^i \,dy = \int_{-1}^1   \frac{1}{\eps}\kappa(\frac{r^{-1}\xi-y}{\eps}) y^i \,dy
\\ 
  [\kappa_{\eps}*\delta_{(-1)^a}](\frac{\xi}{r})&
  = \ke(r^{-1}\xi-(-1)^a)
 \\
  [\kappa_{\eps}*\delta_{(-1)^a}'](\frac{\xi}{r})&
  =  \ke'(r^{-1}\xi-(-1)^a )
  \end{align*}
Notice that for $\eps >0$ fixed, if we let $h$ represent any of 
   the functions above, then $h$ is a Schwartz as a function of $\xi$. I.e., 
 for any $r>0$
  \begin{equation}\label{schwartz}\sup_{\xi\in \R}|\xi^\alpha \partial_{\xi}^\beta h(\xi)|\leq C_{\alpha,\beta}(r), \quad \forall \alpha,\beta\in \Z^+\end{equation}
  with  $C_{\alpha,\beta}(r)$ being continuous functions in $r$.
  
  Considering any test function $\phi(r_1, r_2)$, we may assume that 
   \begin{align}\label{phi-support}\supp \phi\subset [r_*,r^*]\times [r_*,r^*]\quad\quad  0<r_* < r^* <\infty\end{align}
From Fubini's theorem, (\ref{schwartz}) guarantees that we can interchange the order of 
integration in all our computations below. 
We shall now prove (\ref{chi-chi})-(\ref{del-delP}) for test functions satisfying (\ref{phi-support}). By taking $r_*$ and $r^*$ to be any value in $(0,\infty)$, this will imply that  (\ref{chi-chi})-(\ref{del-delP}) hold true for any test functions.
            
 \textbf{Proof of (\ref{chi-chi}):}  By the change of variables $y_1 =r_1^{-1}\xi- \theta_1\eps, y_2 =r_2^{-1}\xi- \theta_2\eps$ one has 
   \begin{align*}
   I_{\eps}=& \int_{\R^+\times\R^+}\left[\int_{-R}^R\int_{-1}^1\int_{-1}^1  y_1^i y_2^j \ke(  r_1^{-1}\xi-y_1  ) \ke( r_2^{-1}\xi-y_2) dy_1 dy_2 d\xi \right]\phi(r_1, r_2) \, dr_1dr_2\\  
   =&  \int_{\R^5}\chi_{\eps}(\xi,\theta_1,\theta_2,r_1,r_2) (r_1^{-1}\xi- \theta_1\eps)^i (r_2^{-1}\xi- \theta_2\eps)^j \kappa(\theta_1)\kappa(\theta_2) \phi(r_1,r_2)\, d\xi d\theta_1 d\theta_2 dr_1 dr_2
   \end{align*}
     where $\chi_{\eps}(\xi,\theta_1,\theta_2,r_1,r_2)$ is the characteristic function of the set 
     \[\Omega_{\eps}:=\left\{(\xi,\theta_1,\theta_2,r_1,r_2)  \;\Bigg |\; \xi\in [-R,R], \quad  \frac{(r_i^{-1}\xi-1)}{\eps}\leq \theta_i \leq
\frac{(r_i^{-1}\xi+1)}{\eps} , \quad r_1, r_2\in [r_*,r^*]
  \right\}
  \]
    To proceed, we define the following sets:
      \begin{align*}
      \Omega\,\,\,\,\,&:=\{ 
      -\min{(r_1, r_2, R)}<\xi < \min{(r_1, r_2, R)},\quad  \theta_1,\theta_2 \in \R\quad r_1,r_2\in [r_*,r^*]\}\\
    \Omega_{\eps}^{(1)}   &:=\Omega_{\eps}\cap \Omega\\
  \Omega_{\eps}^{(2)} &: =\Omega_{\eps}\backslash \Omega_{\eps}^{(1)}
  \end{align*}
   with the corresponding characteristic function $\chi_{\Omega}, \chi_{\eps}^{(1)}, \chi_{\eps}^{(2)} $. 
   We then have 
  \begin{align*}\chi_{\eps}(\xi,\theta_1,\theta_2,r_1,r_2) &= \chi_{\eps}^{(1)}(\xi,\theta_1,\theta_2,r_1,r_2)+ \chi_{\eps}^{(2)}(\xi,\theta_1,\theta_2,r_1,r_2)\\
 \lim_{\eps\rightarrow 0}\chi_{\eps}^{(1)}(\xi,\theta_1,\theta_2,r_1,r_2) &=\chi_{\Omega}(\xi, \theta_1,\theta_2,r_1,r_2)\\
  \lim_{\eps\rightarrow 0}\chi_{\eps}^{(2)}(\xi,\theta_1,\theta_2,r_1,r_2) & =0 \quad a.e.\end{align*}
  For $0<\eps<1$, the integrand is bounded by 
  \[\chi_{[-R,R]\times\R^2}(\xi,\theta_1,\theta_2)\chi_{r_1,r_2\geq r_*}(r_1,r_2)\left|\frac{\xi}{r_1}\right|^i  \left|\frac{\xi}{r_2}\right|^j |\theta_1|^i\kappa(\theta_1)|\theta_2|^j\kappa(\theta_2)|\phi(r_1,r_2)|\]
  which is integrable on $\R^5$. Hence by the dominated convergence theorem, we obtain 
  \begin{align*}\lim_{\eps\rightarrow 0}I_{\eps}=& \int_{r_1,r_2\geq r_*}\int_{\theta_1,\theta_2\in \R} 
 \int_{-\min{(r_1, r_2, R)}}^{\min{(r_1, r_2, R)}} \frac{\xi^{i+j}}{r_1^ir_2^j}  \kappa(\theta_1)\kappa(\theta_2)  \phi(r_1,r_2)\,d\xi\, d\theta_1\,d\theta_2\, dr_1dr_2\\
 =&\int_{r_1,r_2>0}(1-(-1)^{i+j+1})\frac{\min{(r_1, r_2, R)}^{i+j+1}}{(i+j+1)r_1^ir_2^j} \phi(r_1,r_2)\, dr_1dr_2
 \end{align*}
 which yields (\ref{chi-chi}).
  
\textbf{Proof of (\ref{del-del})  : }   Given any $\eta>0$,  take $\eps<\eps_0<\frac12$, with $\eps_0$ to be chosen later. Then 
\begin{equation}\begin{aligned}\label{del-del-eps}
I_{\eps}:=&  \int_{-R}^R \int_{\R^+\times \R^+}
\ke(  r_1^{-1}\xi-(-1)^a  ) \ke( r_2^{-1}\xi-(-1)^b )
  \phi(r_1, r_2)  \,d\xi \,dr_1\, dr_2\quad\quad \text{ change }\frac{\xi}{r_i}=\theta_i\\
 =&  \int_{0}^R  \int_{\R^+\times \R^+}
\ke(  \theta_1 - (-1)^a  ) \ke( \theta_2 - (-1)^b )
  \phi(\frac{\xi}{\theta_1}, \frac{\xi}{\theta_2})   \frac{\xi^2}{\theta_1^2\theta_2^2}\,d\theta_1\, d\theta_2 d\xi\,\, \\ 
 & +  \int_{-R}^0  \int_{\R^-\times \R^-}
\ke(  \theta_1 - (-1)^a  ) \ke( \theta_2 - (-1)^b )
  \phi(\frac{\xi}{\theta_1}, \frac{\xi}{\theta_2})   \frac{\xi^2}{\theta_1^2\theta_2^2}\,d\theta_1\, d\theta_2 d\xi
     \end{aligned}\end{equation}  
    Since $\supp \ke \subset[-\eps,\eps]\subset [-\eps_0,\eps_0]$,     we  only need to consider the integration over the region 
    \begin{align*}\Omega^+ &:= [0,R]\times [1-\eps_0 , 1+\eps_0]\times [1-\eps_0 , 1+\eps_0]\\
\Omega^- &:= [-R,0]\times [-1-\eps_0 , -1+\eps_0]\times [-1-\eps_0 , -1+\eps_0]\end{align*}
Denote by $A(\xi,\theta_1,\theta_2)= \phi(\frac{\xi}{\theta_1}, \frac{\xi}{\theta_2}) \frac{\xi^2}{\theta_1^2\theta_2^2}$, which is smooth on $\Omega^{\pm}$.  We can then chose $\eps_0$ small so that 
     \[|A(\xi,\theta_1,\theta_2) - A(\xi, (-1)^a, (-1)^b)|\leq \eta \quad \forall (\xi,\theta_1,\theta_2)\in \Omega^{\pm}\]
    Hence  we obtain 
     \begin{equation}\label{eta-error}\begin{aligned}
     I_{\eps}=&  \iiint_{\Omega^+}  \ 
\ke(  \theta_1 - (-1)^a  ) \ke( \theta_2 - (-1)^b )
 A(\xi, (-1)^a, (-1)^b)\,d\theta_1\, d\theta_2 d\xi\,\, \\ 
 & +  \iiint_{\Omega^-}
\ke(  \theta_1 - (-1)^a  ) \ke( \theta_2 - (-1)^b )
 A(\xi, (-1)^a, (-1)^b)\,d\theta_1\, d\theta_2 d\xi\,\, \\ 
 & + O(\eta)
     \end{aligned}\end{equation}
     with $O(\cdot)$ independent of $\eps$ for any $\eps<\eps_0$.
     When $a=b=0$, we have $A(\xi, 1,1)=\phi(\xi,\xi)\xi^2$.  Since $\phi$ is compactly supported in $\R^+\times \R^+$ , we see that the first integral in (\ref{eta-error}) equals 
     \[\int_0^R \phi(\xi,\xi)\xi^2 d\xi\]
     while the second integral in (\ref{eta-error}) vanishes due to the fact that $\theta_i-1$ lies outside of the support of~$\ke$.
               
                   If $a=b=1$, then $A(\xi, 1,1)=\phi(-\xi,-\xi)\xi^2$.  Thus the first integral in (\ref{eta-error}) vanishes, while the second integral equals
 \[\int_{-R}^0 \phi(-\xi,-\xi)\xi^2 d\xi=\int_0^R \phi(\xi,\xi)\xi^2d\xi.\]
 When $a=0, b=1$ or $a=1, b=0$, it is easy to see both integrals in (\ref{eta-error}) vanish.
  To summarize, for $a,b\in \{0,1\}$, and for all  $ \eta>0$, we can find $\eps_0>0$ such that    
 \[I_{\eps}=\int_0^R \phi(\xi,\xi)\xi^2d\xi + O(\eta), \quad \quad \text{ for } 0<\eps <\eps_0\]
which implies (\ref{del-del}).

\textbf{Proof of  (\ref{delP-delP}) :}     For any $\eta>0$, take $\eps <\eps_0 <\frac12$ with $\eps_0$ to be chosen later. Now 
for   fixed  $\eps$, the dominated convergence theorem implies that 
\begin{align*}
I_{\eps}:&=\int_{-R}^R\int_{\R^+\times \R^+}  \ke'( r_1^{-1}\xi-(-1)^a )  \ke'( r_2^{-1}\xi-(-1)^b )\phi(r_1, r_2)dr_1dr_2d\xi  \\
&= \lim_{\delta\rightarrow 0+}\int_{\delta\leq |\xi|\leq R}\int_{\R^+\times \R^+} \ke'( r_1^{-1}\xi-(-1)^a )  \ke'( r_2^{-1}\xi-(-1)^b )\phi(r_1, r_2)\,dr_1dr_2d\xi  
\end{align*}   
We denote the integral term here as $I_{\eps, \delta}$. 

Using the fact $\partial_r \ke(r^{-1}\xi-(-1)^b )= -\frac{\xi}{ r^2}\ke'( r^{-1}\xi-(-1)^b )$, we may rewrite $I_{\eps, \delta}$ and integrate by parts with respect to $r_1, r_2$: 
\begin{align*}
I_{\eps, \delta}&=\int_{\delta\leq |\xi|\leq R}\int_{\R^+\times \R^+}   \frac{r_1^2r_2^2}{\xi^2}\partial_{r_1} \ke(r_1^{-1}\xi-(-1)^b)\partial_{r_2} \ke(r_2^{-1}\xi-(-1)^b)
 \phi(r_1, r_2)dr_1dr_2d\xi  \\
 &= \int_{\delta\leq |\xi|\leq R}\int_{\R^+\times \R^+} \frac{1}{ \xi^2 } \ke( r_1^{-1}\xi-(-1)^a ) \ke( r_2^{-1}\xi-(-1)^b )\partial_{r_1}\partial_{r_2}(r_1^2r_2^2\phi(r_1,r_2))\,dr_1dr_2d\xi
\end{align*}
We now set $\Psi(r_1, r_2)=\partial_{r_1}\partial_{r_2}(r_1^2r_2^2\phi(r_1,r_2))$ and    change  variables $\frac{\xi}{r_1}=\theta_1, \frac{\xi}{r_2}=\theta_2$ in $I_{\eps,\delta}$ to obtain 
\begin{align*}
I_{\eps,\delta}=&\int_{\delta\leq \xi\leq R}\int_{\R^+\times \R^+}  \ke( \theta_1-(-1)^a ) \ke( \theta_2-(-1)^b )\frac{1}{\theta_1^2\theta_2^2} \Psi(\frac{\xi}{\theta_1},\frac{\xi}{\theta_2})\,d\theta_1d\theta_2d\xi\\
&+ \int_{-R\leq \xi\leq -\delta }\int_{\R^-\times \R^-}  \ke( \theta_1-(-1)^a ) \ke( \theta_2-(-1)^b )\frac{1}{\theta_1^2\theta_2^2} \Psi(\frac{\xi}{\theta_1},\frac{\xi}{\theta_2})\,d\theta_1d\theta_2d\xi\\
=& I^+_{\eps,\delta} + I^-_{\eps,\delta}
\end{align*}
We denote $A(\xi, \theta_1,\theta_2)=\frac{1}{\theta_1^2\theta_2^2} \Psi(\frac{\xi}{\theta_1},\frac{\xi}{\theta_2})$. We infer from   $\supp (\ke)\subset[-\eps_0, \eps_0]$ and $ \supp(\phi)\subset [r_*, r^*]\times [r_*, r^*]$   that  the region of integration for $I^{\pm}_{\eps, \delta}$ can be taken as 
\begin{align*}
\Omega^+ &:=[(1-\eps_0)r_*, (1+\eps_0)r^*]\times [1-\eps_0, 1+\eps_0]\times [1-\eps_0, 1+\eps_0]\\
\Omega^- &:=[(-1-\eps_0)r_*, (-1+\eps_0)r^*]\times [-1-\eps_0, -1+\eps_0]\times [-1-\eps_0, -1+\eps_0]
\end{align*}
Since $A(\xi,\theta_1,\theta_2)$ is smooth on $\Omega^{\pm}$, we can apply the dominated convergence theorem to get 
\begin{align*}
I_{\eps}=&\lim_{\delta\rightarrow 0} I_{\eps, \delta}\\
=&\int_{\Omega^+}  \ke( \theta_1-(-1)^a ) \ke( \theta_2-(-1)^b )A(\xi,\theta_1,\theta_2)\,d\theta_1d\theta_2d\xi\\
&+ \int_{\Omega^-}  \ke( \theta_1-(-1)^a ) \ke( \theta_2-(-1)^b )A(\xi,\theta_1,\theta_2)\,d\theta_1d\theta_2d\xi
\end{align*}
As before, we can  take $\eps_0$ small so that 
\[|A(\xi,\theta_1,\theta_2)- A(\xi, (-1)^a, (-1)^b)|\leq \eta,\quad\quad \forall (\xi,\theta_1,\theta_2)\in \Omega^{\pm}\]
Hence 
\begin{equation}\label{eta-error-2}\begin{aligned}
I_{\eps} 
=&\int_{\Omega^+}  \ke( \theta_1-(-1)^a ) \ke( \theta_2-(-1)^b )A(\xi, (-1)^a, (-1)^b)\,d\theta_1d\theta_2d\xi\\
&+ \int_{\Omega^-}  \ke( \theta_1-(-1)^a ) \ke( \theta_2-(-1)^b )A(\xi, (-1)^a, (-1)^b)\,d\theta_1d\theta_2d\xi\\
&+ O(\eta)
\end{aligned}\end{equation}
If  $a=b=0$, then $A(\xi, (-1)^a, (-1)^b)= \partial_x\partial_y (x^2y^2\phi(x,y))\big|_{x=y=\xi}$, so the first integral in (\ref{eta-error-2}) equals 
\[\int_0^R \partial_x\partial_y (x^2y^2\phi(x,y))\big|_{x=y=\xi}d\xi\]
and the second integral in (\ref{eta-error-2}) vanishes. 

When $a=b=1$, $A(\xi, (-1)^a, (-1)^b)= \partial_x\partial_y (x^2y^2\phi(x,y))\big|_{x=y=-\xi}$, so the first integral in (\ref{eta-error-2}) is $0$ while the second integral equals 
\[\int_{-R}^0 \partial_x\partial_y (x^2y^2\phi(x,y))\big|_{x=y=-\xi}d\xi=\int_0^R \partial_x\partial_y (x^2y^2\phi(x,y))\big|_{x=y=\xi}d\xi.\]

When $a=0, b=1$ or $a=1, b=0$ we see that both integrals in   (\ref{eta-error-2}) vanish. 
To summarize,  for $a,b\in \{0,1\}$, for any $\eta>0$, we find $\eps_0>0$ so that 
\[I_{\eps}=\int_0^R \partial_x\partial_y (x^2y^2\phi(x,y))\big|_{x=y=\xi}d\xi  + O(\eta) \quad\quad \forall 0<\eps<\eps_0\]
which implies  (\ref{delP-delP}).

 \textbf{Proof of    (\ref{chi-del})  :}    By the change of variables $y_1=r_1^{-1}\xi -\theta_1\eps $, $r_2=\frac{\xi}{(-1)^a+\eps\theta_2}$,  we obtain 
 \begin{align*}
 I_{\eps} = & \iint_{r_1,r_2>0}\left[\int_{-R}^R\int_{-1}^1  y^i\ke(r_1^{-1}\xi-y) dy \,\ke(r_2^{-1}\xi- (-1)^a) d\xi  
  \right]  \phi(r_1,r_2)dr_1dr_2\\
  =&  \iiiint_{\R^4}\chi_\eps(\xi,r_1,\theta_1,\theta_2) (r_1^{-1}\xi -\theta_1\eps)^i\kappa(\theta_1)\kappa(\theta_2)  \phi(r_1,\frac{\xi}{(-1)^a+\eps\theta_2})\frac{\xi}{((-1)^a+\eps\theta_2)^2} \,d\theta_1d\theta_2 dr_1 d\xi  
 \end{align*}
 with $\chi_{\eps}$ being the characteristic function of the following set $\Omega_{\eps}$ (we can first take $0<\eps <\frac12$)
 \begin{equation}
 \Omega_{\eps}:=\left\{ (\xi,r_1, \theta_1,\theta_2)\in \R^4  \; \scalebox{2}{\Bigg |} \; 
 \begin{aligned} &\xi\in [-R,R],\quad\quad r_*\leq r_1\leq r^*, \\
&-1\leq \theta_1\leq 1,  \,\,\frac{1}{\eps} (\frac{\xi}{r_1}-1)\leq \theta_1 \leq\frac{1}{\eps} (\frac{\xi}{r_1}+1)\\
 & -1\leq \theta_2\leq 1, \,\,r_*\leq   \frac{\xi}{(-1)^a+\eps\theta_2}\leq r^*
 \end{aligned}\right\}
 \end{equation}

For $a=0$, we define the following regions
\begin{align*}
\Omega\quad\!:&=\{ (\xi,r_1, \theta_1,\theta_2)\in \R^4  \mid r_* <\xi < \min(r^*, r_1,R),\quad r_1\in [r_*,r^*], \quad \theta_1,\theta_2\in [-1,1]\}\\
\Omega_{\eps}^{(1)}:&=\Omega_{\eps}\cap  \Omega \\
 \Omega_{\eps}^{(2)}:& = \Omega_{\eps}\backslash\Omega_{\eps}^{(1)}
\end{align*}
with the corresponding characteristic functions $\chi_{\Omega}, \chi_{\eps}^{(1)}, \chi_{\eps}^{(2)}$. Hence $\chi_{\eps}=\chi_{\eps}^{(1)} + \chi_{\eps}^{(2)}$ and we have 
 \begin{align*}\lim_{\eps\rightarrow 0}\chi_{\eps}^{(1)}(\xi,r_1, \theta_1,\theta_2) &= \chi_{\Omega}(\xi, r_1,r_2,\theta_1,\theta_2)\\
 \lim_{\eps\rightarrow 0}\chi_{\eps}^{(2)}(\xi,r_1, \theta_1,\theta_2) &=0 \quad\quad a.e. \end{align*}
 Also notice that when $0<\eps<\frac12$, the integrand in $I_{\eps}$ is bounded by 
 \[C\,\chi_{[-R,R]}(\xi)\chi_{[-1,1]}(\theta_1)\chi_{[-1,1]}(\theta_2)\chi_{[r_*,r^*]}(\xi) \]
 with a constant $C$ which does not depend on~$\eps$ (but which may depend on the support of~$\phi$).  Hence we can apply the dominated convergence theorem to conclude that 
 \[\lim_{\eps\rightarrow 0}I_{\eps} =   \int_{r_*}^{r^*} \int_{0}^{\min(r_1,r^*, R)} \xi   (\frac{\xi}{r_1} )^i    \phi(r_1,\xi ) \, d\xi dr_1   = \int_{0}^\infty \int_{0}^{\min(r_1,R)} \xi   (\frac{\xi}{r_1} )^i    \phi(r_1,\xi ) \, d\xi dr_1. \]

If $a=1$, then we  set 
  \[\widetilde{\Omega}:=\{ (\xi,r_1, \theta_1,\theta_2)\in \R^4  \mid  -\min(r^*, r_1,R)<\xi <-r_*,\quad r_1\in [r_*,r^*], \quad \theta_1,\theta_2\in [-1,1]\}\]
  and use it to define $\widetilde{\Omega}_{\eps}^{(1)}$, $\widetilde{\Omega}_{\eps}^{(2)}$. We can now repeat the previous arguments to obtain 
   \[\lim_{\eps\rightarrow 0}I_{\eps} =   \int_{r_*}^{r^*} \int_{-\min(r_1,r^*, R)}^{-r_*} (-\xi)   (\frac{\xi}{r_1} )^i    \phi(r_1,-\xi ) \, d\xi dr_1   = \int_{0}^\infty \int_{0}^{\min(r_1,R)} \xi   (\frac{-\xi}{r_1} )^i    \phi(r_1,\xi ) \, d\xi dr_1 \]
Hence we have proved~(\ref{chi-del}).
  
 The proofs for
(\ref{chi-delP}),  
(\ref{del-delP}) are analogous  to those of   (\ref{chi-del}),   (\ref{delP-delP}),
 and  we omit the details. 
  \end{proof}

  
\textbf{Proof of Theorem~\ref{CoreThm} for  d=3: } 
Now we have  $n=0$, $\varphi_0(z)=\sin z$, hence  $$\hp_0(\xi)= \pi i [\delta_{-1}(\xi) -\delta_1(\xi)].$$ Using (\ref{del-del}) with test function $\phi(r_1,r_2)=  \frac12 g(r_1) \ol{ g(r_2)}  $, we see that 
 \eqref{g3'} with $d=3$  yields
\begin{align}
AS_3(g) =& 4\pi^3\int_0^\infty   | {g}(r)|^2 r^{2}\, dr \nonumber \\ &- 4\pi \lim_{\eps\rightarrow 0}  \iint_{r_1,r_2>0}\frac{\pi^2}{2r_1r_2}\left[\int_{-R}^R   [\kappa_{\eps}*(\delta_{-1} -\delta_1)](\frac{\xi}{r_1})[\kappa_{\eps}*(\delta_{-1}   -\delta_1)](\frac{\xi}{r_2})d\xi \right]
 g(r_1) \ol{ g(r_2)} \,  r_1 r_2\, dr_1 dr_2 \nonumber\\
= & 4\pi^3 \left[\int_0^\I |g(r)|^2\, r^2\, dr - \int_0^R |g(r)|^2 r^2dr\right] \nonumber\\
=&4\pi^3  \int_R^\I |g(r)|^2\, r^2\, dr \label{3dg}
\end{align}
Next,  we analyze the contributions from $f$. Since $\psi_0(z) =z\psi_0(z),$   we can compute 
\[\calF{\psi}_0 =i\partial_{\xi}\hp_0 =\pi i [\delta_{-1}'(\xi) -\delta_1'(\xi)]\]
Using (\ref{delP-delP}) and the expression for $AS_d(f)$ (\ref{f3'}) with $d=3$, we get
\begin{align}
{AS}_3(f) =& 4\pi^3 \int_0^\I |f'(r)|^2 r^2\, dr \nonumber \\ & - 4\pi \lim_{\eps\rightarrow 0}  \iint_{r_1,r_2>0} \frac{\pi^2}{2r_1^2r_2^2}\left[\int_{-R}^R [\kappa_{\eps}*(\delta_{-1}'  -\delta_1')](\frac{\xi}{r_1})[\kappa_{\eps}*(\delta_{-1}'  -\delta_1')](\frac{\xi}{r_2})d\xi\right]\,  f(r_1) \ol{ f(r_2)} \,  r_1 r_2\, dr_1 dr_2 \nonumber \\
=& 4\pi^3 \int_0^\I |f'(r)|^2 r^2\, dr - 4\pi^3 \int_0^R |(rf(r))'|^2\, dr \nonumber \\
=& 4\pi^3 (\int_R^\I |rf'(r)|^2\,   dr - |f(R)|^2R)\label{3df}
\end{align}
Combining the $f$-contribution (\ref{3df})  with the $g$-contribution (\ref{3dg}), and also taking (\ref{Asfg}) into account,   we arrive at the known fact  
\EQ{\nn
\max_{\pm} \lim_{t\to\pm\I} \int_{r\ge |t|+R} |\nabla_{t,r}\, u(t,r)|^2 \, r^2\, dr \ge   \frac12\Big [ \int_{r\ge R} \big( |rf'(r)|^2 + |rg(r)|^2 \big)\, dr -|f(R)|^2R \Big]
}
see (\ref{3d:form1}) and also (\ref{3d:estimate}).

\textbf{Proof of Theorem~\ref{CoreThm} for d=5:}
If $d=5$ then $n=1$ and  we have
\begin{equation}\label{5d-psi}
\hp_1 ={\pi}[ \chi_{(-1,1)}  - (\delta_{-1}+\delta_1)]
\end{equation}
We first consider the case with initial data $(0,g)$.
Since we may assume that $g(r)=0$ for $r\le R$ by approximation,  
when we plug (\ref{5d-psi}) into  (\ref{g3'}),  the $\delta$-function does not contribute in the computation. Hence using 
 (\ref{chi-chi})   we obtain 
\begin{align}
AS_5(g)   =& 2^{4}\pi^{5} \int_0^\infty |g(r)|^2 r^{4}\,dr \nonumber\\  &
- 2^4\pi^3 \lim_{\eps\rightarrow 0}  \iint_{r_1,r_2>0} \frac{\pi^2}{2r_1r_2} \left[\int_{-R}^R
 [\kappa_{\eps}*\chi_{(-1,1)}](\xi/r_1)  {[\kappa_{\eps}*\chi_{(-1,1)}](\xi/r_2) }
\right] g(r_1)\overline{g(r_2)} r_1^2r_2^2 \, dr_1 dr_2\nonumber \\
=&2^{4}\pi^{5}\left[ \int_R^\infty |g(r)|^2 r^{4}\,dr 
- R    \iint_{r_1,r_2>R} 
 g(r_1)\overline{g(r_2)} r_1r_2 \, dr_1 dr_2\right] \nonumber\\
=& 2^4\pi^5\Big\| g-  \frac{ \langle g, r^{-3}\rangle_{L^2(r\geq R;r^4dr)} }{\langle r^{-3}, r^{-3}\rangle_{L^2(r\geq R;r^4dr)}} r^{-3} \Big\|_{L^2(r\geq R;r^4dr)}^2\label{g5}\end{align}
 
Finally, we consider the case $d=5$ with initial data $(f, 0)$.  In that case one has 
\[\calF{\psi}_1 =i\partial_{\xi}\hp_1 = i{\pi} [-\delta_1 +\delta_{-1} -   (\delta'_{-1}+\delta'_1)]\]
From (\ref{del-del}), (\ref{delP-delP}), (\ref{del-delP})  in Lemma~\ref{delta-com},  and in view of the interchangeability of $r_1, r_2$, 
we conclude that 
\begin{align} \notag
AS_5(f) = 
& 2^{4}\pi^{5}\int_0^\infty   |{f'}(r)|^2 r^{4}\, dr \\ \notag &-  2^4\pi^3 \lim_{\eps\rightarrow 0} \iint_{r_1,r_2>0}\left[\frac{1}{2r_1^2r_2^2}\int_{-R}^R [\kappa_{\eps}*\calF{\psi}_n](\frac{\xi}{r_1})\overline{[\kappa_{\eps}*\calF{\psi}_n](\frac{\xi}{r_2})}d\xi\right]f(r_1) \ol{ f(r_2)}   (r_1 r_2)^{2}\, dr_1 dr_2 
\\
\notag =
&2^{4}\pi^{5}\int_0^\infty   |{f'}(r)|^2 r^{4}\, dr \\ \notag &- \frac12 2^4\pi^5\lim_{\eps\rightarrow 0}  \iint_{r_1,r_2>0} 
\left[\int_{-R}^R [\kappa_{\eps}*(-\delta_1 +\delta_{-1})](\frac{\xi}{r_1}) {[\kappa_{\eps}*(-\delta_1 +\delta_{-1})](\frac{\xi}{r_2})} \,d\xi\right] f(r_1) \ol{ f(r_2)}   \, dr_1 dr_2 
\\ \notag &- \frac12 2^4\pi^5 \lim_{\eps\rightarrow 0} \iint_{r_1,r_2>0} 
\left[\int_{-R}^R [\kappa_{\eps}*(\delta'_{-1}+\delta'_1)](\frac{\xi}{r_1}) {[\kappa_{\eps}*(\delta'_{-1}+\delta'_1)](\frac{\xi}{r_2})} \,d\xi\right] f(r_1) \ol{ f(r_2)}   \, dr_1 dr_2 \\ \notag &+  2^4\pi^5  {\Re}\lim_{\eps\rightarrow 0} \iint_{r_1,r_2>0} 
\left[\int_{-R}^R [\kappa_{\eps}*(-\delta_1 +\delta_{-1})](\frac{\xi}{r_1}) {[\kappa_{\eps}*(\delta'_{-1}+\delta'_1)](\frac{\xi}{r_2})} \,d\xi\right] f(r_1) \ol{ f(r_2)}   \, dr_1 dr_2 
\\
 \notag
=& 2^4 \pi^5\int_0^\infty   |f'(r)|^2 r^4\, dr - 2^{4} \pi^5  \int_0^R |f(r)|^2 r^2 \, dr\\ \notag
& -   2^{4} \pi^5  \int_0^R  |\p_r(r^2 f(r))|^2  \, dr  - 2^5 \pi^5 \Re\int_0^R r \overline{f(r)}  \p_r(r^2 f(r)) \, dr  \\ \notag
=& 2^4 \pi^5 \left(\int_R^{\infty} | \p_r( r^2 f(r))|^2 \, dr + 2 \int_R^{\infty} |f(r)|^2 \, r^2 \, dr - R^3|f(R)|^2\right)\\
=& 2^4 \pi^5 \left( \int_R^{\infty} |f'(r)|^2 \, r^4 \, dr - 3R^3 |f(R)|^2\right)  \label{5d f case}
\end{align}
Here $\Re(z)$ means the real part of $z\in \mathbb{C}$.
We claim from the  expression~\eqref{5d f case} that
\EQ{\label{5df}
AS_5(f)=2^4\pi^5\Big\| f' - \frac{\langle f', f_{0}'\rangle_{L^{2}(r\geq R;r^{4}\,dr)}}{ \|f_{0}'\|_{L^{2}(r\geq R;r^{4}\,dr)}^{2}} f_{0}' \Big\|^{2}_{L^{2}(r\geq R;r^{4}\,dr)}
}
where $f_{0}( r):=r^{-3}$. Indeed,
\[
\langle f', f_{0}'\rangle_{L^{2}(r\geq R;r^{4}\,dr)} = 3f( R), \quad \|f_{0}'\|^{2}_{L^{2}(r\geq R;r^{4}\,dr)}
= 3R^{-3}
\]
whence the right-hand side of  \eqref{5df} equals
\begin{align*}
& 2^4\pi^5\|f'\|^{2}_{L^{2}(r\geq R;r^{4}\,dr)} - \langle f', f_{0}'\rangle_{L^{2}(r\geq R;r^{4}\,dr)}^{2} \|f_{0}'\|^{-2}_{L^{2}(r\geq R;r^{4}\,dr)}\\
=& 2^4\pi^5(\|f'\|^{2}_{L^{2}(r\geq R;r^{4}\,dr)}  - 3R^{3}|f( R)|^2)\\
 =& AS_5(f).
\end{align*}
Hence combing the $f$-contribution (\ref{5d f case}) with the $g$-contribution (\ref{g5}), and invoking~(\ref{Asfg}), we have verified (\ref{extd}) for~$d=5$.

\textbf{Proof of Theorem~\ref{CoreThm} for d=7: }
Finally, we turn to the case $d=7$ with data $(0,g)$. In that case one has $$\fy_2(z)=\frac{3(\sin z-z\cos z)}{z^{2}} - \sin z$$ with
\[\hp_2(\xi) = \pi i [-3\xi \chi_{(-1,1)}(\xi) -\delta_{-1}+ \delta_1]\]
Carrying out the exact same calculations as before (as for the case $d=5$, we can still assume $g=0$ for $r\leq R$ to simplify the calculation) yields that \eqref{g3'} is
\begin{align}
AS_7(g)= & 2^6\pi^7  \int_0^\infty |g(r)|^2 r^6\, dr \nonumber \\ &- 2^6\pi^5\lim_{\eps\rightarrow 0} \iint_{r_1,r_2>0}
\frac{9\pi^2}{2r_1r_2} \left[\int_{-R}^R
 [\kappa_{\eps}*\xi\chi_{(-1,1)}](\xi/r_1) {[\kappa_{\eps}*\xi\chi_{(-1,1)}](\xi/r_2) }
\right] g(r_1)\overline{g(r_2)} r_1^3r_2^3 \, dr_1 dr_2 \notag
\\ = &
 2^6\pi^7  \left[\int_R^\infty |g(r)|^2 r^6\, dr - {3R^{3}}    \int_R^\infty g(r) r \, dr\, \int_R^\infty \overline{ g(r)} r \, dr\right]
\end{align}
which again 
can be written as
\begin{equation}
\label{7dg}
AS_7(g)=2^6\pi^7 \Big\|g-\frac{ \langle g, r^{-5}\rangle_{L^2(r\geq R;r^6dr)} }{\langle r^{-5}, r^{-5}\rangle_{L^2(r\geq R;r^6dr)}} r^{-5} \Big\|_{L^2(r\geq R;r^6dr)}^2
\end{equation}
The computation for data $(f,0)$ is more complicated, but it reveals the type of calculations needed for the general higher-dimensional case. Using that 
\[\calF{\psi}_2(\xi)=i\partial_{\xi}\hp_2 =\pi[3\chi_{(-1,1)}(\xi)-3(\delta_{-1}+\delta_1) +(\delta_{-1}'-\delta_1')]\]
and invoking (\ref{f3'}) and Lemma~\ref{delta-com}, we conclude that   (notice the interchangeability of $r_1,r_2$ in the expression of exterior energy, we will simplify the calculation by combining the symmetric terms)
\begin{align*} &\frac{1}{2^6\pi^7 }   {AS}_7(f) \\
= & \int_0^\infty |f'(r)|^2 r^6dr -  \lim_{\eps\rightarrow 0}\iint_{r_1,r_2>0} \left[\frac{1}{2\pi^2 r_1^2r_2^2}\int_{-R}^R[\kappa_{\eps}*\calF{\psi}_2](\frac{\xi}{r_1})  [\kappa_{\eps}*\calF{\psi}_2](\frac{\xi}{r_2})\,d\xi\right] f(r_1)\overline{f(r_2)}r_1^3r_2^3 \, dr_1dr_2\\
=& \int_0^\infty |f'(r)|^2 r^6dr \\
 & - \frac92\lim_{\eps\rightarrow 0}\iint_{r_1,r_2>0} \left[\int_{-R}^R[\kappa_{\eps}*\chi_{(-1,1)}](\frac{\xi}{r_1})  [\kappa_{\eps}*\chi_{(-1,1)}](\frac{\xi}{r_2})\,d\xi\right] f(r_1)\overline{f(r_2)}r_1r_2 \, dr_1dr_2\\
 & - \frac92\lim_{\eps\rightarrow 0}\iint_{r_1,r_2>0} \left[\int_{-R}^R[\kappa_{\eps}*(\delta_{-1}+\delta_1)](\frac{\xi}{r_1})  [\kappa_{\eps}*(\delta_{-1}+\delta_1)](\frac{\xi}{r_2})\,d\xi\right] f(r_1)\overline{f(r_2)}r_1r_2 \, dr_1dr_2\\
 & - \frac12\lim_{\eps\rightarrow 0}\iint_{r_1,r_2>0} \left[\int_{-R}^R[\kappa_{\eps}*(\delta_{-1}'-\delta'_1)](\frac{\xi}{r_1})  [\kappa_{\eps}*(\delta'_{-1}-\delta'_1)](\frac{\xi}{r_2})\,d\xi\right] f(r_1)\overline{f(r_2)}r_1r_2 \, dr_1dr_2\\
& +  9 \,\Re\lim_{\eps\rightarrow 0}\iint_{r_1,r_2>0} \left[\int_{-R}^R[\kappa_{\eps}*\chi_{(-1,1)}](\frac{\xi}{r_1})  [\kappa_{\eps}*(\delta_{-1}+\delta_1)](\frac{\xi}{r_2})\,d\xi\right] f(r_1)\overline{f(r_2)}r_1r_2 \, dr_1dr_2\\
& - 3 \,\Re\lim_{\eps\rightarrow 0}\iint_{r_1,r_2>0} \left[\int_{-R}^R[\kappa_{\eps}*\chi_{(-1,1)}](\frac{\xi}{r_1})  [\kappa_{\eps}*(\delta'_{-1}-\delta'_1)](\frac{\xi}{r_2})\,d\xi\right] f(r_1)\overline{f(r_2)}r_1r_2 \, dr_1dr_2\\
& + 3\, \Re\lim_{\eps\rightarrow 0}\iint_{r_1,r_2>0} \left[\int_{-R}^R[\kappa_{\eps}*(\delta_{-1}+\delta_1)](\frac{\xi}{r_1})  [\kappa_{\eps}*(\delta'_{-1}-\delta'_1)](\frac{\xi}{r_2})\,d\xi\right] f(r_1)\overline{f(r_2)}r_1r_2 \, dr_1dr_2\\
=& \int_0^\infty |f'(r)|^2 r^6dr  -\int_0^\infty\!\!\int_{0}^\infty  {9 f(r_1)r_1\overline{f(r_2)}r_2\min{(R,r_1,r_2)}} dr_1dr_2\\
&  - \int_0^R  \left(9 |f|^2r^4+ |\partial_r(fr^3)|^2 \right) dr 
+ 18\, \Re\int_0^\infty\int_0^{\min{(r_1,R)})} f(r_1)r_1 \overline{f(r_2)}r_2^2\,dr_1dr_2 \\
& + 6\,\Re \int_0^\infty\int_0^{\min{(r_1,R)})} f(r_1)r_1 \partial_{r_2}(\overline{f(r_2)}r_2^3)\,dr_1dr_2 - 6 \Re
\int_0^R fr^2\overline{\partial_r(fr^3)}    dr 
\\
= & \int_0^\infty |f'(r)|^2 r^6\, dr  -18\,\Re [\int_{0<r_1 < r_2 <R}  \!\!\!+\int_{0<r_1 < R <r_2}\!\!\!  +\int_{0< R<r_1<r_2} f(r_1)r_1 \overline{f(r_2)}r_2 \min{(r_1,r_2, R)}\,dr_1 dr_2]
\\
&- \int_0^R(  |f'|^2r^6 +  12\Re(f\overline{f'}r^5) + 36 |f|^2r^4)\, dr   \\
& + 18\, \Re\int_0^\infty f(r_1)r_1 \int_0^{\min(r_1,R)}\overline{f(r_2)}r_2^2\, dr_2+ 6 \,\Re\int_0^\infty  f(r_1)r_1 \overline{f(\min(r_1, R))} \min(r_1, R)^3\, dr_1 
\end{align*}
 As before, we may assume that $f(r)=f(R), r\leq R$ by approximation.
Since $$ 12\Re f\overline{f'} r^5 +30f^2r^4=6(|f|^2r^5)' ,$$  we obtain the asymptotics
\begin{equation}\begin{aligned} &\frac{1}{2^6\pi^7 }  {AS}_7(f) \\
 = & \int_R^\infty |f'(r)|^2 r^6\, dr
 -[\frac65 |f(R)|^2R^5 + 6\Re  (f(R)R^3\int_R^\infty \overline{ f(r)}r\, dr  ) + 9|\int_R^\infty f(r)r\, dr |^2 ]  \\
&- [6|f(R)|^2R^5 +\frac65 |f(R)|^2R^5]    + \frac{12}{5}   |f(R)|^2R^5+ 12 \Re(\overline{f(R)}R^3\int_R^\infty  {f(r)}r\, dr )\\
=&\int_R^\infty |f'(r)|^2 r^6dr - 5|f(R)|^2R^5 - R\left|[3\int_R^\infty f(r)r\, dr  -f(R)R^2]\right|^2
\end{aligned}\label{7df}
\end{equation}
It is easy to check (\ref{7df}) vanishes for $f_0=r^{-5}$ and $f_1=r^{-3} $.

To write it in terms of projection, let us find an orthogonal basis for
\[\tilde{W}:=\mathrm{span} \{r^{-5}, r^{-3}|r\geq R\}\]
Compute the following inner products in $\dot{H}^1(r\geq R;r^6dr)$ \begin{align*}
 \langle f_0, f_0\rangle &=\frac{5}{R^5}\hspace{1cm}
 \langle f_1, f_0\rangle =\frac{5}{R^3}\hspace{1cm}
 \langle f_1, f_1\rangle =\frac{9}{R}\\ \langle f, f_0\rangle&= 5f(R)
\hspace{1cm}
  \langle f, f_1\rangle=\int_R^\infty -3r^2f' \, dr =3f(R)R^2 +6\int_R^\infty r f\, dr
 \end{align*}
Now let us apply Gram-Schmidt process   to get orthogonal vectors $\tilde{f}_0 , \tilde{f}_1$
 \[\tilde{f}_0 =f_0, \hspace{1cm}\tilde{f}_1 = f_1 -\frac{ \langle f_1, f_0\rangle}{ \langle f_0, f_0\rangle}f_0\]
The orthogonal projection onto the complement of $W$ in $\dot{H}^1(r>R; r^6dr )$ is
 \[\pi_{\tilde{W}}^\perp f = f -\frac{ \langle f, \tilde{f}_0\rangle}{ \langle \tilde{f}_0, \tilde{f}_0\rangle}\tilde{f}_0  - \frac{ \langle f, \tilde{f}_1\rangle}{ \langle \tilde{f}_1, \tilde{f}_1\rangle}\tilde{f}_1\]
with norm
\begin{equation}\label{7dPr}
 \begin{aligned}\|\pi_{\tilde{W}}^\perp f\|^2 =& \|f\|^2-  \frac{{ \langle f, \tilde{f}_0\rangle}^2}{ \langle \tilde{f}_0,\tilde{f}_0\rangle}  -\frac{{ \langle f, \tilde{f}_1\rangle}^2 }{ \langle \tilde{f}_1,\tilde{f}_1\rangle}\\
 =&  \|f\|^2 -\frac{{ \langle f, f_0\rangle}^2}{ \langle f_0, f_0\rangle}   - \frac{( \langle f,f_1\rangle \langle f_0,f_0\rangle - \langle f,f_0\rangle \langle f_1, f_0\rangle)^2}{( \langle f_1,f_1\rangle \langle f_0,f_0\rangle - \langle f_1, f_0\rangle^2) \langle f_0,f_0\rangle}
 \\
 = &\int_R^\infty |f'(r)|^2 r^6dr - 5|f(R)|^2R^5 - R\left|[3\int_R^\infty f(r)r\, dr  -f(R)R^2]\right|^2
  \end{aligned}\end{equation}
 Comparing (\ref{7df}) with (\ref{7dPr}), and combining it with the $g-$contribution (\ref{7dg}), we finally obtain
 \begin{equation}
\max_{\pm}\lim_{t\rightarrow \pm\infty}   \int_{r\geq |t|+R}^\infty |\nabla_{x,t}u(t,r)|^2r^6\, dr \geq \frac{1}{2}\|\pi_{P(R)}^\perp (f,g)\|_{\dot{H}^1\times L^2 (r\geq R; r^6\,dr)}^2
 \end{equation}
 with \[P(R)=\mathrm{span}\{(r^{-5},0),  (r^{-3},0),  (0, r^{-5})\mid  r\geq R\}\]


\section{Higher odd dimensions  }
We now turn to the  proof of Theorem~\ref{CoreThm} in any  odd dimension.
The arguments are of course analogous to those of the previous section, albeit more involved. 
From Lemma~\ref{lem:core}, we need to  compute the Fourier transform for $\fy_n=zj_n(z)$ and $\psi_n(z)=z^2j_n(z)=z\fy_n(z)$, $n=(d-3)/2$. Notice the relation $\hps_n(\xi)=i\partial_{\xi}\hp_n(\xi)$,  so our first goal is to 
derive the expression for~$\hp_n(\xi)$.

\subsection{Fourier Expansion Formula}
The spherical Bessel function $j_n(z)$ satisfies  the recurrence relations for $n\in \mathbb{Z}$, see~\cite{AS}
\begin{align}(2n+1)\frac{j_n(z)}{z} &=j_{n-1}(z) +j_{n+1}(z) \\
 j_n(z)'&=j_{n-1}(z)-\frac{n+1}{z}j_{n}(z)\end{align}
 For $\varphi_n(z)=zj_n(z)$,  using the above relations, we obtain
\[\varphi_{n+1}(z) =  \frac{n+1}{n}\varphi_{n-1}(z)- \frac{2n+1}{n}\varphi_n(z)'   \]
whence
\begin{equation}\label{inductive}
\hp_{n+1}(\xi) =  \frac{n+1}{n}\hp_{n-1}(\xi)- i\xi\frac{2n+1}{n}\hp_n(\xi)
\end{equation}
Also recalling (\ref{Bessel}), we can  easily compute $\hp_0, \hp_1$ and then using (\ref{inductive}), we can calculate   the first few terms. 
\begin{lem} \label{base-lemma}
\begin{equation}\label{base-formula}\begin{aligned}
\hp_0(\xi) &= \pi i [\delta_{-1}-\delta_1],
\\
\hp_1(\xi) &= \pi  [\chi_{(-1,1)}(\xi)- \delta_{-1}-\delta_1],\\
\hp_2(\xi) &= \pi i [-3\xi \chi_{(-1,1)}(\xi) -\delta_{-1}+ \delta_1],\\
\hp_3(\xi) &= \pi  [(-\frac{15}{2}\xi^2 +\frac{3}{2})\chi_{(-1,1)}(\xi) + (\delta_1 +\delta_{-1})],  \\
\hp_4(\xi) &= \pi i [(\frac{35}{2}\xi^3 -\frac{15}{2}\xi)\chi_{(-1,1)} +  (-\delta_1 +\delta_{-1})].
 \end{aligned}\end{equation}
\end{lem}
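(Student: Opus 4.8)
The plan is to pin down the two base cases $\hp_0$ and $\hp_1$ by a direct computation, and then to propagate the formula up to $\hp_2,\hp_3,\hp_4$ by applying the recursion \eqref{inductive}. Before anything else I would record that, since $j_n$ is regular at $0$ and $j_n(z)=O(z^{-1})$ at infinity, each $\varphi_n(z)=zj_n(z)$ is bounded on $\R$ and hence a tempered distribution, so every $\hp_n$ is a well-defined element of $\calS'(\R)$ and all the manipulations below (multiplying by the polynomial $\xi$, passing to Fourier transforms of derivatives) are legitimate there.

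For the base cases I would use \eqref{Bessel}, which gives $\varphi_0(z)=\sin z$ and $\varphi_1(z)=\frac{\sin z}{z}-\cos z$. With the convention \eqref{Fourier} specialized to $\R$ one has $\calF[e^{iaz}]=2\pi\delta_a$, so that $\calF[\sin z]=\pi i(\delta_{-1}-\delta_1)$ and $\calF[\cos z]=\pi(\delta_{-1}+\delta_1)$, while $\calF[\frac{\sin z}{z}]=\pi\chi_{(-1,1)}$ by \eqref{Fsin} with $a=1$; adding these up yields the asserted expressions for $\hp_0$ and $\hp_1$.

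For the inductive step I would first note that with the convention \eqref{Fourier} one has $\calF[\varphi']=i\xi\,\calF\varphi$, so \eqref{inductive} --- itself the Fourier transform of $\varphi_{n+1}=\frac{n+1}{n}\varphi_{n-1}-\frac{2n+1}{n}\varphi_n'$, which follows from the recurrences for $j_n$ quoted just above the lemma --- may be used in the form $\hp_{n+1}=\frac{n+1}{n}\hp_{n-1}-\frac{2n+1}{n}\,i\xi\,\hp_n$ with $n=1,2,3$. The class of distributions of the shape $p(\xi)\chi_{(-1,1)}(\xi)+\alpha\delta_{-1}+\beta\delta_1$ with $p$ a polynomial is stable under this recursion, because multiplication by $\xi$ acts as ordinary multiplication on $\xi^k\chi_{(-1,1)}(\xi)$, raising the degree by one, and as $\xi\delta_{\pm1}=\pm\delta_{\pm1}$ on the Dirac masses; this already explains the shape of the answer, and carrying out the purely arithmetic bookkeeping of the coefficients for $n=1,2,3$ produces $\hp_2,\hp_3,\hp_4$ exactly as stated. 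There is no genuine obstacle here: once the identities $\calF[\varphi']=i\xi\calF\varphi$ and $\xi\delta_{\pm1}=\pm\delta_{\pm1}$ are in place the computation is entirely mechanical, and the only thing to watch is keeping the rational coefficients of the polynomial parts correct as $n$ grows.
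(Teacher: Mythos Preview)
Your proposal is correct and follows exactly the approach indicated in the paper: compute $\hp_0$ and $\hp_1$ directly from \eqref{Bessel} and \eqref{Fsin}, then apply the recursion \eqref{inductive} with $n=1,2,3$. The paper itself offers no more detail than this, so your write-up is in fact a faithful expansion of what is only sketched there.
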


Using Lemma~\ref{base-lemma} and (\ref{inductive}) we establish the following fact. 

\begin{lem}
   \label{phi-expansion}
    Let $k\geq 1$.
 With $n=2k$ even
     \begin{equation}\label{evenC}\hp_{n}= \hp_{2k}=\pi i (-1)^k\Big[  \sum_{j=1}^k \frac{\prod_{\ell=1}^k (4k+3-2j-2\ell)}{\prod_{1\leq \ell\leq  k, \ell\not= j} (2\ell-2j)} \xi^{2k-2j+1} \chi_{(-1,1)}(\xi) +(-\delta_1 + \delta_{-1})\Big] \end{equation}
          we denote
          \[c_j= \frac{\prod_{\ell=1}^k (4k+3-2j-2\ell)}{\prod_{1\leq \ell\leq  k, \ell\not= j} (2\ell-2j)}\]
      With $n=2k-1$ odd
   \begin{equation}\label{oddC}\hp_{n}= \hp_{2k-1}=\pi (-1)^{k-1} \Big[\sum_{j=1}^k \frac{\prod_{\ell=1}^k (4k+1-2j-2\ell)}{\prod_{1\leq \ell\leq  k, \ell\not= j} (2\ell-2j)}\xi^{2k-2j} \chi_{(-1,1)}(\xi) + (-\delta_1 - \delta_{-1})\Big]\end{equation}
   we also denote \[ c_j=\frac{\prod_{\ell=1}^k (4k+1-2j-2\ell)}{\prod_{1\leq \ell\leq  k, \ell\not= j} (2\ell-2j)}  \]
   We take the convention that $\prod_{1\leq \ell\leq  k, \ell\not= j} (2\ell-2j)=1$ when $j=k=1$.
\end{lem}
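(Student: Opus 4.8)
The plan is to argue by induction on $k$, using the three–term recursion \eqref{inductive} together with the explicit base cases collected in Lemma~\ref{base-lemma}. The structural feature that drives the induction is that every $\hp_n$ is of the form
\[
\hp_n(\xi) \;=\; c_n\Big[\, Q_n(\xi)\,\chi_{(-1,1)}(\xi) \;+\; \gamma_n^{+}\,\delta_1 + \gamma_n^{-}\,\delta_{-1}\,\Big],
\]
with $c_n\in\{\pm\pi,\pm\pi i\}$, with $Q_n$ a polynomial whose parity is opposite to that of $n$, and with $(\gamma_n^{+},\gamma_n^{-})$ proportional to $(-1,1)$ when $n$ is even and to $(-1,-1)$ when $n$ is odd. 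This shape is preserved by \eqref{inductive}: multiplication by $\xi$ maps polynomials times $\chi_{(-1,1)}$ to polynomials times $\chi_{(-1,1)}$, and obeys $\xi\,\delta_{\pm1}=\pm\,\delta_{\pm1}$. Hence at each inductive step it suffices to track the two pieces, the $\delta$-part and the polynomial part, separately, and to do this once for the even-to-odd step ($n=2k$) and once for the odd-to-even step ($n=2k-1$).

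The $\delta$-part is immediate. Writing \eqref{inductive} as $\hp_{n+1}=\tfrac{n+1}{n}\hp_{n-1}-\tfrac{2n+1}{n}\,(i\xi\,\hp_n)$ and using $i\xi\,\delta_{\pm1}=\pm i\,\delta_{\pm1}$, one sees that the two contributions to the $\delta$-part of $\hp_{n+1}$ are proportional to the same pair of Dirac masses, and their rational weights combine through the elementary identity $(n+1)-(2n+1)=-n$. This single sign flip is exactly what turns $(-1)^{k}$ into $(-1)^{k+1}$ (resp.\ $(-1)^{k-1}$ into $(-1)^{k}$) and reproduces the $\delta$-terms asserted in \eqref{evenC}--\eqref{oddC}.

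For the polynomial part, substitute the inductive expansions of $Q_{n-1}$ and $Q_n$ into \eqref{inductive} and match coefficients of each monomial $\xi^{m}$. Since $i\xi\,\hp_n$ shifts the degree up by one while $\hp_{n-1}$ does not, the new coefficients $c_j$ satisfy a two-term relation at the extreme powers and a three-term relation $c_j=\lambda\,\tilde a_{j-1}+\mu\,\tilde b_j$ (with $\lambda,\mu$ the rational weights from \eqref{inductive}) at the intermediate powers, where $\tilde a,\tilde b$ are the coefficients supplied by the inductive hypothesis. To verify these against the claimed closed forms one rewrites the numerator $\prod_{\ell=1}^k(\cdots)$ as a ratio of double factorials (a product of $k$ consecutive odd integers) and the denominator $\prod_{1\le\ell\le k,\,\ell\ne j}(2\ell-2j)$ as $2^{k-1}(-1)^{j-1}(j-1)!\,(k-j)!$. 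The numerators of the three coefficients entering any one identity then differ only by a single linear factor (in the even-to-odd step, $A=B\,(2k+1-2j)$ and $C=B\,(4k+1-2j)$ in the evident notation), so after cancelling the common factor the identity collapses to a quadratic identity in $k$ and $j$ — for example $(2k-1)(4k+1-2j)=8k^2-2k-1-4kj+2j$ — which is verified by inspection; the boundary cases $j=1$ and $j=k+1$ reduce likewise to one-line factorial manipulations. The main obstacle is thus purely organizational: keeping the bookkeeping of the products $\prod_\ell(\cdots)$ and the case split (top power / bottom power / interior powers) transparent, and correctly carrying the $i$-powers and signs through \eqref{inductive}.

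Finally, I note that the form of the answer is no accident, which also furnishes an alternative route: from the integral representation $j_n(z)=\tfrac{1}{2i^n}\int_{-1}^1 P_n(t)\,e^{izt}\,dt$, integrating by parts once in $t$ and using $P_n(\pm1)=(\pm1)^n$ exhibits $\varphi_n(z)=zj_n(z)$ as a combination of $e^{\pm iz}$ and $\int_{-1}^1 P_n'(t)e^{izt}\,dt$, so that $\hp_n$ equals, up to normalization, a combination of $\delta_{\pm1}$ together with $P_n'(\xi)\chi_{(-1,1)}(\xi)$; formulas \eqref{evenC}--\eqref{oddC} are then the classical explicit expansion of the derivative of the Legendre polynomial $P_n$. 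Carrying out the induction above nonetheless keeps the proof self-contained within the paper's framework.
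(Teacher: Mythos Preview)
Your plan is correct and is essentially the paper's own argument: induction on $k$ via the recursion \eqref{inductive}, separating the $\delta$-contributions from the polynomial-times-$\chi_{(-1,1)}$ contributions, with two inductive steps (even-to-odd and odd-to-even). The paper's execution of the polynomial matching is organized slightly differently from your outline: rather than rewriting the coefficients through double factorials and reducing to a quadratic identity in $k,j$, the paper shifts indices ($j'=j+1$, $\ell'=\ell+1$) to bring the two products to a common shape, then multiplies numerator and denominator by the missing linear factor in each, and finally checks the three rational identities
\[
\frac{2-2j'}{4k+3-2j'}\cdot\frac{2k+1}{2k}\Big|_{j'=k+1}=-1,\qquad
\frac{2k+2-2j'}{4k+3-2j'}\cdot\frac{4k+1}{2k}\Big|_{j'=1}=1,\qquad
\frac{2k+1}{2k}\cdot\frac{2-2j'}{4k+3-2j'}-\frac{4k+1}{2k}\cdot\frac{2k+2-2j'}{4k+3-2j'}=-1,
\]
together with their odd-to-even analogues. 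This is the same computation you describe, just packaged via index relabeling instead of double-factorial bookkeeping.

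Your closing remark about the Legendre-polynomial representation is a genuinely additional observation not present in the paper. The identity $j_n(z)=\tfrac{1}{2i^n}\int_{-1}^1 P_n(t)e^{izt}\,dt$ together with one integration by parts indeed yields $\hp_n(\xi)=\pi i^{-(n+1)}\big[-P_n'(\xi)\chi_{(-1,1)}(\xi)+\delta_1-(-1)^n\delta_{-1}\big]$, from which \eqref{evenC}--\eqref{oddC} follow immediately from the known monomial expansion of $P_n'$. This route bypasses the recursion entirely and explains the particular product form of the $c_j$; it is more conceptual, at the cost of importing the explicit Legendre coefficients from outside the paper.
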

\begin{nb}\label{restate-c}  
Recalling the relation $d=2n+3$, we can unify the formulas of $c_j$ in both cases
in terms of dimension $d$. 
Indeed, by denoting $k=[\frac{d}{4}]$ we have 
\[ c_j=\frac{\prod_{\ell=1}^k (d-2j-2\ell)}{\prod_{1\leq \ell\leq  k, \ell\not= j} (2\ell-2j)},\quad  \quad 1\leq j\leq k\]
and we can also 
 restate the lemma as  for any $d\geq 3$ odd,
\[\hp_{\frac{d-3}{2}}= \pi (-1)^{\frac{d-3}{2}}i^{\frac{d-1}{2}} \Big[\sum_{j=1}^k  c_j\xi^{\frac{d-1}{2}-2j} \chi_{(-1,1)}(\xi) + (-\delta_1 + (-1)^{\frac{d-3}{2}}\delta_{-1})\Big]\]
\end{nb}
\begin{nb} The motivation for obtaining the formulae in  this lemma is  to first assume an expansion with undetermined coefficients $c_j$,  then to compute the asymptotic formula (\ref{g3'}) using this expansion, and finally  compare the outcome with the formula (\ref{gProjection}) to find what the coefficients in Lemma~\ref{phi-expansion} should be for our result to hold.
\end{nb}

\begin{proof}
We proceed by induction:

\textit{Base Step: } It is trivial to verify the case $k=1$, so we start with $k=2$ instead:
  \begin{align*}\hp_{3} &= \pi  (-1)\Big[(\frac{(9-2-2)(9-4-2)}{(4-2)}\xi^2 +\frac{(9-2-4)(9-4-4)}{(2-4)})\chi_{(-1,1)}(\xi)+ (-\delta_{1}-\delta_{-1})\Big]\\
& = \pi (-1) \Big[(\frac{15}{2}\xi^2 -\frac{3}{2})\chi_{(-1,1)}(\xi) + (-\delta_{1}-\delta_{-1})\Big]
\end{align*}
\begin{align*}\hp_{4} &= \pi i \Big [(\frac{(11-2-2)(11-4-2)}{(4-2)}\xi^3 +\frac{(11-2-4)(11-4-4)}{(2-4)}\xi)\chi_{(-1,1)}(\xi) +(-\delta_{1}+\delta_{-1})\Big]\\
& = \pi i \Big[(\frac{35}{2}\xi^3 -\frac{15}{2}\xi)\chi_{(-1,1)}(\xi)+ (-\delta_{1}+\delta_{-1})\Big]
\end{align*}
These expressions both match the computation we carried out in lemma~\ref{base-lemma}.

 \textit{Inductive Step:} Now we assume the formulas (\ref{evenC}), (\ref{oddC}) to be true for all $m\leq n$
 and  we wish to establish their validity  for $n+1$. We consider two cases:

{\bf  Case 1:} $n=2k$ even,  and we need to prove the formula for $n=2k+1$.  Now (\ref{inductive}) reads
\begin{equation}\label{induction}\hp_{2k+1}= \frac{2k+1}{2k} \hp_{2k-1} -\frac{4k+1}{2k}i\xi \hp_{2k}(\xi)\end{equation}
Plugging  $\hp_{2k-1} $ and $\hp_{2k} $ into (\ref{induction}), we see that 
\begin{align*}
\hp_{2k+1}  = &\pi(-1)^{k-1}  \frac{2k+1}{2k} \left[ \sum_{j=1}^k \frac{\prod_{\ell=1}^k (4k+1 -2j-2\ell)}{\prod_{\ell=1, \ell\not=j}^k(2\ell-2j)}\xi^{2k-2j}\chi_{(-1,1)}(\xi) + (-\delta_{1}-\delta_{-1})\right] \\
 &+  \pi (-1)^{k}\frac{4k+1}{2k} \left[ \sum_{j=1}^k \frac{\prod_{\ell=1}^k (4k+3 -2j-2\ell)}{\prod_{\ell=1, \ell\not=j}^k(2\ell-2j)}\xi^{2k-2j+2}\chi_{(-1,1)}(\xi) +\xi(-\delta_{1}+\delta_{-1})\right]\\
= &  \pi(-1)^{k-1}  \frac{2k+1}{2k} \left[ \sum_{j'=2}^{k+1} \frac{\prod_{\ell'=2}^{k+1} (4k+5 -2j'-2\ell')}{\prod_{\ell'=2, \ell'\not=j'}^{k+1}(2\ell'-2j')}\xi^{2(k+1)-2j'}\chi_{(-1,1)}(\xi) \right] \mbox{ Here } j'=j+1, \ell'=\ell+1\\
 &+  \pi (-1)^{k}\frac{4k+1}{2k} \left[ \sum_{j'=1}^k \frac{\prod_{\ell'=2}^{k+1} (4k+5 -2j'-2\ell')}{\prod_{\ell=1, \ell\not=j'}^{k }(2\ell-2j')}\xi^{2(k+1)-2j'}\chi_{(-1,1)}(\xi) \right] \mbox{  Here }j'=j, \ell'=\ell+1\\
&+\pi(-1)^k  (-\delta_{1}-\delta_{-1}).
\end{align*}
 Next we obtain the desired terms by multiplying extra terms in both numerator and denominator. Hence we obtain
 \begin{align*}
&\hp_{2k+1}  \\= &  \pi(-1)^{k-1}  \frac{2k+1}{2k} \left[ \sum_{j'=2}^{k+1} \frac{\prod_{\ell'=1}^{k+1} (4k+5 -2j'-2\ell')}{\prod_{\ell'=1, \ell'\not=j'}^{k+1}(2\ell'-2j')} \frac{2-2j'}{4k+3 -2j' }\xi^{2(k+1)-2j'}\chi_{(-1,1)}(\xi)\right]  \\
& +  \pi (-1)^{k}\frac{4k+1}{2k} \left[ \sum_{j'=1}^k \frac{\prod_{\ell'=1}^{k+1} (4k+5 -2j'-2\ell')}{\prod_{\ell=1, \ell\not=j}^{k+1 }(2\ell-2j')} \frac{2k+2-2j}{4k+3 -2j' }\xi^{2(k+1)-2j'}\chi_{(-1,1)}(\xi)\right]  \\
 &+\pi(-1)^k  (-\delta_{1}-\delta_{-1})\\
=& \pi (-1)^{k-1} \left[ \frac{\prod_{\ell'=1}^{k+1} (4k+5 -2j'-2\ell')}{\prod_{\ell'=1, \ell'\not=j'}^{k+1}(2\ell'-2j')} \frac{2-2j'}{4k+3 -2j' }\frac{2k+1}{2k}\xi^{2(k+1)-2j'} \chi_{(-1,1)}(\xi)\right]\Big |_{j'=k+1} \\
&+  \pi (-1)^{k}\left[\frac{\prod_{\ell'=1}^{k+1} (4k+5 -2j'-2\ell')}{\prod_{\ell=1, \ell\not=j'}^{k+1 }(2\ell-2j')}\frac{2k+2-2j'}{4k+3 -2j' }\frac{4k+1}{2k}\xi^{2(k+1)-2j}  \chi_{(-1,1)}(\xi)\right] \Big |_{j'=1}
\\ &+  \pi (-1)^{k-1}  \sum_{j'=2}^k \left[\frac{\prod_{\ell'=1}^{k+1} (4k+5 -2j'-2\ell')}{\prod_{\ell=1, \ell\not=j'}^{k+1 }(2\ell-2j')}
\xi^{2(k+1)-2j}\chi_{(-1,1)}(\xi)
   (\frac{2k+1}{2k} \frac{ 2-2j'}{4k+3 -2j' } -\frac{4k+1}{2k}\frac{2k+2-2j'}{4k+3 -2j' })
\right]\\
  &+\pi(-1)^k  (-\delta_{1}-\delta_{-1})
\end{align*}
Notice the following facts
\begin{align*}  \frac{2-2j'}{4k+3 -2j' }\frac{2k+1}{2k} &=-1 \mbox { when } j'=k+1
\\ \frac{2k+2-2j'}{4k+3 -2j' }\frac{4k+1}{2k}& =1\mbox{ when }j'=1
 \\ \frac{2k+1}{2k} \frac{ 2-2j'}{4k+3 -2j' } -\frac{4k+1}{2k}\frac{2k+2-2j'}{4k+3 -2j' } &= - 1\end{align*}
 Hence we finally get  the expected formula
\[\hp_{2k+1} =\pi (-1)^k\Big[\sum \frac{\prod_{\ell=1}^{k+1}(4k+5 -2j-2\ell)}{\prod_{\ell =1, \ell\not = j}^{k+1}(2\ell-2j)} \xi^{2k+2-2j}\chi_{(-1,1)}(\xi) + (-\delta_{1}-\delta_{-1})\Big]\]

{\bf Case 2:} If $n=2k-1$ is odd,  then we need to prove formula for $n=2k$. Now (\ref{inductive}) reads
 \[\hp_{2k}= \frac{2k}{2k-1} \hp_{2k-2} -\frac{4k-1}{2k-1}i\xi \hp_{2k-1}(\xi)\]
 By plugging in the formula for $\hp_{2k-2}$ and $\hp_{2k-1}$ carefully, we obtain
 \begin{align*}
\hp_{2k}  = &\pi i (-1)^{k-1}  \frac{2k}{2k-1} \left[ \sum_{j=1}^{k-1} \frac{\prod_{\ell=1}^{k-1} (4k-1 -2j-2\ell)}{\prod_{\ell=1, \ell\not=j}^{k-1}(2\ell-2j)}\xi^{2k-1-2j}\chi_{(-1,1)}(\xi) + (-\delta_{1}+\delta_{-1}) \right] \\
& +  \pi i (-1)^{k}\frac{4k-1}{2k-1} \left[ \sum_{j=1}^{k} \frac{\prod_{\ell=1}^{k} (4k+1 -2j-2\ell)}{\prod_{\ell=1, \ell\not=j}^{k}(2\ell-2j)}\xi^{2k-2j+1}\chi_{(-1,1)}(\xi) + \xi(-\delta_{1}-\delta_{-1}) \right]\\
= &  \pi i (-1)^{k-1} \frac{2k}{2k-1}  \left[ \sum_{j'=2}^{k} \frac{\prod_{\ell'=2}^{k} (4k+3 -2j'-2\ell')}{\prod_{\ell'=2, \ell'\not=j'}^{k}(2\ell'-2j')}\xi^{2k-2j'+1}\chi_{(-1,1)}(\xi) \right] \mbox{ Here } j'=j+1, \ell'=\ell+1\\
& +  \pi i (-1)^{k}\frac{4k-1}{2k-1}\left[ \sum_{j'=1}^{k} \frac{\prod_{\ell'=2}^{k+1} (4k+3 -2j'-2\ell')}{\prod_{\ell=1, \ell\not=j'}^{k }(2\ell-2j')}\xi^{2k-2j'+1}\chi_{(-1,1)}(\xi) \right] \mbox{  Here }j'=j, \ell'=\ell+1\\
& + \pi i (-1)^{k} (-\delta_{1}+\delta_{-1})
\end{align*}
 Again, we artificially introduce the terms we wish to have, by multiplying extra terms in numerator and denominator. Hence we obtain
 \begin{align*}
\hp_{2k}   = &  \pi i (-1)^{k-1} \frac{2k}{2k-1} \left[ \sum_{j'=2}^{k} \frac{\prod_{\ell'=1}^{k} (4k+3 -2j'-2\ell')}{\prod_{\ell'=1, \ell'\not=j'}^{k}(2\ell'-2j')} \frac{2-2j'}{4k+1 -2j' }\xi^{2k-2j' +1}\chi_{(-1,1)}(\xi)\right]  \\
& +  \pi i (-1)^{k}\frac{4k-1}{2k-1}\left[ \sum_{j'=1}^{k} \frac{\prod_{\ell'=1}^{k} (4k+3 -2j'-2\ell')}{\prod_{\ell=1, \ell\not=j}^{k }(2\ell-2j')} \frac{4k +3 -2j'-2(k+1)}{4k+1 -2j' }\xi^{2k-2j' +1}\chi_{(-1,1)}(\xi)\right]  \\
& + \pi i (-1)^{k} (-\delta_{1}+\delta_{-1})\\
=&  \pi (-1)^{k-1}  \sum_{j'=2}^{k-1} \left[\frac{\prod_{\ell'=1}^{k} (4k+3 -2j'-2\ell')}{\prod_{\ell=1, \ell\not=j'}^{k }(2\ell-2j')} \xi^{2k-2j'+1}\chi_{(-1,1)}(\xi)
 (\frac{2k}{2k-1} \frac{ 2-2j'}{4k+1 -2j' } -\frac{4k-1}{2k-1}\frac{2k+1 -2j'}{4k+1 -2j' })\right]
 \\
 &+ \pi (-1)^{k}\frac{4k-1}{2k-1}\left[  \frac{\prod_{\ell'=1}^{k} (4k+3 -2j'-2\ell')}{\prod_{\ell=1, \ell\not=j}^{k }(2\ell-2j')} \frac{2k +1 -2j' }{4k+1 -2j' }\xi^{2k-2j' +1}\chi_{(-1,1)}(\xi)\right]\Big |_{j'=1}
 \\& + \pi i (-1)^{k} (-\delta_{1}+\delta_{-1})
\end{align*}
Notice the fact that 
\begin{align*}\frac{2k}{2k-1} \frac{ 2-2j'}{4k+1 -2j' } -\frac{4k-1}{2k-1}\frac{2k+1 -2j'}{4k+1 -2j' } & = - 1\\
 \frac{4k-1}{2k-1}\frac{2k+1-2j'}{4k+1 -2j' } & =  1 \mbox{ when }j'=1\end{align*}
 Hence we finally obtain the  formula
\[\hp_{2k}=\pi i (-1)^{k} \left[ \sum_{j=1}^k \frac{\prod_{\ell=1}^k (4k+3 -2j-2\ell)}{\prod_{\ell=1, \ell\not=j}^k(2\ell-2j)}\xi^{2k-2j+1}\chi_{(-1,1)}(\xi)  + (-\delta_{1}+\delta_{-1})\right]\]
as desired. 
\end{proof}

As a corollary, we can immediately obtain the Fourier expansion formula for $\hps_n(\xi)=i\partial_{\xi}\hp_n(\xi)$. We state it for $n=2k-2$ and $2k-1$, which is the version we need later.  

\begin{cor} \label{psi-expansion}
Let $k\geq 2$. 
With $n=2k-2$ we have 
    \begin{equation} \label{evenPsi}\hps_n(\xi)=\hps_{2k-2} (\xi)= \pi (-1)^k\Big[\sum_{j=1}^{k-1}c_j(2k-1-2j) \xi^{2k-2-2j}\chi_{(-1,1)}(\xi)  + \sum_{j=1}^{k-1}c_j (-\delta_1 - \delta_{-1} )+(-\delta'_1 + \delta'_{-1}) \Big ]      \end{equation}
where  \[c_j=\frac{\prod_{\ell=1}^{k-1} (4k-1-2j-2\ell)}{\prod_{1\leq \ell\leq  k-1, \ell\not= j} (2\ell-2j)}, \quad 1\leq j \leq k-1\]
  For $n=2k-1$ we have 
  \begin{equation} \label{oddPsi}\hps_n(\xi)=\hps_{2k-1} (\xi)= \pi (-1)^{k-1}i\Big[\sum_{j=1}^{k}c_j(2k-2j) \xi^{2k-1-2j}\chi_{(-1,1)}(\xi) +\sum_{j=1}^{k}c_j(-\delta_1 +\delta_{-1} )+(-\delta'_1 - \delta'_{-1}) \Big]      \end{equation} and here
  \[c_j = \frac{\prod_{\ell=1}^{k} (4k+1-2j-2\ell)}{\prod_{1\leq \ell\leq  k, \ell\not= j} (2\ell-2j)}  , \quad 1\leq j \leq k\]
 \end{cor}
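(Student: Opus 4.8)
\emph{Proof idea.} The plan is to obtain the claimed formulas simply by differentiating the expansions of Lemma~\ref{phi-expansion} term by term in the sense of distributions. The starting point is the identity $\hps_n(\xi)=i\partial_\xi\hp_n(\xi)$ recorded above: it holds because $\psi_n(z)=z\varphi_n(z)$ and because, with the Fourier convention \eqref{Fourier}, multiplication by $z$ corresponds to $i\partial_\xi$ on the Fourier side. So the entire computation reduces to applying $i\partial_\xi$ to the right-hand sides of \eqref{evenC} and \eqref{oddC} and re-collecting the resulting pieces.

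For this I will use only three elementary distributional facts: $\partial_\xi\chi_{(-1,1)}=\delta_{-1}-\delta_1$; the Leibniz rule $\partial_\xi(\xi^m g)=m\,\xi^{m-1}g+\xi^m\partial_\xi g$ for a distribution $g$; and the evaluation $\xi^m\delta_{\pm1}=(\pm1)^m\delta_{\pm1}$. Applied to a generic summand $c_j\xi^m\chi_{(-1,1)}(\xi)$ of \eqref{evenC}/\eqref{oddC}, these produce the ``bulk'' term $c_j m\,\xi^{m-1}\chi_{(-1,1)}(\xi)$ together with the boundary masses $c_j\big((-1)^m\delta_{-1}-\delta_1\big)$, while applied to the $\pm\delta_{\pm1}$ terms of \eqref{evenC}--\eqref{oddC} they produce the corresponding $\pm\delta'_{\pm1}$ terms.

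Carrying this out in the two cases is then pure bookkeeping. For $n=2k-2$ one feeds in \eqref{evenC} with $k$ replaced by $k-1$, so that the coefficients become precisely the $c_j$ in the statement and the exponents $m=2k-1-2j$ are odd; hence $(-1)^m=-1$, every summand contributes $-\delta_{-1}-\delta_1$ to the boundary, and collecting terms with the overall factor $i\cdot\pi i(-1)^{k-1}=\pi(-1)^k$ gives exactly \eqref{evenPsi}. For $n=2k-1$ one differentiates \eqref{oddC} directly; now the exponents $m=2k-2j$ are even, so $(-1)^m=1$, every summand contributes $-\delta_1+\delta_{-1}$, the overall factor is $i\cdot\pi(-1)^{k-1}=\pi(-1)^{k-1}i$, and one arrives at \eqref{oddPsi}. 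As a sanity check, $k=2$ in the even case should reproduce $\hps_2=\pi\big[3\chi_{(-1,1)}-3(\delta_{-1}+\delta_1)+(\delta'_{-1}-\delta'_1)\big]$, which is the formula for $\calF\psi_2$ already used in the $d=7$ computation above. There is essentially no obstacle here; the only points requiring care are the parity of the exponents $m$, which fixes the signs of the Dirac-mass contributions, and the accounting of the powers of $i$ coming from $\hps_n=i\partial_\xi\hp_n$.
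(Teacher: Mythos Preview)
Your proposal is correct and is essentially the same approach as the paper's: the paper simply states that the corollary follows ``immediately'' from the relation $\hps_n=i\partial_\xi\hp_n$ applied to Lemma~\ref{phi-expansion}, and you have carried out exactly this term-by-term distributional differentiation with the correct bookkeeping of parities and prefactors.
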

 
\subsection{Algebraic expression for the projection}
Before we move on to establish the asymptotics, let us state some facts from linear algebra.
\begin{enumerate}
\item
Let $a_1,\cdots, a_k$ be linearly independent vectors in some inner product space $(V, \langle ,\rangle)$, which span the subspace $W$, i.e.,
  \[W=\mathrm{span}\{a_1,\cdots a_k\}\]
  Taking any vector $u\in V$, the orthogonal projection onto $W^{\perp}$ is written as  \[\mathrm{Proj}_{W^\perp} u =u -(\lambda_1 a_1 +\cdots \lambda_k a_k) \]
with the coefficients satisfying 
\[\langle \mathrm{Proj}_{W^\perp} u, a_j\rangle =\langle   u, a_j\rangle -\sum_i \lambda_i\langle a_i, a_j\rangle=0\]
Denote by \[U=[\langle u, a_i \rangle]_{1\times k}, \hspace{0.5cm}\Lambda =[\lambda_i]_{1\times k},\hspace{0.5cm} A=[\langle a_i, a_j\rangle]_{k\times k}\]
so \[U=\Lambda A\]
Using the fact that $A$ is symmetric, invertible and positive definite, we can compute
\[\|\mathrm{Proj}_{W^\perp} u\|^2 = \langle u, u\rangle -\sum_{i,j=1}^k \lambda_i\lambda_j \langle a_i, a_j\rangle =\langle u,u \rangle -\Lambda A \Lambda^t =\langle u,u \rangle -  U A^{-1}U^t\]
Let us simplify the notation by setting $A=[a_{ij}]$, $B=A^{-1}=[b_{ij}]$,
\begin{equation}\label{proj}\|\mathrm{Proj}_{W^\perp} u\|^2 = \langle u, u\rangle -   \sum_{i,j} b_{ij} \langle u,a_i \rangle \langle u,a_j \rangle\end{equation}

\item
Now we introduce the  \textbf{Cauchy Matrix}~\cite{CauchyMatrix}, which is an $m\times m$ matrix of the form \[A=[\frac{1}{x_i-y_j}], \hspace{1cm}x_i-y_j\not=0; \hspace{0.2cm}1\leq i,j\leq m \]
Its determinant  can be computed to be 
\begin{equation}
\det(A)=\frac{\prod_{i<j} (x_i-x_j)(y_j-y_i)}{\prod_{i,j=1}^m
(x_i-y_j)}
\end{equation}
whence we conclude that the Cauchy
matrix is invertible. 
Using Cramer's rule,  we obtain an explicit formula for its
inverse
\begin{equation}\label{CauchyInverse}B=[b_{ij}]=A^{-1}\hspace{1cm}b_{ij} =(x_j-y_i) A_j(y_i)B_i(x_j)\end{equation}
 where $A_i(x)$ and $B_j(y)$ are the Lagrange polynomials for $(x_i), (y_j)$ respectively, i.e., 
 \begin{equation}\label{poly}\begin{aligned}\displaystyle A_i(x) &=\frac{\prod_{\ell\not =i} (x-x_\ell)}{\prod_{\ell\not = i}(x_i-x_\ell)} =\prod_{1\leq \ell\leq  m, \ell\not = i} \frac{x-x_\ell}{x_i-x_\ell}\\
 \displaystyle B_j(y) &= \prod_{1\leq \ell\leq  m, \ell\not = j} \frac{y-y_\ell }{y_j-y_\ell }\end{aligned}\end{equation}
 \item
 Now we compute the explicit formula for the projection   in Theorem~\ref{CoreThm}.

\noindent If we set $V=L^2(r\geq R, r^{d-1}\,dr)$ 
with $a_i=r^{2i-d}$  
\begin{equation}\label{g-w}W=\mathrm{span}\Big\{r^{2i-d}\mid i=1,\cdots k=[\frac{d}{4}]; r \geq R\Big\}
\end{equation}
  \[a_{ij}(R)=\langle r^{2i-d}, r^{2j-d}\rangle_V =\int_R^\infty r^{2i-d +2j-d}r^{d-1}\,dr =\frac{R^{2i+2j-d}}{d-2i-2j}\]
and we have a Cauchy matrix when $R=1$
\[A(1)=\Big[\frac{1}{d-2i-2j}\Big]_{k\times k}\]
Let $x_i=d-2i, y_j =2j$. Then using (\ref{CauchyInverse}), (\ref{poly}) we conclude that 
\begin{equation}\label{Bformula}\begin{aligned}
b_{ij}(1) = & (d-2i-2j)\frac{\prod_{1\le \ell\le  k, \ell\not =j} (2i+2\ell-d)}{\prod_{1\leq \ell\le  k, \ell\not =j} (2\ell-2j)}\frac{\prod_{1\le \ell\le  k, \ell\not =i} (2j+2\ell-d)}{\prod_{1\le \ell\le  k, \ell\not =i} (2\ell-2i)}\\
=&\frac{1}{d-2i-2j}\frac{\prod_{1\le \ell\le  k } (2i+2\ell-d)}{\prod_{1\leq \ell\le  k, \ell\not =j} (2\ell-2j)}\frac{\prod_{1\le \ell\le  k} (2j+2\ell-d)}{\prod_{1\le \ell\le  k, \ell\not =i} (2\ell-2i)}
\end{aligned}\end{equation}
We therefore obtain the inverse $B(R)=A(R)^{-1}$ with $b_{ij}(R)=b_{ij}(1)R^{d-2i-2j}$. Moreover,  we have established the projection formula
\begin{equation}\label{gProjection}\begin{aligned}& \|\mathrm{Proj}_{W^{\perp}}g\|^2_{L^2(r\geq R, r^{d-1}\,dr)}\\
\\
=&\int_R^\infty g^2(r) r^{d-1}\,dr -\sum_{i,j=1}^k\frac{R^{d-2i-2j}}{d-2i-2j}c_ic_j\int_R^\infty g(r)r^{2i-1}\,dr\int_R^\infty \overline{ g(r)}r^{2j-1}\,dr \end{aligned}\end{equation}
with \begin{equation}\label{ci-formula}c_j=\frac{\prod_{1\leq \ell\leq  k } (d-2j-2\ell)}{\prod_{1\leq \ell\leq  k, \ell\not =j} (2\ell-2j)}, \quad 1\leq j\leq k=[\frac{d}{4}]\end{equation}
notice the formula for $c_j$ appears in Lemma~\ref{phi-expansion}, see Remark~\ref{restate-c}.

Again letting $\widetilde{V}=\dot{H}^1(r\geq R, r^{d-1}\,dr)$, with   $\tilde{a}_i=r^{2i-d}$ one has 
 \begin{align}\label{tdw} \tilde{W}=\mathrm{span}\Big\{r^{2i-d}\mid i=1,\cdots \tdk:=[\frac{d+2}{4}]; r\geq R\Big\}\end{align}
 The  same computation as before now yields 
\[\tilde{a}_{ij}(R)=(2i-d)(2j-d)\frac{R^{2i+2j-d-2}}{d+2-2i-2j}=(2i-d)(2j-d)R^{2i+2j-d-2} \alpha_{ij}\]
We can compute the inverse of the Cauchy matrix $[\alpha_{ij}]  := [\frac{1}{d+2-2i-2j}]_{\tdk\times \tdk}$ which is
\[[\frac{1}{d+2-2i-2j}]_{\tdk\times \tdk}^{-1} =[ \frac{1}{d+2-2i-2j}\frac{\prod_{1\leq \ell \leq \tdk}(d+2-2\ell-2i)\prod_{1\leq \ell \leq \tdk}(d+2-2\ell-2j)}{\prod_{1\leq \ell\leq  \tdk, \ell\not=i}(2\ell-2i)\prod_{1\leq \ell\leq  \tdk, \ell\not=j}(2\ell-2j) }]_{\tdk\times \tdk}\]
This yields the inverse for $\tilde{A}(R)=[\tilde{a}_{ij}(R)]_{\tdk\times \tdk}$, which we denote by $\tilde{B}(R)=[\tilde{b}_{ij}(R)]_{\tdk\times \tdk}$
\[ \tilde{b}_{ij}(R)= \frac{R^{d+2-2i-2j}}{(d-2i)(d-2j)}\frac{1}{d+2-2i-2j}\frac{\prod_{1\leq \ell \leq  \tdk}(d+2-2\ell-2i)\prod_{1\leq \ell \leq \tdk}(d+2-2\ell-2j)}{\prod_{1\leq \ell\leq  \tdk, \ell\not=i}(2\ell-2i)\prod_{1\leq \ell\leq  \tdk, \ell\not=j}(2\ell-2j) }  \]
 We therefore obtain the projection formula
\begin{equation}\label{fProjection}\begin{aligned}
 &\|\mathrm{Proj}_{\tilde{W}^{\perp}}f\|^2_{\dot{H}^1(r\geq R, r^{d-1}\,dr)}\\
\\
=&\int_R^\infty |f'(r)|^2 r^{d-1}\,dr - \sum_{i,j=1}^{\tdk}\frac{R^{d+2-2i-2j}}{d+2-2i-2j}d_id_j\int_R^\infty f'(r) r^{2i-2}\,dr \int_R^\infty \overline{f'(r)} r^{2j-2}\,dr
\end{aligned}
\end{equation}
 with
 \begin{equation}\label{di-formula}d_j =\frac{ \prod_{1\leq \ell \leq  \tdk}(d+2-2\ell-2j)}{ \prod_{1\leq \ell\leq  \tdk, \ell\not=j}(2\ell-2j) }, \quad 1\leq j\leq \tdk=[\frac{d+2}{4}]\end{equation}
\end{enumerate}
\begin{nb} From now on, the spaces $W, \tilde{W}$ will be fixed as in (\ref{g-w}), (\ref{tdw}) and the formulas for $c_i, d_i$ will be fixed as in (\ref{ci-formula}), (\ref{di-formula}).  They will be applied to  each particular dimension $d=4k\pm 1$ or $d=4k+3$. 
\end{nb}

Now let us  collect some useful facts  concerning the coefficients $c_j$.
\begin{lem}\label{contour-integral} Given the coefficients  $c_j, 1\leq j\leq k=[\frac{d}{4}]$ and $d_j, 1\leq j\leq \tdk=[\frac{d+2}{4}]$ defined as in  {(\ref{ci-formula}), (\ref{di-formula})}, we have the following identities
\begin{equation}\label{ci-identity-1}\sum_{j=1}^{k}\frac{c_j}{d-2m-2j}=1, 
\hspace{0.5cm } \text{ for any } m\in \Z, 1\leq m\leq k \end{equation}
\begin{equation}\label{ci-identity-2}\sum_{j=1}^{k}\frac{c_j}{2j}+1=\prod_{\ell=1}^k\frac{d-2\ell}{2\ell}
\end{equation}
Similarly we have 
\begin{equation}\label{di-identity-1}\sum_{j=1}^{\tdk}\frac{d_j}{d+2-2m-2j}=1, 
\hspace{0.5cm } \text{ for any } m\in \Z, 1\leq m\leq \tdk \end{equation}
\begin{equation}\label{di-identity-2}\sum_{j=1}^{\tdk}\frac{d_j}{2j}+1=\prod_{\ell=1}^{\tdk}\frac{d+2-2\ell}{2\ell}
\end{equation}
\end{lem}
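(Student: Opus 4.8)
The plan is to reduce all four identities to a single partial-fraction (equivalently, residue) identity for the rational function that carries the coefficients $c_j$. Since $d_j$ is obtained from $c_j$ by the formal substitution $d\mapsto d+2$, $k\mapsto\tdk$ — compare (\ref{ci-formula}) with (\ref{di-formula}) — and $d+2$ is again odd, the identities (\ref{di-identity-1}) and (\ref{di-identity-2}) will follow from (\ref{ci-identity-1}) and (\ref{ci-identity-2}) with no additional argument. So I would concentrate on the $c_j$.

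First I would introduce the rational function
\[
g(z)=\frac{\prod_{\ell=1}^{k}(d-z-2\ell)}{\prod_{\ell=1}^{k}(z-2\ell)}\cdot\frac{1}{z-w},\qquad w\in\mathbb{C}\setminus\{2,4,\dots,2k\}.
\]
Its numerator has degree $k$ in $z$ and its denominator degree $k+1$, so $z\,g(z)\to(-1)^{k}$ as $|z|\to\infty$; integrating $g$ over the circle $|z|=\rho$ and letting $\rho\to\infty$, the residue theorem gives $(-1)^{k}=\sum_{j=1}^{k}\operatorname{Res}_{z=2j}g+\operatorname{Res}_{z=w}g$. Computing the simple residues — using $\prod_{\ell\neq j}(2j-2\ell)=(-1)^{k-1}\prod_{\ell\neq j}(2\ell-2j)$ and the definition (\ref{ci-formula}), so that $\operatorname{Res}_{z=2j}g=(-1)^{k-1}c_j/(2j-w)$ while $\operatorname{Res}_{z=w}g=\prod_{\ell}(d-w-2\ell)/\prod_{\ell}(w-2\ell)$ — and rearranging, I would obtain the master identity
\[
\sum_{j=1}^{k}\frac{c_j}{w-2j}=1+(-1)^{k-1}\,\frac{\prod_{\ell=1}^{k}(d-w-2\ell)}{\prod_{\ell=1}^{k}(w-2\ell)},\qquad w\notin\{2,\dots,2k\}.
\]

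From here the two $c_j$-identities are immediate specializations. For (\ref{ci-identity-1}) I would set $w=d-2m$ with $1\le m\le k$: because $d$ is odd, $d-2m$ is odd and hence lies outside $\{2,\dots,2k\}$, so the formula applies, and the numerator $\prod_{\ell=1}^{k}(2m-2\ell)$ vanishes (its $\ell=m$ factor is $0$), leaving the right-hand side equal to $1$. For (\ref{ci-identity-2}) I would set $w=0$: the right-hand side becomes $1+(-1)^{k-1}\prod_{\ell}(d-2\ell)/\prod_{\ell}(-2\ell)=1-\prod_{\ell=1}^{k}\frac{d-2\ell}{2\ell}$, while the left-hand side is $-\sum_{j}c_j/(2j)$; rearranging gives (\ref{ci-identity-2}). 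Running the same two specializations with $d\mapsto d+2$ and $k\mapsto\tdk$ — under which $d+2$ stays odd, so again no forbidden value of $w$ is met — yields (\ref{di-identity-1}) and (\ref{di-identity-2}).

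The computation is elementary and short; the only points deserving care are the degree bookkeeping that pins the boundary contribution at infinity to be exactly $(-1)^{k}$ (equivalently, that the polynomial part of the partial-fraction decomposition of $\prod_\ell(d-z-2\ell)/\prod_\ell(z-2\ell)$ is the constant $(-1)^{k}$), and the parity observation — the genuine use of the hypothesis that $d$ (resp. $d+2$) is odd — that this keeps the pole at $w=d-2m$ from colliding with any pole at $z=2j$, which is precisely what legitimizes the residue count and forces the relevant numerator to vanish. I would also record that in the degenerate case $j=k=1$ the convention $\prod_{1\le\ell\le k,\ \ell\neq j}(2\ell-2j)=1$ is consistent with the master identity, so no separate treatment of the smallest dimensions is needed.
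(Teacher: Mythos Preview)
Your argument is correct and is essentially the same as the paper's: both integrate $\frac{\beta(z)}{\alpha(z)(z-w)}$ with $\alpha(z)=\prod_\ell(z-2\ell)$, $\beta(z)=\prod_\ell(z-(d-2\ell))$ over a large circle and read off the residues, your $g$ being $(-1)^k$ times this function. The only cosmetic difference is that you package the computation as a single master identity in the parameter $w$ and then specialize to $w=d-2m$ and $w=0$, whereas the paper treats these two choices separately; the content is identical.
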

\begin{proof}
Set $\alpha(z)=\prod_{\ell=1}^{k}(z-x_\ell), \beta(z)=\prod_{\ell=1}^{k}(z-y_\ell )$ with $x_\ell=2\ell, y_\ell=d-2\ell$. 

Considering the contour integral with $\gamma$ being a circle around the origin,  for $1\leq m\leq k$  we obtain 
\[\frac{1}{2\pi i}\oint_{\gamma}\frac{\beta(z)}{\alpha(z)}\frac{1}{z-y_m}\, dz =\sum_{j=1}^{k}\mathrm{Res}(\frac{\beta(z)}{\alpha(z)}\frac{1}{z-y_m}, x_j)\]
When we take the circle $\gamma$ large enough, the limit on the left goes to~$1$. On the other hand, since $\alpha(z)$ is holomorphic with $x_j$ as zeros of order~$1$, the residues are exactly
\[\mathrm{Res}(\frac{\beta(z)}{\alpha(z)}\frac{1}{z-y_m}, x_j) =\frac{\beta(x_j)}{\alpha'(x_j)}\frac{1}{x_j-y_m}=\frac{c_j}{d-2m-2j}\]
whence we proved (\ref{ci-identity-1}).

Next   consider the function $\frac{\beta(z)}{\alpha(z)}\frac{1}{z}$,  which has simple poles at $z=x_\ell, 1\leq \ell\leq  k$ and $z=0$. Hence 
\[\frac{1}{2\pi i}\oint_{\gamma}\frac{\beta(z)}{\alpha(z)}\frac{1}{z }dz =\sum_{j=1}^{k}\mathrm{Res}(\frac{\beta(z)}{\alpha(z)}\frac{1}{z}, x_j) +\mathrm{Res}(\frac{\beta(z)}{\alpha(z)}\frac{1}{z},0)\]
we still take the circle $\gamma$ large enough, the limit on the left goes to~$1$. And we can compute the residues 
 \[\mathrm{Res}(\frac{\beta(z)}{\alpha(z)}\frac{1}{z}, x_j)  = \frac{\beta(x_j)}{\alpha'(x_j)}\frac{1}{x_j}=-\frac{c_j}{2j}\]
and \[\mathrm{Res}(\frac{\beta(z)}{\alpha(z)}\frac{1}{z},0)=\frac{\beta(0)}{\alpha(0)} =\prod_{\ell=1}^k\frac{d-2\ell}{2\ell}\]
whence we obtain 
\[1= - \sum_{j=1}^k \frac{c_j}{2j} +\prod_{\ell=1}^k\frac{d-2\ell}{2\ell}\]
The proofs for  (\ref{di-identity-1}), (\ref{di-identity-2}) are almost identical,  by letting  $\tilde{\alpha}(z)=\prod_{\ell=1}^{\tdk}(z-x_\ell)$, $\beta(z)=\prod_{\ell=1}^{\tdk}(z-y_\ell )$ with $x_\ell=2\ell, y_\ell=d+2-2\ell$, and by repeating the contour integral argument. 
\end{proof}

\subsection{Proof of Theorem~\ref{CoreThm} for $(0,g)$ data}\label{g-proof} 

Here let us verify that plugging the Fourier expansion formula of Lemma~\ref{phi-expansion} into (\ref{g3'}) yields  a multiple of (\ref{gProjection}).
We again split the proof depending on the dimension.  

As before, we assume $r_1, r_2>R$
 because for $(0,g)$ data, we can assume $g(r)=0$ for $r\leq R$ by approximation. Hence the  terms created by the $\delta$-function will make no contribution to our integral.

When $d=4k+3, n=\frac{d-3}{2}=2k$, 
 using (\ref{evenC}) and Lemma~\ref{delta-com} we obtain 
\begin{align*}
&\frac{1}{2^{d-1}\pi^{d}}AS_d(g) \\= & \int_R^\infty |g(r)|^2r^{d-1}\,dr \\
  &-  \lim_{\eps\rightarrow 0}\iint\frac{1}{2\pi^2 r_1r_2}\left[\int_{-R}^R [\kappa_{\eps}*\hp_{2k}](\frac{\xi}{r_1})\overline{[\kappa_{\eps}*\hp_{2k}](\frac{\xi}{r_2})}d\xi \right]g(r_1)\overline{g(r_2)} (r_1r_2)^{\frac{d-1}{2}}\,dr_1dr_2\\
=&  \int_R^\infty |g(r)|^2r^{d-1}\,dr - \frac12   \sum_{i,j=1}^k c_{i}c_{j} \times\\ & \lim_{\eps\rightarrow 0}\!\!\iint_{r_1,r_2>0}\left[\int_{-R}^R  [\kappa_{\eps}*\xi^{2k-2i+1}\chi_{(-1,1)}](\frac{\xi}{r_1}) {[\kappa_{\eps}*\xi^{2k-2j+1}\chi_{(-1,1)}](\frac{\xi}{r_2})}d\xi \right]g(r_1)\overline{g(r_2)} (r_1r_2)^{\frac{d-3}{2}}\,dr_1dr_2\\
=&  \int_R^\infty g(r)^2r^{d-1}\,dr 
  -  \sum_{i,j=1}^k\frac{ c_{i}c_{j} R^{4k+3-2i-2j}}{4k+3-2i-2j}\int_R^\infty g(r)r^{\frac{d-3}{2}-2(k-i)-1}\,dr \int_R^\infty \overline{g(r)}r^{\frac{d-3}{2}-2(k-j)-1}\,dr
\end{align*}
Using the relation $\frac{d-3}{2}-2(k-i)-1=2i-1$, and comparing with (\ref{gProjection}), we proved
\begin{equation}AS_d(g) =2^{d-1}\pi^{d} \|\mathrm{Proj}_{W^{\perp}}g\|^{2}_{L^2(r\geq R, r^{d-1}\,dr)},\hspace{1cm}d=4k+3.\end{equation}
Similarly, we can repeat the proof when $d=4k+1, n=\frac{d-3}{2}=2k-1$ using (\ref{oddC})
    \begin{align*}
&\frac{1}{2^{d-1}\pi^{d}} AS_d(g) \\= &  \int_R^\infty g(r)^2r^{d-1}\,dr \\
  &- \lim_{\eps\rightarrow 0}\iint \!\!\frac{1}{2\pi^2 r_1r_2} \!\!\int_{-R}^R [\kappa_{\eps}*\hp_{2k-1}](\frac{\xi}{r_1})\overline{[\kappa_{\eps}*\hp_{2k-1}](\frac{\xi}{r_2})}d\xi \,g(r_1)\overline{g(r_2)} (r_1r_2)^{\frac{d-1}{2}}\,dr_1dr_2\\
=&  \int_R^\infty g(r)^2r^{d-1}\,dr - \frac12   \sum_{i,j=1}^k c_{i}c_{j} \times\\ & \lim_{\eps\rightarrow 0}\iint_{r_1,r_2>0}\left[\int_{-R}^R  [\kappa_{\eps}*\xi^{2k-2i}\chi_{(-1,1)}](\frac{\xi}{r_1}) {[\kappa_{\eps}*\xi^{2k-2j}\chi_{(-1,1)}](\frac{\xi}{r_2})}d\xi \right]g(r_1)\overline{g(r_2)} (r_1r_2)^{\frac{d-3}{2}}\,dr_1dr_2\\
=&  \int_R^\infty g(r)^2r^{d-1}\,dr 
  -  \sum_{i,j=1}^k\frac{ c_{i}c_{j} R^{4k+1-2i-2j}}{4k+1-2i-2j}\int_R^\infty g(r)r^{2i-1}\,dr \int_R^\infty \overline{g(r)}r^{2j-1}\,dr
\end{align*}
Comparing with (\ref{gProjection}), we proved
\begin{equation}AS_d(g) =2^{d-1}\pi^{d} \|\mathrm{Proj}_{W^{\perp}}g\|^{2}_{L^2(r\geq R, r^{d-1}\,dr)},\hspace{1cm}d=4k+1.\end{equation}
To summarize, the above calculations, together with (\ref{Asfg}), prove  (\ref{extd}) of Theorem~\ref{CoreThm} for $(0,g)$ data.

\subsection{Proof of Theorem~\ref{CoreThm} for $(f,0)$ data.}\label{f-proof} Because of the complicated formula in Corollary~\ref{psi-expansion}, the computations for $(f,0)$ data are considerably more involved.   This is particularly true for 
 terms involving the $\delta$-function.

\subsubsection{Case  $d=4k-1,n=\frac{d-3}{2}=2k-2$} The reason we  prefer to work with $d=4k-1$ instead of $d=4k+3$ is that in the former case the space $\widetilde{W}$ has dimension $[\frac{d+2}{4}]=k$, see (\ref{tdw}).

We plug the Fourier expansion formula of Corollary~\ref{psi-expansion} into (\ref{f3'}). We shall use the following notation to simplify the formula when we have  symmetric summation:   \[\langle S(i,j)\rangle_{sym}=S(i,j)+\overline{S(j,i)}\]
We will use below the notation $F(r_1, r_2):= f(r_1) \ol{ f(r_2)}(r_1 r_2)^{\frac{d-5}{2}}$. 
\begin{align*}
&\frac{1}{2^{d-1}\pi^{d}}AS_d(f) \\ = 
&  \int_0^\infty   |{f'}(r)|^2 r^{d-1}\, dr \\   &-   \lim_{\eps\rightarrow 0} \iint_{r_1,r_2>0}\left[\frac{1}{2\pi^2 r_1^2r_2^2}\int_{-R}^R [\kappa_{\eps}*\calF{\psi}_{2k-2}](\frac{\xi}{r_1})\overline{[\kappa_{\eps}*\calF{\psi}_{2k-2}](\frac{\xi}{r_2})}d\xi\right]f(r_1) \ol{ f(r_2)}   (r_1 r_2)^{\frac{d-1}{2}}\, dr_1 dr_2 \\
= &  \int_0^\infty   |{f'}(r)|^2 r^{d-1}\, dr 
\\  &-  \frac12  \sum_{i,j=1}^{k-1} c_ic_j(2k-1-2i)(2k-1-2j) \times \\ &  \quad \lim_{\eps\rightarrow 0} \iint \left[ \int_{-R}^R [\kappa_{\eps}*\xi^{2k-2-2i} \chi_{(-1,1)}](\frac{\xi}{r_1})[\kappa_{\eps}*\xi^{2k-2-2j}\chi_{(-1,1)}](\frac{\xi}{r_2})d\xi \right]F(r_1,r_2) \, dr_1 dr_2 \\
&-  \frac12     \sum_{i,j=1}^{k-1} c_ic_j \lim_{\eps\rightarrow 0} \iint \left[ \int_{-R}^R[\kappa_{\eps}*(\delta_1+\delta_{-1})](\frac{\xi}{r_1})[\kappa_{\eps}*(\delta_1+\delta_{-1})](\frac{\xi}{r_2})d\xi\right]F(r_1,r_2) \, dr_1 dr_2 \\
&-  \frac12      \lim_{\eps\rightarrow 0} \iint \left[  \int_{-R}^R[\kappa_{\eps}*(-\delta'_1+\delta'_{-1})](\frac{\xi}{r_1})[\kappa_{\eps}*(-\delta'_1+\delta'_{-1})](\frac{\xi}{r_2})d\xi\right]F(r_1,r_2) \, dr_1 dr_2 \\
&+  \Big\langle \sum_{i,j=1}^{k-1} (k-i-\frac12) c_ic_j\lim_{\eps\rightarrow 0} \!\!\iint  \!\!\!\! \int_{-R}^R [\kappa_{\eps}*\xi^{2k-2-2i} \chi_{(-1,1)}](\frac{\xi}{r_1})[\kappa_{\eps}*(\delta_1+\delta_{-1})](\frac{\xi}{r_2})d\xi  F(r_1,r_2) \, dr_1 dr_2 \Big\rangle_{sym}\\
& -\Big\langle \sum_{i=1}^{k-1} (k-i-\frac12) c_i\lim_{\eps\rightarrow 0} \!\!\iint  \!\!\!\! \int_{-R}^R [\kappa_{\eps}*\xi^{2k-2-2i} \chi_{(-1,1)}](\frac{\xi}{r_1})[\kappa_{\eps}*(\delta'_{-1}-\delta'_1)](\frac{\xi}{r_2})d\xi  F(r_1,r_2) \, dr_1 dr_2 \Big\rangle_{sym}\\
& -  \Big\langle \frac12\sum_{i=1}^{k-1} c_i \lim_{\eps\rightarrow 0} \iint\left[ \int_{-R}^R  [\kappa_{\eps}*(\delta_1+\delta_{-1})](\frac{\xi}{r_1})[\kappa_{\eps}*(\delta'_1- \delta'_{-1})](\frac{\xi}{r_2})d\xi \right]F(r_1,r_2) \, dr_1 dr_2 \Big\rangle_{sym}\\
=&  \int_0^\infty   |{f'}(r)|^2 r^{d-1}\, dr \\
& -\sum_{i,j=1}^{k-1} c_ic_j  \iint_{r_1,r_2>0} \frac{(2k-1-2i)(2k-1-2j)}{(4k-3-2i-2j)}\frac{\min(r_1,r_2,R)^{4k-3-2i-2j}}{r_1^{2k-2i-2}r_2^{2k-2j-2}} f(r_1) \ol{ f(r_2)}   (r_1 r_2)^{\frac{d-5}{2}} dr_1dr_2\\
& -\sum_{i,j=1}^{k-1} c_ic_j \int_0^R    |f(r)|^2 r^{d-3}\,dr \\
& - \int_0^R |\partial_r (f(r)r^{\frac{d-1}{2}})|^2dr\\
 & -  \Big\langle- \sum_{i,j=1}^{k-1} (2k-2i-1) c_i  c_j\int_0^\infty \int_0^{\min(r_1,R)} r_2 (\frac{r_2}{r_1})^{2k-2i-2}f(r_1) \ol{ f(r_2)}   (r_1 r_2)^{\frac{d-5}{2}} \,dr_2dr_1 \Big\rangle_{sym}\\
 & -  \Big\langle - \sum_{i=1}^{k-1} (2k-2i-1) c_i    \int_0^\infty \int_0^{\min(r_1,R)} (\frac{r_2}{r_1})^{2k-2i-2} f(r_1) r_1 ^{\frac{d-5}{2}}\partial_{r_2}( \ol{ f(r_2)}   r_2^{\frac{d-1}{2}}) \,dr_2dr_1 \Big\rangle_{sym}\\
 & -  \Big\langle  \sum_{i=1}^{k-1} c_i \int_0^R f(r)r^{\frac{d-3}{2}}\partial_{r} (\overline{f(r)}r^{\frac{d-1}{2}}) \, dr \Big\rangle_{sym}\\
 = &\int_0^\infty   |{f'}(r)|^2 r^{d-1}\, dr - (S_1+S_2 +S_3+ S_4 +S_5 +S_6)
\end{align*}
 Where $S_i$ are  the formulas in the corresponding line.
   
We compute the contributions from each $S_i$. Notice the symmetric position of $r_1, r_2$ and also the fact that we can assume $f(r)=f(R), r\leq R$ by approximation. 
  \begin{align*}
 S_1
=& \sum_{i,j=1}^{k-1}\frac{c_ic_j (2k-1-2i)(2k-1-2j)}{(4k-2i-2j-3)}\iint_{0<r_1<r_2<R \mbox{ or }0<r_1<R< r_2}f(r_1)\overline{f(r_2)}r_1^{2i-1}r_2^{2j-1}r_1^{4k-2i-2j-3}\,dr_1 dr_2\\
 & + \sum_{i,j=1}^{k-1}\frac{c_ic_j (2k-1-2i)(2k-1-2j)}{(4k-2i-2j-3)}\iint_{0<r_2<r_1<R \mbox{ or }0<r_2<R< r_1}f(r_1)\overline{f(r_2)}r_1^{2i-1}r_2^{2j-1}r_2^{4k-2i-2j-3}\,dr_1 dr_2
 \\
 &+ \sum_{i,j=1}^{k-1}\frac{c_ic_j (2k-1-2i)(2k-1-2j)}{(4k-2i-2j-3)}\iint_{0<R< r_1,r_2}f(r_1)\overline{f(r_2)}r_1^{2i-1}r_2^{2j-1}R^{4k-2i-2j-3}\,dr_1 dr_2
 \\
 =&   \Big\langle\sum_{i,j=1}^{k-1}\frac{c_ic_j (2k-1-2i)(2k-1-2j)}{(4k-2i-2j-3)}\Big[\frac{|f(R)|^2R^{4k-3}}{(4k-2j-3)(4k-3)}+f(R)\frac{R^{4k-2j-3}}{4k-2j-3}\int_R^\infty\overline{ f(r)}r^{2j-1}\,dr\Big]\Big\rangle_{sym}\\
 &+ \sum_{i,j=1}^{k-1}\frac{c_ic_j (2k-1-2i)(2k-1-2j)}{(4k-2i-2j-3)} R^{4k-2i-2j-3}\int_R^\infty f(r)r^{2i-1}\,dr \int_R^\infty \overline{f(r)}r^{2j-1}\,dr
  \end{align*}

  Now let us use the formula
 \[ \int_R^{\infty} f r^{2j-1}\,dr =\frac{-1}{(2j)}\big[f(R)R^{2j}+  \int_R^\infty f' r^{2j}\,dr \big]   \] to rewrite this term
  \begin{align} \label{K1}
 S_1=&    \Big\langle \sum_{i,j=1}^{k-1}\frac{c_ic_j (2k-1-2i)(2k-1-2j)}{(4k-2i-2j-3)}[- \frac{|f(R)|^2R^{4k-3}}{2j(4k-3)} - f(R)\frac{R^{4k-2j-3}}{2j(4k-2j-3)}\int_R^\infty \overline{f'(r)}r^{2j}\,dr] \Big\rangle_{sym}\notag\\
 &+ \sum_{i,j=1}^{k-1}\frac{c_ic_j (2k-1-2i)(2k-1-2j)}{4ij (4k-2i-2j-3)} R^{4k -3}|f(R)|^2\notag\\
 & + \Big\langle \sum_{i,j=1}^{k-1}\frac{c_ic_j (2k-1-2i)(2k-1-2j)}{4ij (4k-2i-2j-3)} R^{4k-2j-3}
 f(R)\int_R^\infty  \overline{f'(r)}r^{2j}\,dr\Big\rangle_{sym}  \notag\\
&  +\sum_{i,j=1}^{k-1}\frac{c_ic_j (2k-1-2i)(2k-1-2j)}{4ij (4k-2i-2j-3)} R^{4k-2i-2j-3}
 [ \int_R^\infty f'(r)r^{2i}\,dr\int_R^\infty  \overline{f'(r)}r^{2j}\,dr   ]
   \end{align}
\begin{equation}\label{K2}   \begin{aligned}
 S_2= \sum_{i,j=1}^{k-1}c_ic_j \int_0^R |f(r)|^2 r^{d-3} dr =\sum_{i,j=1}^{k-1}c_ic_j |f(R)|^2\frac{R^{4k-3}}{4k-3}
\end{aligned}\end{equation}
\begin{align}\label{K3}
   S_3& =  \int_0^R |\partial_r(f(r)r^{2k-1})|^2dr
  =\frac{R^{4k-3}}{4k-3}f(R)^2(2k-1)^2
\end{align}
  \begin{align} \label{K4}
 S_4 =& \Big\langle- \sum_{i,j=1}^{k-1}c_ic_j (2k-2i-1)\int_0^\infty \int_0^{\min(r_1,R)}
 f(r_1)r_1^{2i-1}\overline{f(r_2)}r_2^{4k-2i-4} \,dr_2 dr_1 \Big\rangle_{sym}\notag\\
=& \Big\langle-  \sum_{i,j=1}^{k-1}c_ic_j (2k-2i-1)\Big[|f(R)|^2\frac{R^{4k-3}}{(4k-2i-3)(4k-3)} + \overline{f(R)}\frac{R^{4k-2i-3}}{4k-2i-3}\int_R^\infty f(r)r^{2i-1}\,dr\Big]\Big\rangle_{sym}\notag\\
 =& \Big\langle   \sum_{i,j=1}^{k-1}c_ic_j (2k-2i-1)\Big[|f(R)|^2\frac{R^{4k-3}}{2i(4k-3)} +\overline{ f(R)}\frac{R^{4k-2i-3}}{2i(4k-2i-3)}\int_R^\infty f'(r)r^{2i}\,dr\Big]\Big\rangle_{sym}
\end{align}
  To compute $S_5$, we integrate by parts with respect to $r_2$
  \begin{align}
   S_5    =& -   \Big\langle \sum_{i=1}^{k-1} (2k-2i-1) c_i   \int_0^\infty \int_0^{\min(r_1,R)}   f(r_1) r_1 ^{2i-1} r_2^{2k-2i-2}\partial_{r_2}( \ol{ f(r_2)}   r_2^{2k-1}) \,dr_2dr_1 \Big\rangle_{sym}\nonumber\\
  =&  \Big\langle \sum_{i=1}^{k-1} (2k-2i-1)(2k-2i-2) c_i   \int_0^\infty \int_0^{\min(r_1,R)}   f(r_1) r_1 ^{2i-1} ( \ol{ f(r_2)}   r_2^{4k-2i-4}) \,dr_2dr_1 \Big\rangle_{sym}\nonumber\\
  & -  \Big\langle \sum_{i=1}^{k-1} (2k-2i-1) c_i    \int_0^\infty    f(r_1) r_1 ^{2i-1} ( \ol{ f(R)}   \min(r_1,R)^{4k-2i-3}) \, dr_1 \Big\rangle_{sym}\nonumber\\
    =& \Big\langle\sum_{i=1}^{k-1} c_i (2k-2i-1)(2k-2i-2)\Big[\frac{|f(R)|^2 R^{4k-3}}{(4k-2i-3)(4k-3)} + \frac{\overline{f(R)}R^{4k-3-2i}}{4k-3-2i}\int_R^\infty f(r)r^{2i-1}\,dr\Big]\Big\rangle_{sym}\notag\\
 & -  \Big\langle \sum_{i=1}^{k-1} c_i (2k-2i-1)\Big[|f(R)|^2\frac{R^{4k-3}}{4k-3} +\overline{f(R)}R^{4k-2i-3}\int_R^\infty f(r)r^{2i-1}\,dr\Big]\Big\rangle_{sym}\notag\\
 =&(1-2k) \Big\langle\sum_{i=1}^{k-1} c_i \frac{2k-2i-1}{4k-2i-3} [\frac{|f(R)|^2 R^{4k-3}  }{ (4k-3)} +   {\overline{f(R)}R^{4k-3-2i}} \int_R^\infty f(r)r^{2i-1}\,dr]\Big\rangle_{sym}\notag\\
=& (2k-1) \Big\langle\sum_{i=1}^{k-1} c_i \Big[ |f(R)|^2R^{4k-3} \frac{(2k-2i-1)}{2i(4k-3)} + {\overline{f(R)}R^{4k-3-2i}} \frac{2k-2i-1}{2i(4k-2i-3)}\int_R^\infty f'(r)r^{2i}\,dr\Big]\Big\rangle_{sym} \label{K5}
  \end{align}
 \begin{align}\label{K6}
   S_6 
  =\Big\langle  \sum_{i=1}^{k-1} c_i \int_0^Rf(r)r^{2k-2}\partial_r[\overline{f(r)}r^{2k-1}]dr\Big\rangle_{sym}   =\Big\langle\sum_{i=1}^{k-1} c_i  \frac{2k-1}{4k-3}|f(R)|^2R^{4k-3}\Big\rangle_{sym} 
\end{align}

 Now let us try to collect terms depending on whether they involve integrals, comparing with (\ref{fProjection}).

\textbf{Type I:} First we collect the terms  with $\int_R^\infty f'(r) r^{2i}\,dr \int_R^\infty \overline{f'(r)} r^{2j}\,dr$.
Recall from (\ref{ci-formula}), (\ref{di-formula}) with $d=4k-1$ and $[\frac{d}{4}]=k-1, [\frac{d+2}{4}]=k$
 \begin{align} c_j&=\frac{\prod_{\ell=1}^{k-1} (4k-1-2j-2\ell)}{\prod_{1\leq \ell\leq  k-1, \ell\not= j} (2\ell-2j)}, \hspace{0.5cm} 1\leq j\leq k-1 
\\ d_j&= \frac{ \prod_{1\leq \ell \leq  k}(4k+1-2\ell-2j)}{ \prod_{1\leq \ell\leq  k, \ell\not=j}(2\ell-2j) }, \hspace{0.5cm}1\leq j\leq k \label{recall-dj}\end{align}
  hence we have
  \[c_j=d_{j+1}\frac{-2j}{2k-1-2j},\hspace{0.5cm} 1\leq j\leq k-1\]
Thus we obtain a 
term  involving $\int_R^\infty f'(r)r^{2i}\,dr \int_R^\infty \overline{f'(r)} r^{2j}\,dr$ which is  from $S_1$ (\ref{K1}),
\begin{align*} I= & \sum_{i,j=1}^{k-1}c_ic_j\frac{  (2k-1-2i)(2k-1-2j)}{4ij(4k-2i-2j-3)} R^{4k-2i-2j-3} \int_R^\infty f'(r)r^{2i}\,dr \int_R^\infty \overline{f'(r)} r^{2j}\,dr  \\
=&\sum_{i,j=1}^{k-1}\frac{ d_{i+1}d_{j+1}R^{4k-2i-2j-3}}{(4k-2i-2j-3)}\int_R^\infty f'(r)r^{2i}\,dr \int_R^\infty \overline{f'(r)} r^{2j}\,dr \\
= & \sum_{i',j'=2}^k d_{i'}d_{j'} \frac{R^{4k+1-2i'-2j'}}{4k+1-2i'-2j'}\int_R^\infty f'(r)r^{2i'-2}\,dr \int_R^\infty \overline{f'(r)}r^{2j'-2}\,dr\end{align*}
     Here  we replaced $i'=i+1, j'=j+1$ in the last line.

Comparing with the expected formula (\ref{fProjection}), we are missing the terms 
          \begin{equation}d_1d_1 \frac{R^{4k-3}}{4k-3}|f(R)|^2 - \Big\langle  \overline{f(R)}\sum_{i=2}^{k}\frac{ d_{i}d_1R^{4k-2i-1}}{(4k-2i-1)} \int_R^\infty f'(r)r^{2i-2}\,dr\Big\rangle_{sym}\label{MissingTerm} \end{equation}

  \textbf{Type II: } Now we collect the terms which just contain $\int_R^\infty f'(r) r^{2i}\,dr$ from $S_1, S_4, S_5$,  (\ref{K1}), (\ref{K4}), (\ref{K5}). Furthermore,  since the terms are conjugate symmetric, we also treat $\int_R^\infty \overline{ f'(r)} r^{2j}\,dr$ in passing. 
  \begin{align*}
 &II=   - \sum_{i,j=1}^{k-1}\frac{c_ic_j (2k-1-2i)(2k-1-2j)}{(4k-2i-2j-3)} \overline{ f(R)}\frac{R^{4k-2i-3}}{2i(4k-2i-3)}\int_R^\infty  {f'(r)}r^{2i}\,dr \\
 & + \sum_{i,j=1}^{k-1}\frac{c_ic_j (2k-1-2i)(2k-1-2j)}{4ij (4k-2i-2j-3)} R^{4k-2i-2j-3}
 \overline{f(R)}R^{2j}\int_R^\infty  {f'(r)}r^{2i}\,dr  \\
&
+    \sum_{i,j=1}^{k-1}c_ic_j (2k-2i-1) \overline{f(R)}\frac{R^{4k-2i-3}}{2i(4k-2i-3)}\int_R^\infty f'(r)r^{2i}\,dr \\
&+(2k-1) \sum_{i=1}^{k-1}   c_i {\overline{f(R)}R^{4k-3-2i}} \frac{2k-2i-1}{2i(4k-2i-3)}\int_R^\infty f'(r)r^{2i}\,dr\\
=&   \sum_{i=1}^{k-1} \overline{f(R)}R^{4k-2i-3} \int_R^\infty f'(r)r^{2i}\,dr  \times \\
& \sum_{j=1}^{k-1}\Big[-\frac{c_ic_j (2k-1-2i)(2k-1-2j)}{2i(4k-2i-3)(4k-2i-2j-3)} + \frac{c_ic_j (2k-1-2i)(2k-1-2j)}{4ij (4k-2i-2j-3)}
+   \frac{c_ic_j (2k-2i-1)}{2i(4k-2i-3)}\Big]
 \\
 &+  \sum_{i=1}^{k-1} \overline{f(R)}R^{4k-2i-3} \int_R^\infty f'(r)r^{2i}\,dr \frac{ c_i (2k-1)(2k-2i-1)}{2i(4k-2i-3)}
\\
=&   \sum_{i'=2}^{k}d_{i'}\overline{f(R)}R^{4k-2i'-1} \int_R^\infty f'(r)r^{2i'-2}\,dr  \times \\
&\Big\{ \sum_{j'=2}^{k}\Big[\frac{d_{j'} (2-2j')}{(4k-2i'-1)(4k+1-2i'-2j')}+ \frac{d_j'}{4k+1-2i'-2j'} +\frac{d_j' (2j'-2)}{(2k+1-2j')(4k-2i'-1)}\Big]-\frac{2k-1}{4k-2i'-1}\Big\}
  \end{align*}
  Here we still used $i'=i+1, j'=j+1$.

  Now
  we clean up the last line by combining terms involving $d_{j'}$
  \begin{equation}[\sum_{j'=2}^{k} \frac{d_{j'}}{2k+1-2j'}-1]\frac{2k-1}{
  4k-2i'-1} \label{d1}\end{equation}
Using (\ref{di-identity-1}) (notice $d=4k-1$, $\tdk=[\frac{d+2}{4}]=k$, take $m=k$), we get 
  \begin{equation}  \sum_{j'=1}^{k} \frac{d_{j'}}{2k+1-2j'}= 1\label{SumIdentity}\end{equation}
Hence  (\ref{SumIdentity}) implies that  (\ref{d1})  actually equals $-\frac{d_1}{4k-2i'-1}$, and we conclude that 
\[II=  -\sum_{i'=2}^{k}\frac{d_1d_{i'}}{4k-2i'-1}\overline{f(R)}R^{4k-2i'-1} \int_R^\infty f'(r)r^{2i'-2}\,dr \]
Comparing with (\ref{MissingTerm}) and also recalling that we have a conjugate symmetric term with $\int_R^\infty \overline{f'(r)}r^{2j}\,dr$, we see they are exactly the integral terms in (\ref{MissingTerm}).

\textbf{Type III:} Now we collect all the terms that do not involve an integral, and which come from $S_1,\cdots, S_6$.
  \begin{align*}
  III = &   \Big\langle-\sum_{i,j=1}^{k-1}\frac{c_ic_j (2k-1-2i)(2k-1-2j)}{(4k-2i-2j-3)}[ |f(R)|^2\frac{R^{4k-3}}{2j(4k-3)} ]\Big\rangle_{sym}\\
 &+ \sum_{i,j=1}^{k-1}\frac{c_ic_j (2k-1-2i)(2k-1-2j)}{4ij (4k-2i-2j-3)} R^{4k -3}|f(R)|^2\\
 &+ \sum_{i,j=1}^{k-1}c_ic_j f(R)^2\frac{R^{4k-3}}{4k-3} +\frac{R^{4k-3}}{4k-3}|f(R)|^2(2k-1)^2\\
 &+ \Big\langle  \sum_{i,j=1}^{k-1}c_ic_j (2k-2i-1)[|f(R)|^2\frac{R^{4k-3}}{2i(4k-3)}  ]\Big\rangle_{sym}\\
 &+(2k-1) \Big\langle\sum_{i=1}^{k-1}  [c_i |f(R)|^2R^{4k-3} \frac{(2k-2i-1)}{2i(4k-3)} ]\Big\rangle_{sym}+\Big\langle\sum_{i=1}^{k-1} c_i  \frac{2k-1}{4k-3}|f(R)|^2R^{4k-3}\Big\rangle_{sym}
  \end{align*}
  Let us fully expand the symmetric notation, and also turn $c_i$ into $d_{i'}$, we get
    \begin{align*}
  III = & |f(R)|^2R^{4k-3} \sum_{i'=2, j'=2}^{k}d_{i'}d_{j'}[\frac{(2-2i')+(2-2j')}{(4k+1-2i'-2j')(4k-3)}+  \frac{1}{4k+1-2i'-2j'}]
  \\ &+ |f(R)|^2R^{4k-3}\frac{1}{4k-3} \sum_{i'=2, j'=2}^{k}d_{i'}d_{j'}
 \Big[ \frac{2-2i'}{2k+1-2i'}\frac{2-2j'}{2k+1-2j'} +\frac{2i'-2}{2k+1-2i'} +\frac{2j'-2}{2k+1-2j'}\Big]\\
  &+ |f(R)|^2R^{4k-3}\frac{2k-1}{4k-3} \sum_{i'=2 }^{k}d_{i'}\Big[-1+ \frac{2-2i'}{2k+1-2i'}\Big]+|f(R)|^2R^{4k-3}\frac{2k-1}{4k-3} \sum_{i'=2 }^{k}d_{j'}\Big[-1+ \frac{2-2j'}{2k+1-2j'}\Big]\\
  &+ \frac{R^{4k-3}}{4k-3}|f(R)|^2(2k-1)^2\\
  =&  |f(R)|^2R^{4k-3} \frac{(2k-1)^2}{4k-3}\Big[\sum_{i'=2, j'=2}^{k}\frac{d_{i'}}{2k+1-2i'}\frac{d_{j'} }{2k+1-2j'}-\sum_{i'=2}^k \frac{d_{i'}}{2k+1-2i'}-\sum_{j'=2}^k \frac{d_{j'}}{2k+1-2j'}+1\Big]\\
  =& |f(R)|^2R^{4k-3} \frac{(2k-1)^2}{4k-3} \Big[\sum_{i'=2}^k \frac{d_{i'}}{2k+1-2i'} -1\Big]\Big[\sum_{j'=2}^k \frac{d_{j'}}{2k+1-2j'} -1\Big]\\
  =&d_1d_1\frac{|f(R)|^2 R^{4k-3}}{4k-3}
  \end{align*}
Here we used the identity (\ref{SumIdentity})  again, and notice this is exactly the missing constant term in (\ref{MissingTerm}).

 To summarize, we have proved that for $d=4k-1$ one has 
 \[AS_d{f}=2^{d-1}\pi^{d}\|\mathrm{Proj}_{\widetilde{W}^{\perp}}f\|^{2}_{\dot{H}^1(r\geq R, r^{d-1}\,dr)}\]
 with $\tilde{W}$ defined in (\ref{tdw}).

\subsubsection{Case $d=4k+1, n=\frac{d-3}{2}=2k-1$. }  The  calculation proceeds analogously to the previous case.  Plugging the expansion (\ref{oddPsi}) into (\ref{f3'}), we obtain (again denoting $F(r_1, r_2):= f(r_1) \ol{f (r_2)} (r_1 r_2)^{\frac{d-5}{2}}$ ) 
\begin{align*}
&\frac{1}{2^{d-1}\pi^{d}}AS_d(f) \\ = 
&  \int_0^\infty   |{f'}(r)|^2 r^{d-1}\, dr \\   &-   \lim_{\eps\rightarrow 0} \iint_{r_1,r_2>0}\left[\frac{1}{2\pi^2 r_1^2r_2^2}\int_{-R}^R [\kappa_{\eps}*\calF{\psi}_{2k-1}](\frac{\xi}{r_1})\overline{[\kappa_{\eps}*\calF{\psi}_{2k-1}](\frac{\xi}{r_2})}d\xi\right]f(r_1) \ol{ f(r_2)}   (r_1 r_2)^{\frac{d-1}{2}}\, dr_1 dr_2 \\
= &  \int_0^\infty   |{f'}(r)|^2 r^{d-1}\, dr 
\\  &-  \frac12  \sum_{i,j=1}^{k} c_ic_j(2k-2i)(2k-2j) \times \\ &  \quad \lim_{\eps\rightarrow 0} \iint_{r_1,r_2>0} \left[ \int_{-R}^R [\kappa_{\eps}*\xi^{2k-1-2i} \chi_{(-1,1)}](\frac{\xi}{r_1})[\kappa_{\eps}*\xi^{2k-1-2j}\chi_{(-1,1)}](\frac{\xi}{r_2})d\xi \right]F(r_1,r_2) \, dr_1 dr_2 \\
&-  \frac12     \sum_{i,j=1}^{k} c_ic_j \lim_{\eps\rightarrow 0}\iint_{r_1,r_2>0} \left[ \int_{-R}^R[\kappa_{\eps}*(-\delta_1+\delta_{-1})](\frac{\xi}{r_1})[\kappa_{\eps}*(-\delta_1+\delta_{-1})](\frac{\xi}{r_2})d\xi\right]F(r_1,r_2) \, dr_1 dr_2 \\
&-  \frac12      \lim_{\eps\rightarrow 0} \iint_{r_1,r_2>0} \left[  \int_{-R}^R[\kappa_{\eps}*(\delta'_1+\delta'_{-1})](\frac{\xi}{r_1})[\kappa_{\eps}*(\delta'_1+\delta'_{-1})](\frac{\xi}{r_2})d\xi\right]F(r_1,r_2) \, dr_1 dr_2 \\
&-  \Big\langle \sum_{i,j=1}^{k} (k-i) c_ic_j\lim_{\eps\rightarrow 0} \!\!\iint_{r_1,r_2>0}\!   \int_{-R}^R [\kappa_{\eps}*\xi^{2k-1-2i} \chi_{(-1,1)}](\frac{\xi}{r_1})[\kappa_{\eps}*(-\delta_1+\delta_{-1})](\frac{\xi}{r_2})d\xi  F(r_1,r_2) \, dr_1 dr_2 \Big\rangle_{sym}\\
& +\Big\langle \sum_{i=1}^{k} (k-i) c_i\lim_{\eps\rightarrow 0} \!\!\iint_{r_1,r_2>0}  \!\int_{-R}^R [\kappa_{\eps}*\xi^{2k-1-2i} \chi_{(-1,1)}](\frac{\xi}{r_1})[\kappa_{\eps}*(\delta'_1+\delta'_{-1})](\frac{\xi}{r_2})d\xi  F(r_1,r_2) \, dr_1 dr_2 \Big\rangle_{sym}\\
& +  \Big\langle \frac12\sum_{i=1}^{k} c_i \lim_{\eps\rightarrow 0} \iint_{r_1,r_2>0} \left[ \int_{-R}^R  [\kappa_{\eps}*(-\delta_1+\delta_{-1})](\frac{\xi}{r_1})[\kappa_{\eps}*(\delta'_1+ \delta'_{-1})](\frac{\xi}{r_2})d\xi \right]F(r_1,r_2) \, dr_1 dr_2 \Big\rangle_{sym}
\end{align*}
which is then equal to 
\begin{align*}
=&  \int_0^\infty   |{f'}(r)|^2 r^{d-1}\, dr \\
& -\sum_{i,j=1}^{k} c_ic_j  \iint_{r_1,r_2>0} \frac{(2k-2i)(2k-2j)}{(4k-1-2i-2j)}\frac{\min(r_1,r_2,R)^{4k-1-2i-2j}}{r_1^{2k-2i-1}r_2^{2k-2j-1}} f(r_1) \ol{ f(r_2)}   (r_1 r_2)^{\frac{d-5}{2}} dr_1dr_2\\
& -\sum_{i,j=1}^{k} c_ic_j \int_0^R    |f(r)|^2   r^{d-3}\,dr\\
& - \int_0^R |\partial_r (f(r)r^{\frac{d-1}{2}})|^2dr\\
 & -  \Big\langle -\sum_{i,j=1}^{k} (2k-2i) c_i  c_j\int_0^\infty \int_0^{\min(r_1,R)} r_2 (\frac{r_2}{r_1})^{2k-2i-1}f(r_1) \ol{ f(r_2)}   (r_1 r_2)^{\frac{d-5}{2}} \,dr_2dr_1 \Big\rangle_{sym}\\
 & -  \Big\langle -\sum_{i=1}^{k} (2k-2i) c_i    \int_0^\infty \int_0^{\min(r_1,R)} (\frac{r_2}{r_1})^{2k-2i-1} f(r_1) r_1 ^{\frac{d-5}{2}}\partial_{r_2}( \ol{ f(r_2)}   r_2^{\frac{d-1}{2}}) \,dr_2dr_1 \Big\rangle_{sym}\\
 & -  \Big\langle  \sum_{i=1}^{k} c_i \int_0^R f(r)r^{\frac{d-3}{2}}\partial_{r} (\overline{f(r)}r^{\frac{d-1}{2}}) \, dr \Big\rangle_{sym}\\
 = &\int_0^\infty   |{f'}(r)|^2 r^{d-1}\, dr - (\tilde{S}_1+\tilde{S}_2 +\tilde{S}_3+  \tilde{S}_4 + \tilde{S}_5 +  \tilde{S}_6 ), 
\end{align*}
  where $\tilde{S}_i$ denote the formulas in the corresponding line.

 We still assume $f(r)=f(R), r\leq R$ by approximation. Noticing that $d=4k+1$ and $\frac{d-1}{2}=2k$, we compute the  contributions of each term.
  \begin{align*}
\tilde{S}_1=& \sum_{i,j=1}^{k}\frac{c_ic_j (2k-2i)(2k-2j)}{(4k-2i-2j-1)}\iint_{0<r_1<r_2<R \mbox{ or }0<r_1<R< r_2}f(r_1)\overline{f(r_2)}r_1^{2i-1}r_2^{2j-1}r_1^{4k-2i-2j-1}\,dr_1 dr_2\\
 & + \sum_{i,j=1}^{k}\frac{c_ic_j (2k-2i)(2k-2j)}{(4k-2i-2j-1)}\iint_{0<r_2<r_1<R \mbox{ or }0<r_2<R< r_1}f(r_1)\overline{f(r_2)}r_1^{2i-1}r_2^{2j-1}r_2^{4k-2i-2j-1}\,dr_1 dr_2
 \\
 &+ \sum_{i,j=1}^{k}\frac{c_ic_j (2k-2i)(2k-2j)}{(4k-2i-2j-1)}\iint_{0<R< r_1,r_2}f(r_1)\overline{f(r_2)}r_1^{2i-1}r_2^{2j-1}R^{4k-2i-2j-1}\,dr_1 dr_2
 \\
 =&   \Big\langle\sum_{i,j=1}^{k}\frac{c_ic_j (2k-2i)(2k-2j)}{(4k-2i-2j-1)}[\frac{|f(R)|^2R^{4k-1}}{(4k-2j-1)(4k-1)}+f(R)\frac{R^{4k-2j-1}}{4k-2j-1}\int_R^\infty \overline{f(r)}r^{2j-1}\,dr]\Big\rangle_{sym}\\
 &+ \sum_{i,j=1}^{k}\frac{c_ic_j (2k-2i)(2k-2j)}{(4k-2i-2j-1)} R^{4k-2i-2j-1}\int_R^\infty f(r)r^{2i-1}\,dr \int_R^\infty \overline{f(r)}r^{2j-1}\,dr
  \end{align*}
  As before, we perform integration by parts
  \[\int_R^\infty f(r)r^{2j-1}\,dr =- \frac{1}{2j}[f(R)R^{2j} +\int_R^\infty f'(r)r^{2j}\,dr]\]
   \begin{align}\label{KK1}
 \tilde{S}_1=&   \Big\langle- \sum_{i,j=1}^{k}\frac{c_ic_j (2k-2i)(2k-2j)}{(4k-2i-2j-1)}[\frac{|f(R)|^2R^{4k-1}}{2j(4k-1)}+f(R)\frac{R^{4k-2j-1}}{2j(4k-2j-1)}\int_R^\infty \overline{f'(r)}r^{2j}\,dr]\Big\rangle_{sym}\notag\\
 &+ \sum_{i,j=1}^{k}\frac{c_ic_j (2k-2i)(2k-2j)}{4ij(4k-2i-2j-1)} |f(R)|^2R^{4k-1} + \Big\langle \sum_{i,j=1}^{k}\frac{c_ic_j (2k-2i)(2k-2j)}{4ij(4k-2i-2j-1)} R^{4k-2i-1}\overline{f(R)}\int_R^\infty f'(r)r^{2i}\,dr\Big\rangle_{sym}\notag\\
& + \sum_{i,j=1}^{k}\frac{c_ic_j (2k-2i)(2k-2j)}{4ij(4k-2i-2j-1)} R^{4k-2i-2j-1}\int_R^\infty f'(r)r^{2i}\,dr \int_R^\infty \overline{f'(r)}r^{2j}\,dr
  \end{align}
   \begin{align}\label{KK2}   \tilde{S}_2 = \sum_{i,j=1}^{k}c_ic_j \int_0^R |f(r)|^2 r^{d-3}\,dr
  =\sum_{i,j=1}^{k}c_ic_j |f(R)|^2\frac{R^{4k-1}}{4k-1}
\end{align}
    \begin{align}\label{KK3}
 \tilde{S}_3& =  \int_0^R |\partial_r(f(r)r^{2k})|^2dr  =4k^2 \frac{R^{4k-1}}{4k-1}|f(R)|^2
 \end{align}
 \begin{align}\label{KK4}
\tilde{S}_4   =& \Big\langle- \sum_{i, j=1}^k c_ic_j (2k-2i)\int_0^\infty \int_{0}^{\min(r_1, R)}f(r_1)\overline{f(r_2)}r_1^{2i-1}r_2^{4k-2i-2}\,dr_1dr_2\Big\rangle_{sym}\notag\\
  =& \Big\langle-  \sum_{i, j=1}^k c_ic_j (2k-2i)[\frac{|f(R)|^2R^{4k-1}}{(4k-2i-1)(4k-1)} + \overline{f(R)}\frac{R^{4k-2i-1}}{4k-2i-1}\int_R^\infty f(r)r^{2i-1}\,dr]\Big\rangle_{sym}\notag\\
  = &\Big\langle \sum_{i, j=1}^k c_ic_j (2k-2i)[|f(R)|^2\frac{R^{4k-1}}{2i(4k-1)} + \overline{f(R)}\frac{R^{4k-2i-1}}{2i(4k-2i-1)}\int_R^\infty f'(r)r^{2i}\,dr]\Big\rangle_{sym}
 \end{align}
 To compute $\tilde{S}_5$, we integrate by parts with respect to $r_2$ which yields
 \begin{align}\label{KK5}
  \tilde{S}_5 
=&   \Big\langle \sum_{i=1}^{k} c_i (2k-2i)(2k-2i-1)\int_{0}^\infty\int_0^{\min(r_1,R)} f(r_1)r_1^{2i-1}\overline{f(r_2)} r_2^{4k-2i-2} \,dr_2 dr_1\Big\rangle_{sym}\notag\\
 &- \Big\langle \sum_{i=1}^{k} c_i (2k-2i)\int_{0}^\infty f(r_1)r_1^{2i-1} \overline{f(\min(r_1, R))} \min(r_1,R)^{4k-2i-1} dr_1\Big\rangle_{sym}\notag\\
 =&  \Big\langle \sum_{i=1}^{k} c_i  (2k-2i)(2k-2i-1)[\frac{|f(R)|^2R^{4k-1}}{(4k-2i-1)(4k-1)} + \frac{\overline{f(R)}R^{4k-2i-1}}{4k-2i-1}\int_R^\infty f(r)r^{2i-1}\,dr]
\Big\rangle_{sym}\notag\\
 &- \Big\langle \sum_{i=1}^{k} c_i (2k-2i)[|f(R)|^2\frac{R^{4k-1}}{4k-1} + \overline{f(R)}R^{4k-2i-1}\int_R^\infty f(r)r^{2i-1}\,dr]
 \Big\rangle_{sym}\notag\\
      =&-2k \Big\langle\sum_{i=1}^{k} c_i [ \frac{(2k-2i)|f(R)|^2R^{4k-1}}{(4k-1)(4k-1-2i)} + \overline{ f(R)}R^{4k-2i-1}\frac{2k-2i}{4k-1-2i}\int_R^\infty f(r)r^{2i-1}\,dr]\Big\rangle_{sym}\notag\\
    =&2k \Big\langle\sum_{i=1}^{k} [c_i |f(R)|^2R^{4k-1}\frac{2k-2i}{2i(4k-1)} + c_i \overline{f(R)}R^{4k-2i-1}\frac{2k-2i}{2i(4k-1-2i)}\int_R^\infty f'(r)r^{2i}\,dr]\Big\rangle_{sym}
 \end{align}
      \begin{align}\label{KK6}
  \tilde{S}_6  & =\Big\langle  \sum_{i=1}^{k} c_i \int_0^Rf(r)r^{2k-1}\partial_r[\overline{f(r)}r^{2k}]dr\Big\rangle_{sym} 
  =\Big\langle\sum_{i=1}^{k} c_i  \frac{2k}{4k-1}|f(R)|^2R^{4k-1}\Big\rangle_{sym}
 \end{align}
 As in the previous section,  we wish to show that the asymptotic expansion corresponds to terms in (\ref{fProjection}), 
 which in our setting look like
 \begin{equation}\sum_{i,j=1}^k\frac{R^{4k+3-2i-2j}}{4k+3-2i-2j}d_id_j \int_R^\infty f'(r) r^{2i-2}\,dr \int_R^\infty \overline{f'(r)} r^{2j-2}\,dr\label{E4k+1}\end{equation}
Let us regroup terms from $  \tilde{S}_1,\cdots, \tilde{S}_6$.

\textbf{Type I: } Terms containing $\int_R^\infty f'(r)r^{2i}\,dr \int_R^\infty \overline{f'(r)}r^{2j}\,dr$, which occur  for $  \tilde{S}_1$
  \begin{align}\label{I-formula}
\tilde{I} = \sum_{i,j=1}^{k}\frac{c_ic_j (2k-2i)(2k-2j)}{4ij(4k-2i-2j-1)}  R^{4k-2i-2j- 1}  \ifai\int_R^\infty \overline{f'} r^{2j}\,dr
 \end{align}
Rewrite  $c_i, d_i$,   (\ref{ci-formula}), (\ref{di-formula}) with 
 $d=4k+1$, we get   
\begin{align} 
c_i &=\frac{\prod_{\ell=1}^k (4k+1-2i-2\ell)}{\prod_{\ell=1, \ell\not= i}^k (2\ell-2i)}, \hspace{0.5cm} 1\leq i\leq k\\
 d_i &=\frac{\prod_{\ell=1}^k (4k+3-2i-2\ell)}{\prod_{\ell=1, \ell\not= i}^k (2\ell-2i)}, \hspace{0.5cm} 1\leq i\leq k\label{di-case2} \\
 (-1)\frac{2k-2i}{2i}c_i&=d_{i+1}, \hspace{0.5cm} 1\leq i\leq k-1 
 \end{align}
 We again set $i'=i+1, j'=j+1$. Notice that even though the summation over $i, j$ runs from $1$ to $k$ in (\ref{I-formula}), the term vanishes when $i=k$ or $j=k$. So the summation is actually from $1$ to $k-1$.
 Therefore, 
  \[\tilde{I} =\sum_{i',j'=2}^{k}\frac{R^{4k+3-2i'-2j'} }{(4k+3-2i'-2j')}   d_{i'}d_{j'} \int_R^\infty f'(r) r^{2i'-2}\,dr\int_R^\infty \overline{f'(r)} r^{2j'-2}\,dr
\]
Comparing this with our desired formula (\ref{E4k+1}), it is clear that we are still missing the term
\begin{equation}   \frac{|f(R)|^2 R^{4k-1}}{4k-1}d_1d_1
- \Big\langle \sum_{i'=2}^{k}\frac{\overline{f(R)}R^{4k+1-2i'} }{(4k+1-2i')}   d_{i'}d_{1} \int_R^\infty f'(r) r^{2i'-2}\,dr\Big\rangle_{sym}
\label{Missing}\end{equation}

\textbf{Type II:} Now let us collect the terms involving only one integral $\int_R^\infty f'(r) r^{2i-2}\,dr$ from $  \tilde{S}_1,   \tilde{S}_4,   \tilde{S}_5$
 \begin{align*}
 \widetilde{ II} =
 &   - \sum_{i,j=1}^{k}\frac{c_ic_j (2k-2i)(2k-2j)}{(4k-2i-2j-1)}[\overline{f(R)}\frac{R^{4k-2i-1}}{2i(4k-2i-1)}\int_R^\infty f'(r)r^{2i}\,dr] \\
 &+  \sum_{i,j=1}^{k}\frac{c_ic_j (2k-2i)(2k-2j)}{4ij(4k-2i-2j-1)} R^{4k-2i-1}\overline{f(R)}\int_R^\infty f'(r)r^{2i}\,dr \\
&+  \sum_{i, j=1}^k c_ic_j (2k-2i)[ \overline{ f(R)}\frac{R^{4k-2i-1}}{2i(4k-2i-1)}\int_R^\infty f'(r)r^{2i}\,dr] \\
&+ 2k \sum_{i=1}^{k} [  c_i \overline{f(R)}R^{4k-2i-1}\frac{2k-2i}{2i(4k-1-2i)}\int_R^\infty f'(r)r^{2i}\,dr]
\\
=&\sum_{i=1}^k \overline{f(R)}R^{4k-2i-1}c_i\frac{2k-2i}{2i} \int_R^\infty f'(r)r^{2i}\,dr \times \\
&\{ \sum_{j=1}^k c_j [-\frac{2k-2j}{(4k-2i-2j-1)(4k-2i-1)} +\frac{2k-2j}{2j (4k-2i-2j-1)} +\frac{1}{4k-2i-1}]+ \frac{2k}{4k-2i-1}\}
  \end{align*}
We simplify the last line to look as follows:
  \begin{equation}\label{finalsum}[\sum_{j=1}^k \frac{c_j}{2j} + 1]\frac{2k}{4k-2i-1}\end{equation}
  Using the identity (\ref{ci-identity-2}) (notice that $d=4k+1$, and the formula for $d_1$ in this setting (\ref{di-case2})), we infer that 
  \begin{equation}1 + \sum_{j=1}^k\frac{c_j}{2j} =\prod_{\ell=1}^k\frac{d-2\ell}{2\ell} =\frac{1}{2k}\frac{\prod_{\ell=1}^k(d+2-2-2\ell)}{\prod_{\ell=2}^k (2\ell-2)}=\frac{d_1}{2k} \label{Sum4k-1}\end{equation}
Hence we conclude that
\begin{align*}
  \widetilde{II}
=&\sum_{i=1}^k \overline{f(R)}R^{4k-1-2i}c_i\frac{2k-2i}{2i} \int_R^\infty f'(r)r^{2i}\,dr \times \frac{d_1}{4k-2i-1}\\
=& -\sum_{i'=2}^k d_{i'}d_1 \frac{\overline{f(R)}R^{4k+1-2i'}}{4k+1-2i'}\int_R^\infty f'(r)r^{2i'-2}\,dr  \end{align*}
Taking consideration of the symmetric term, we see that these are the terms involving one integral in (\ref{Missing}).

\textbf{Type III:} Finally, let us collect all  terms not containing any integral.
\begin{align*}
\widetilde{III}=&   \Big\langle- \sum_{i,j=1}^{k}\frac{c_ic_j (2k-2i)(2k-2j)}{(4k-2i-2j-1)}[|f(R)|^2\frac{R^{4k-1}}{2j(4k-1)}]\Big\rangle_{sym}\\
 &+ \sum_{i,j=1}^{k}\frac{c_ic_j (2k-2i)(2k-2j)}{4ij(4k-2i-2j-1)} |f(R)|^2R^{4k-1}
+ \sum_{i,j=1}^{k}c_ic_j |f(R)|^2\frac{R^{4k-1}}{4k-1}
  +4k^2 \frac{R^{4k-1}}{4k-1}|f(R)|^2
 \\
 &+ \Big\langle \sum_{i, j=1}^k c_ic_j (2k-2i)[|f(R)|^2\frac{R^{4k-1}}{2i(4k-1)} ]\Big\rangle_{sym}
  +  2k \Big\langle\sum_{i=1}^{k} [c_i |f(R)|^2R^{4k-1}\frac{2k-2i}{2i(4k-1)} ]\Big\rangle_{sym}\\
 & +
 \Big\langle\sum_{i=1}^{k} c_i  \frac{2k}{4k-1}|f(R)|^2R^{4k-1}\Big\rangle_{sym}
\end{align*}
We fully expand the symmetric term and obtain
\begin{align*}
\widetilde{III}=&  {|f(R)|^2R^{4k-1}}  \sum_{i,j=1}^{k}c_ic_j \times \\
&[\frac{- (2k-2i)(2k-2j)}{(4k-1)(4k-2i-2j-1)}( \frac{1}{2i}+\frac{1}{2j}) +\frac{ (2k-2i)(2k-2j)}{4ij(4k-2i-2j-1)} +  \frac{1}{4k-1} +\frac{1}{4k-1}(\frac{2k-2i}{2i}+\frac{2k-2j}{2j})
]
\\
 & +|f(R)|^2R^{4k-1}\sum_{i=1}^{k}  c_i \frac{2k}{4k-1}[\frac{2k-2i}{2i}+1] +|f(R)|^2R^{4k-1}\sum_{j=1}^{k}  c_j \frac{2k}{4k-1}[\frac{2k-2j}{2j}+1]\\
&  +4k^2 \frac{R^{4k-1}}{4k-1}|f(R)|^2 \\
= &   {|f(R)|^2R^{4k-1}}  \sum_{i,j=1}^{k}c_ic_j  \frac{1}{4k-1}[\frac{ (2k-2i)(2k-2j)}{4ij} +1 +\frac{2k-2i}{2i}+\frac{2k-2j}{2j}]
\\
 & +|f(R)|^2R^{4k-1}\frac{4k^2}{4k-1}[\sum_{i=1}^{k}   \frac{c_i}{2i} +\sum_{j=1}^{k}   \frac{c_j}{2j}]   +4k^2 \frac{R^{4k-1}}{4k-1}|f(R)|^2 \\
 =& \frac{4k^2}{4k-1}|f(R)|^2R^{4k-1}[\sum_{i,j=1}^k \frac{c_i}{2i}\frac{c_j}{2j} +\sum_{i=1}^{k}   \frac{c_i}{2i} +\sum_{j=1}^{k}   \frac{c_j}{2j} +1 ]\\
 =& \frac{d_1d_1}{4k-1}|f(R)|^2R^{4k-1}
\end{align*}
We used the sum identity (\ref{Sum4k-1}) for the last step. Comparing this expression with (\ref{Missing}), this is exactly the first term there, hence we conclude that when $d=4k+1$
\[AS_d{f}=2^{d-1}\pi^{d}\|\mathrm{Proj}_{\widetilde{W}^{\perp}}f\|^{2}_{\dot{H}^1(r\geq R, r^{d-1}\,dr)}\]
Combing all the conclusions in Section~\ref{g-proof} for $(0,g)$ data and Section~\ref{f-proof} for $(f,0)$ data, we have completed our proof of Theorem~\ref{CoreThm}.

\vspace{.2in}
\textbf{Acknowledgement}. The authors would like to thank Thomas Duyckaerts for pointing out further special solutions of the wave equation  violating the channel of energy estimate in dimension seven, which suggested the general form of the ``forbidden subspace''.
We also want to thank
 Boris Ettinger for interesting discussions concerning the Cauchy matrix.

\bibliography{Channelbib}
\bibliographystyle{amsplain}

\medskip

\centerline{\scshape Carlos Kenig,  Baoping Liu, Wilhelm Schlag}
\medskip
{\footnotesize
 \centerline{Department of Mathematics, The University of Chicago}
\centerline{5734 South University Avenue, Chicago, IL 60615, U.S.A.}
\centerline{\email{cek@math.uchicago.edu, baoping@math.uchicago.edu, schlag@math.uchicago.edu}}
}
\bigskip
\centerline{\scshape  Andrew Lawrie}
\medskip
{\footnotesize
 \centerline{Department of Mathematics, University of California Berkeley}
\centerline{
859 Evans Hall
Berkeley, CA 94720}
\centerline{\email{alawrie@math.berkeley.edu}}
}

\end{document}